\documentclass[12pt,english]{smfart} 
\usepackage{amssymb} 
\usepackage{amsfonts} 
\usepackage{physics} 
\usepackage{esint} 
\usepackage{braket} 
\usepackage{enumitem}
\usepackage[toc,page]{appendix} 
\usepackage{tikz}
\usepackage{xcolor}
\usepackage{graphicx,epsfig,subfigure,psfrag}
\hoffset=-2cm \voffset=-1cm
\setlength{\textwidth}{16cm}
\setlength{\textheight}{22cm}
\setlength{\footskip}{1.5cm}

 \usepackage{listings}

  \usepackage[usenames,dvipsnames]{pstricks}
 \usepackage{pstricks-add}
 \usepackage{epsfig}

\newtheorem{theorem}{Theorem}[section]
\newtheorem{proposition}[theorem]{Proposition}

\newtheorem{corollary}[theorem]{Corollary}
\newtheorem{definition}[theorem]{Definition}
\theoremstyle{remark}
\newtheorem{remark}[theorem]{Remark}

\newcommand{\R}{\mathbb{R}}
\newcommand{\dist}{\rm{dist}}
\newcommand{\spt}{\mathrm{spt}}
\newcommand{\HH}{\mathcal{H}} 
\newcommand{\N}{\mathbb{N}} 

\newcommand{\Cc}{\mathcal{C}}


\newcommand{\Hm}{{\bf H}}
\newcommand{\Vect}{{\rm Vect}}
 
\setlength{\parskip}{0.2cm}

\author[]{Jean-Fran\c{c}ois Grosjean, 
Antoine Lemenant
 and Rémy Mougenot}

\address{Universit\'e de Lorraine, CNRS, Institut Elie Cartan de Lorraine, BP 70239 54506 Vand\oe uvre-l\`es-Nancy Cedex, France}

\email{jean-francois.grosjean@univ-lorraine.fr, antoine.lemenant@univ-lorraine.fr, remy.mougenot@univ-lorraine.fr}

\date{\today}

\title{Reilly inequality for Varifolds}

\begin{document}

\begin{abstract} The famous Reilly inequality gives an upper bound for the first eigenvalue of the Laplacian defined on compact submanifolds of the Euclidean space in terms of the $L^2$-norm of the mean curvature vector. In this paper, we generalize this inequality in a Varifold context. In particular we generalize it for the class of $H(2)$ varifolds and for polygons and we analyse the equality case.  
\end{abstract}

\maketitle
\setcounter{tocdepth}{2}
\tableofcontents

 \newpage
\section{Introduction}

In this work, we extend the well-known Reilly inequality, originally established for smooth submanifolds of Euclidean space $\R^{n+1}$ (see \cite{Rei}), to the more general setting of varifolds. Varifolds provide a broad generalization of differentiable smooth submanifolds, of which they are a special case.

We begin by recalling the classical Reilly inequality. Let $M^m$ be a compact, connected submanifold of $\R^N$ of class $\Cc^2$, ensuring that the mean curvature is well-defined. The mean curvature vector $\Hm$ is given by the trace of the second fundamental form associated with the metric $g$ induced by the canonical metric of $\R^N$. Denote by $\lambda_1^M$ the first nonzero eigenvalue of the Laplace–Beltrami operator on $M$ with respect to $g$. Reilly's inequality (\cite{Rei}) states that
\begin{align*}
\lambda_1^M\leqslant\frac{1}{mv_M}\int_M|\Hm|^2dv 
\end{align*}
where $v_M$, $|\Hm|$, and $dv$ represent, respectively, the volume of $M$, the Euclidean norm of the mean curvature vector $\Hm$, and the volume element with respect to $g$. Furthermore, this inequality is sharp, as equality holds if and only if $M$ is a minimal submanifold of a sphere of radius $\sqrt{m/\lambda_1^M}$. In the particular case where $M$ is a hypersurface, i.e., $m=N-1$, equality holds if and only if $M$ is a geodesic sphere of radius $\sqrt{m/\lambda_1^M}$.

This inequality has been extended to submanifolds of other ambient spaces, such as the sphere $\mathbb{S}^{N}$ or the hyperbolic space $\mathbb{H}^{N}$, where $|\Hm|^2$ is replaced by $|\Hm|^2+c$, with $c=1$ for $\mathbb{S}^{N}$ and $c=-1$ for $\mathbb{H}^{N}$ (see \cite{ElsIli1,Hei}, and more recently \cite{NiuXu}, where the authors resolved an open problem posed by Heintze in 1988). In these ambient spaces, the equality case is analogous to that in the Euclidean setting.

Numerous studies have been conducted on this inequality. For instance, $\lambda_1^M$ has been shown to be bounded by other curvature quantities, such as the scalar curvature (\cite{AleDocRos,Gro1}). Additionally, Reilly's inequality has been extended to the first eigenvalue of other elliptic operators, including the Schrödinger operator, the $p$-Laplacian, and the Steklov eigenvalue (see, e.g., \cite{CheGui,CheWan,ElsIli2,Gro0,Gro2,IliMak,RotUpa}).

It is also worth noting that the stability of the equality case for hypersurfaces in Reilly’s inequality has recently been investigated in \cite{AubGro1} and \cite{AubGro2}. We now proceed to state Reilly's inequality in the more general framework of varifolds.


The first category of Varifolds    that we consider are the ones in the class $H(2)$: they are rectifiable $m$-varifolds  with locally bounded  variation such that  $\|\delta V\|$ is absolutely continuous with respect to $\|V\|$, with density ${\bf H}$  belonging to $L^2(\mathbb{R}^N,d\|V\|)$ (we refer to Section \ref{SectionVarifolds} for a more precise definition).

If $V\in H(2)$ then we define $\lambda_1(V)$ as follows:

 \begin{eqnarray} \lambda_1:=\inf_{u \in C^\infty(\mathbb{R}^N) \setminus \{0\} \text{ s.t. } \displaystyle\int  u \; d\|V\|=0} \dfrac{\displaystyle\int |\nabla_P u(x)|^2 \;d V(x,P)}{\displaystyle \int_{\mathbb{R}^N} u^2\;d\|V\|}.  \notag
 \end{eqnarray}

If $V$ is the varifold associated to a smooth surface then $\lambda_1$ coincides with the usual first eigenvalue of the Laplacian.

Our first main result if the following one.

\medskip

\medskip\begin{theorem}[Reilly inequality in $H(2)$] \label{reilly1} Let $K$ be a bounded rectifiable set and assume that $V=g\HH^m\mid_{K}\otimes\delta_{TK}\in H(2)$ is an $m$-Varifold with finite mass  $M:= V(\mathbb{R}^N\times G(N,m))$. Then
 \begin{eqnarray}
 \lambda_1 \leqslant \frac{\|{\bf H} \|^2_{L^2(d\|V\|)}}{mM}. \label{ineqR}
\end{eqnarray} 
 \end{theorem}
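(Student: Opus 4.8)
The plan is to adapt the classical proof of Reilly's inequality to the varifold setting, using the coordinate functions of $\mathbb{R}^N$ as test functions in the variational definition of $\lambda_1$ and invoking the first variation identity that characterizes the class $H(2)$. First I would normalize: the mass $M$, the generalized mean curvature $\mathbf{H}$, and the quantity $\lambda_1$ are all invariant when $V$ is replaced by a translate, so after translating I may assume that the center of mass lies at the origin, that is $\int x_i\, d\|V\| = 0$ for every $i \in \{1,\dots,N\}$. I also assume $M>0$, the inequality being trivial otherwise.

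Since $K$ is bounded, I would fix $\chi \in C_c^\infty(\mathbb{R}^N)$ with $\chi \equiv 1$ on a neighbourhood of the compact set $\spt\|V\|$ and set $u_i := \chi\, x_i \in C^\infty(\mathbb{R}^N)$. Then $\int u_i\, d\|V\| = \int x_i\, d\|V\| = 0$, so each $u_i$ is admissible in the infimum defining $\lambda_1$. On $\spt\|V\|$ one has $\nabla u_i = e_i$, hence $\nabla_P u_i = P e_i$ and $|\nabla_P u_i|^2 = \langle P e_i, e_i\rangle = P_{ii}$ for $V$-a.e.\ $(x,P)$, using that $P$ is the symmetric idempotent projection onto an $m$-plane. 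Therefore $\lambda_1 \int x_i^2\, d\|V\| \leqslant \int P_{ii}\, dV(x,P)$, and summing over $i$ with $\sum_i P_{ii} = \operatorname{tr}(P) = m$ gives
\[
\lambda_1 \int |x|^2\, d\|V\| \;\leqslant\; m M .
\]

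Next I would use that $V\in H(2)$. Applying the first variation identity to the compactly supported $C^1$ field $X(x) := \chi(x)\, x$, which equals $x$ on $\spt\|V\|$, gives $\int \operatorname{div}_P X\, dV(x,P) = -\int \langle \mathbf{H}(x), x\rangle\, d\|V\|(x)$. Since $\operatorname{div}_P X = \operatorname{tr}(P) = m$ on $\spt\|V\|$, the left-hand side equals $mM$, so by Cauchy--Schwarz
\[
m M \;=\; -\int \langle \mathbf{H}(x), x\rangle\, d\|V\|(x) \;\leqslant\; \Bigl(\int |x|^2\, d\|V\|\Bigr)^{1/2}\,\|\mathbf{H}\|_{L^2(d\|V\|)} .
\]
Squaring this and combining it with the previous display (multiplied by $\|\mathbf{H}\|_{L^2(d\|V\|)}^2 \geqslant 0$) yields $\lambda_1 (mM)^2 \leqslant mM\,\|\mathbf{H}\|_{L^2(d\|V\|)}^2$, and dividing by $(mM)^2>0$ gives exactly \eqref{ineqR}.

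The only step that genuinely uses the hypothesis is the first variation identity, which is precisely the content of membership in $H(2)$, so I do not expect a serious obstacle. The mild technical care needed is to pass the non-compactly-supported coordinate functions and position field through the cutoff $\chi$ (harmless since $\nabla\chi = 0$ and $\chi = 1$ on $\spt\|V\|$), to record the translation invariance of the three quantities in the statement, and to cancel $\int |x|^2\, d\|V\|$ at the end without dividing by it prematurely, which the ordering above arranges; equivalently the same chain gives $\lambda_1\, mM \leqslant \|\mathbf{H}\|_{L^2(d\|V\|)}^2$ directly.
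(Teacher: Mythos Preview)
Your proof is correct and follows essentially the same route as the paper: center so that $\int x_i\,d\|V\|=0$, use the coordinate functions as competitors in the Rayleigh quotient to obtain $\lambda_1\int|x|^2\,d\|V\|\leqslant mM$, then combine this with the first variation identity $mM=-\int\langle\mathbf{H},x\rangle\,d\|V\|$ and Cauchy--Schwarz. Your explicit cutoff $\chi$ to legitimize the use of the position field in the first variation, and your final algebraic arrangement that avoids dividing by $\int|x|^2\,d\|V\|$, are minor technical refinements of the paper's argument rather than a different approach.
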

 
In particular, Theorem \ref{reilly1} encompasses the standard Reilly inequality for smooth surfaces as a special case. However, the proof must be adapted to the varifold setting and does not follow exactly the same approach as in the smooth case.

Furthermore, as in the case of smooth surfaces, we deduce that for codimension 1 varifolds, equality holds if and only if the varifold is a multiple of a sphere. In higher codimensions, equality occurs for stationary varifolds in the sphere. The precise result is stated below.
 
\medskip\begin{theorem}\label{equalityH2} Let $K$ be a bounded rectifiable set and assume that $V=g\HH^m\mid_{K}\otimes\delta_{TK}$ is an $H(2)$-integral $m$-varifold with finite mass  $M= V(\mathbb{R}^N\times G(N,m))\not = 0$ which satisfies the equality case in the Reilly inequality, 
 \begin{eqnarray}
mM \lambda_1 = \|{\bf H} \|^2_{L^2(d\|V\|)}. \label{ineqReq}
\end{eqnarray}
Then up to translate at the origin ${\rm supp}(\|V\|) \subset \partial B\left(0,R\right)$ with $R=\sqrt{m/\lambda_1}$ and the varifold $V^S$ of the sphere $S_{R}=\partial B\left(0,R\right)$ defined by $V^S=g\HH^m\mid_{L}\otimes\delta_{TL}$ where $L={\rm supp}(\|V\|)$ is $H(2)$ and the mean curvature of $V^S$ in $S_R$ is vanishing. Moreover if $m=N-1$, then $V$ is the  $N-1$-dimensional sphere, with constant multiplicity. More precisely, up to translate at the origin  $V=n\mathcal{H}^{N-1}|_{S_R}\otimes\delta_{TS_R}$  and $n\in \mathbb{N}^*$. 
\end{theorem}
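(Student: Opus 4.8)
The plan is to revisit the proof of Theorem~\ref{reilly1}, record which inequalities are saturated under \eqref{ineqReq}, and then extract geometric information from the saturation by testing the first variation formula against well-chosen vector fields. First translate $V$ so that its barycenter $\int x\,d\|V\|$ is at the origin (the ``up to translate'' of the statement); each centered coordinate function $x_i$ is then admissible, so $\lambda_1\int x_i^2\,d\|V\|\leqslant\int|\nabla_P x_i|^2\,dV$, and summing over $i$ with $\sum_i|\nabla_P x_i|^2=\mathrm{tr}\,P=m$ gives $\lambda_1\int|x|^2\,d\|V\|\leqslant mM$. The first variation formula $\delta V(X)=-\int\langle{\bf H},X\rangle\,d\|V\|$ applied to $X(x)=x$ (legitimate after a cutoff, $\|V\|$ having compact support) gives $mM=\int\mathrm{div}_P X\,dV=-\int\langle{\bf H},x\rangle\,d\|V\|$, so by Cauchy--Schwarz $(mM)^2\leqslant\big(\int|x|^2\,d\|V\|\big)\|{\bf H}\|_{L^2(d\|V\|)}^2\leqslant\tfrac{mM}{\lambda_1}\|{\bf H}\|_{L^2(d\|V\|)}^2$, which is \eqref{ineqR} (note $\lambda_1>0$, since $mM>0$). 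Under \eqref{ineqReq} all these are equalities. Equality in Cauchy--Schwarz forces ${\bf H}=-\gamma x$ for a constant $\gamma\geqslant0$, and $\gamma>0$ since otherwise $mM=0$. Equality in the summed eigenvalue inequality, a sum of nonnegative terms, forces $\int|\nabla_P x_i|^2\,dV=\lambda_1\int x_i^2\,d\|V\|$ for each $i$; hence each centered $x_i$ realizes the infimum defining $\lambda_1$ and satisfies the corresponding Euler--Lagrange equation (whose multiplier for the constraint $\int\varphi\,d\|V\|=0$ vanishes because $M\neq0$), $\int\langle\nabla_P x_i,\nabla_P\varphi\rangle\,dV=\lambda_1\int x_i\varphi\,d\|V\|$ for all $\varphi\in C^\infty(\R^N)$. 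On the other hand the first variation formula with $X=\varphi e_i$ gives $\int\langle\nabla_P x_i,\nabla_P\varphi\rangle\,dV=\int\mathrm{div}_P(\varphi e_i)\,dV=-\int\varphi\langle{\bf H},e_i\rangle\,d\|V\|=\gamma\int\varphi x_i\,d\|V\|$. Comparing, choosing $\varphi=x_i$ and summing over $i$ (with $\int|x|^2\,d\|V\|>0$) yields $\gamma=\lambda_1$, so ${\bf H}=-\lambda_1 x$ $\|V\|$-a.e.

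Next I would show the support lies on a sphere. Since $V$ is integral, Brakke's perpendicularity theorem gives ${\bf H}(x)\perp T_xV$ for $\|V\|$-a.e.\ $x$, hence $x\perp T_xV$, i.e.\ $P(x)=0$ a.e. Then $\mathrm{div}_P(|x|^2x)=m|x|^2+2\langle P(x),x\rangle=m|x|^2$, so the first variation formula applied to $X(x)=|x|^2x$ gives $m\int|x|^2\,d\|V\|=-\int|x|^2\langle{\bf H},x\rangle\,d\|V\|=\lambda_1\int|x|^4\,d\|V\|$. Combining with $\int|x|^2\,d\|V\|=mM/\lambda_1$ (from the equality case above) this saturates the Cauchy--Schwarz inequality $\big(\int|x|^2\,d\|V\|\big)^2\leqslant M\int|x|^4\,d\|V\|$, forcing $|x|^2$ to be $\|V\|$-a.e.\ equal to a constant, which must be $R^2:=m/\lambda_1$. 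Hence ${\rm supp}(\|V\|)\subset\partial B(0,R)=S_R$.

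For the remaining assertions: since ${\rm supp}(\|V\|)\subset S_R$ and $T_xL=T_xV$ for $\HH^m$-a.e.\ $x\in L$, the varifold $V^S$ coincides with $V$, and for every $X\in C^1(\R^N;\R^N)$ tangent to $S_R$ one has $\delta V(X)=\lambda_1\int\langle X(x),x\rangle\,d\|V\|=0$; as for such $X$ the tangential divergence along $T_xV$ computed in $\R^N$ and in $S_R$ coincide, this says $\delta_{S_R}V^S=0$, so $V^S$ is stationary in $S_R$ — equivalently, ${\bf H}_V$ equals the trace over $T_xV$ of the second fundamental form of $S_R$ (namely $-\tfrac{m}{R^2}x$), so the mean curvature of $V^S$ in $S_R$ vanishes — and in particular $V^S$ is $H(2)$ in $S_R$. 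If moreover $m=N-1$, then $T_xL=T_xS_R$ for $\HH^{N-1}$-a.e.\ $x\in L$, so setting $\theta^\ast:=g\,{\bf 1}_L$ on $S_R$, stationarity in $S_R$ reads $\int_{S_R}\theta^\ast\,\mathrm{div}_{S_R}X\,d\HH^{N-1}=0$ for all vector fields $X$ on $S_R$, i.e.\ $\theta^\ast$ has vanishing distributional gradient; since $S_R$ is connected, $\theta^\ast$ is a.e.\ a constant, necessarily a positive integer $n$ (positive as $M\neq0$, an integer as $V$ is integral), so $L=S_R$ up to an $\HH^{N-1}$-null set and $g\equiv n$, giving $V=n\,\HH^{N-1}\!\mid_{S_R}\otimes\,\delta_{TS_R}$ with $n\in\N^\ast$.

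I expect the main obstacle to be the step forcing $|x|\equiv R$: one must extract a pointwise identity from global integral identities, which works only by combining the orthogonality ${\bf H}\perp T_xV$ — exactly where integrality of $V$ is used, and which may fail for general $H(2)$ varifolds — with a second Cauchy--Schwarz applied to the test field $|x|^2x$, chosen so that $\mathrm{div}_P(|x|^2x)$ collapses to $m|x|^2$. A secondary difficulty is the codimension one conclusion, which needs the structure of stationary top-dimensional integral varifolds in a connected manifold; here it reduces to the elementary fact that an $L^1$ function with zero distributional gradient on a connected manifold is constant, but the reduction to a first variation identity on $S_R$ requires care with the Riemannian geometry of the sphere.
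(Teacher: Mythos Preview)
Your proof is correct and tracks the paper's argument closely: both identify ${\bf H}=-\lambda_1 x$ from the saturated Cauchy--Schwarz together with the Euler--Lagrange equation for the coordinate functions, invoke Brakke's perpendicularity to get $\pi_P(x)=0$, and then conclude ${\rm supp}(\|V\|)\subset S_R$; the treatment of stationarity in $S_R$ and of the codimension-one case is essentially identical (the paper's Appendix lemma is exactly your ``zero distributional gradient on a connected manifold implies constant'').

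The one genuine variation is the argument for constancy of $|x|$. The paper tests $\int\nabla_P|X|^2\cdot\nabla_P\varphi\,dV$ for arbitrary $\varphi\in C^\infty(\R^N)$: since $\nabla_P|X|^2=2\pi_P(X)=0$ this vanishes, while integration by parts gives $-2m\int\varphi\,d\|V\|+2\lambda_1\int\varphi|X|^2\,d\|V\|$, forcing $|X|^2\equiv m/\lambda_1$ directly. Your route instead plugs the vector field $|x|^2x$ into the first variation to compute $\int|x|^4\,d\|V\|$ and then invokes a second Cauchy--Schwarz $\big(\int|x|^2\big)^2\leqslant M\int|x|^4$. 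Both are valid and rest on the same ingredient ($\pi_P(x)=0$); the paper's version is marginally more direct in that it yields the pointwise identity without an auxiliary inequality, while yours has the mild advantage of staying entirely within first-variation identities. A small stylistic point: your single application of Cauchy--Schwarz in the Hilbert space $L^2(d\|V\|;\R^N)$ gives the constancy of $\gamma$ in ${\bf H}=-\gamma x$ for free, whereas the paper splits this into pointwise Cauchy--Schwarz plus H\"older.
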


In a second part of the paper we  focus on polygons, which are special cases of varifolds. In this class we obtain a Reilly type inequality but which  requires a new definition of spectrum,  as we will see in Section \ref{polygonSection}.  More precisely, for a polygon  $K_A=\bigcup_{i=1}^n [A_i,A_{i+1}]$ with vertices $A_i\in \R^N$ (where $A_{n+1}=A_1$)  we define:

 \begin{eqnarray}
 \lambda_1^A:=\inf_{u \in C^\infty(\mathbb{R}^N) \setminus \{0\} \text{ s.t. } \int_{K_A} u \; d \mathcal{H}^1=0} \frac{\int_{K_A} |\nabla_P u(x)|^2 \;d \mathcal{H}^1}{\sum_{i=1}^{n} u^2(A_i)}, \label{lambdaPintro}
 \end{eqnarray}
where $\nabla_P u$ is the tangential gradient of $u$, defined $\mathcal{H}^1$-a.e. on the 1-rectifiable set $K_A$. We also consider the rectifiable $1$-varifold $V_A$  associated to this polygon, which allows to define the mean curvator vector measure. This vector is zero on every edge, and is the Dirac measure $\Hm_i \delta_{A_i}$ on each vertex, where  (see Section \ref{polygon})
$$\Hm_i= \left(\frac{A_{i}-A_{i-1}}{|A_i-A_{i-1}|}+ \frac{A_{i}-A_{i+1}}{|A_i-A_{i+1}|}\right).$$
As a matter of fact, with does definitions we establish a nice Reilly inequality which is stated below.

\medskip
\medskip\begin{theorem}[Reilly inequality for polygons]\label{ReillyPolygon} Let  $V_A$ be the rectifiable $1$-varifold associated to a polygon $A=\{A_i\}_{1\leqslant i \leqslant k+1}$ in $\R^N$ as defined in the Section \ref{polygon}, supported on the set $K_A=\bigcup_{1\leqslant i\leqslant k}[A_i,A_{i+1}]$.  Then
 \begin{eqnarray}
 \mathcal{H}^1(K_A) \lambda_1^A \leqslant \|{\bf H} \|^2_{L^2(d \mu_A)}=\displaystyle\sum_{i=1}^{n} |\Hm_i|^2, \label{ineqRP}
\end{eqnarray} 
where $\Hm=\displaystyle\sum_{i=1}^{n}\Hm_i\delta_{A_i}$.
\end{theorem}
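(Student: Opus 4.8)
\medskip
\noindent\textbf{Proof proposal.} The plan is to transcribe the classical proof of Reilly's inequality to the polygonal setting: the affine coordinate functions of $\R^N$ will serve as test functions for $\lambda_1^A$, and the first variation identity of the varifold $V_A$ will replace the closed--manifold integration by parts. Throughout I write $n$ for the number of edges, with the cyclic convention $A_{n+1}=A_1$, $A_0=A_n$. Since the polygon is assumed genuine ($A_i\neq A_{i+1}$, which is already needed for $\Hm_i$ to be defined), we have $\HH^1(K_A)>0$; and if $\lambda_1^A=0$ then \eqref{ineqRP} is trivial, so we may also assume $\lambda_1^A>0$.

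The first step is to use the first variation identity of $V_A$ established in Section~\ref{polygon}: for every $X\in C^\infty(\R^N;\R^N)$,
\[
\int_{K_A}\mathrm{div}_P X\,d\HH^1=\sum_{i=1}^{n}\langle \Hm_i,X(A_i)\rangle .
\]
This is precisely the computation producing the vertex curvatures: on each edge $\mathrm{div}_P X=\frac{d}{ds}\langle\tau_i,X\rangle$ in arclength, where $\tau_i=(A_{i+1}-A_i)/|A_{i+1}-A_i|$, so $\int_{[A_i,A_{i+1}]}\mathrm{div}_P X\,d\HH^1=\langle\tau_i,X(A_{i+1})-X(A_i)\rangle$; summing over the edges of the closed polygon and regrouping the contributions at each vertex leaves $\tau_{i-1}-\tau_i=\Hm_i$ as the coefficient of $X(A_i)$, with no boundary term.

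Next I would set $c:=\HH^1(K_A)^{-1}\int_{K_A}x\,d\HH^1\in\R^N$ and test $\lambda_1^A$ against the affine functions $u_j(x):=x_j-c_j$, $j=1,\dots,N$. Each $u_j$ is smooth, not identically zero, and satisfies $\int_{K_A}u_j\,d\HH^1=0$ by the choice of $c$, so \eqref{lambdaPintro} gives $\lambda_1^A\sum_{i=1}^{n}u_j(A_i)^2\leqslant\int_{K_A}|\nabla_P u_j|^2\,d\HH^1$ for every $j$ (this also holds when $\sum_i u_j(A_i)^2=0$, since $\lambda_1^A\geqslant0$). Using $\nabla_P u_j=Pe_j$ a.e., where $P$ denotes the orthogonal projection onto the tangent line of the edge, we have $\sum_{j=1}^N|\nabla_P u_j|^2=\mathrm{tr}\,P=1$ and $\sum_{j=1}^N u_j(A_i)^2=|A_i-c|^2$, so summing the $N$ inequalities yields
\[
\lambda_1^A\sum_{i=1}^{n}|A_i-c|^2\leqslant\HH^1(K_A).
\]
Then I would apply the first variation identity to the field $X(x):=x-c$: since $\mathrm{div}_P X=\mathrm{tr}\,P=1$ a.e.\ on $K_A$, its left side equals $\HH^1(K_A)$, whence $\HH^1(K_A)=\sum_{i=1}^{n}\langle\Hm_i,A_i-c\rangle$, and two applications of the Cauchy--Schwarz inequality (in $\R^N$, then in $\ell^2$) give
\[
\HH^1(K_A)\leqslant\Big(\sum_{i=1}^{n}|\Hm_i|^2\Big)^{1/2}\Big(\sum_{i=1}^{n}|A_i-c|^2\Big)^{1/2}.
\]
Combining the last two displays — bounding $\sum_i|A_i-c|^2$ by $\HH^1(K_A)/\lambda_1^A$, inserting this into the Cauchy--Schwarz estimate, and squaring — gives $\HH^1(K_A)^2\leqslant\big(\sum_i|\Hm_i|^2\big)\HH^1(K_A)/\lambda_1^A$, that is $\HH^1(K_A)\lambda_1^A\leqslant\sum_i|\Hm_i|^2$, which is exactly \eqref{ineqRP}.

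I do not expect a genuine obstacle here: once the vertex first variation identity is in place, the argument is the discrete counterpart of the classical one, and it works cleanly precisely because both $\lambda_1^A$ and the curvature norm $\sum_i|\Hm_i|^2$ are quantities concentrated at the vertices. The only points needing care are the admissibility of the affine test functions — handled by recentring at the barycenter $c$ so that $\int_{K_A}u_j\,d\HH^1=0$ — and the harmless bookkeeping for the degenerate cases $\HH^1(K_A)=0$ or $\lambda_1^A=0$, where \eqref{ineqRP} holds trivially.
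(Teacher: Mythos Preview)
Your proof is correct and follows essentially the same scheme as the paper's: test $\lambda_1^A$ with the coordinate functions, sum, identify $\sum_j\int_{K_A}|\nabla_P u_j|^2\,d\HH^1=\HH^1(K_A)$, rewrite $\HH^1(K_A)$ via the first variation identity as $\sum_i\langle\Hm_i,A_i-c\rangle$, and finish with Cauchy--Schwarz. The only cosmetic difference is that you centre at the continuous barycenter $c=\HH^1(K_A)^{-1}\int_{K_A}x\,d\HH^1$ (matching the constraint in \eqref{lambdaPintro}), whereas the paper's proof translates so that $\int X\,d\mu_A=0$, i.e.\ centres at the discrete barycenter $\frac{1}{n}\sum_i A_i$ (matching Definition~\ref{deflambda1}); the paper is itself inconsistent between these two normalisations, but the argument goes through verbatim under either one.
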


Next, it is natural to look for the equality case in this context. What is somehow surprising  is that the equality case is not verified exclusively by the regular polygon. We indeed have found three other types of polygon that saturate the Reilly inequality, that are losanges,  ``trapezes" and the ``fake-regular" polygons. Even more unexpectedly, the trapeze and the fake-regular polygon are asymmetric.

\medskip\begin{theorem}\label{Equality-Case} Let $A=\{A_i\}_{1\leqslant i \leqslant n+1}$ be a polygon in $\R^N$ which realizes the equality in the Reilly inequality \eqref{ineqRP}. Then $A$ is a planar regular polygon, or a losange, or a trapeze (as in Definition \ref{definitionT}), or a fake-regular polygon (as defined in Section \ref{fakeregular}).
\end{theorem}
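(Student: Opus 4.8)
The plan is to trace through the proof of the Reilly inequality for polygons (Theorem \ref{ReillyPolygon}) and extract the rigidity forced by equality. Recall the standard scheme: one builds a test function from the coordinate functions $x^1,\dots,x^N$ (suitably translated so that $\int_{K_A} x^j\, d\mathcal{H}^1 = 0$ after subtracting the barycenter), plugs it into the Rayleigh quotient \eqref{lambdaPintro}, and sums over $j$. The numerator $\sum_j \int_{K_A} |\nabla_P x^j|^2\, d\mathcal{H}^1$ collapses to $\mathcal{H}^1(K_A)$ since the tangential gradients of the coordinate functions form an orthonormal-type frame on each edge. The denominator $\sum_j \sum_i (x^j(A_i))^2 = \sum_i |A_i|^2$ must be handled by an integration-by-parts / first-variation identity on the varifold $V_A$: testing the first variation formula against the position field $X(x)=x$ gives $\sum_i \langle \Hm_i, A_i\rangle = -\,\mathcal{H}^1(K_A)$ (up to sign and the multiplicity normalization), after which Cauchy–Schwarz on $\sum_i\langle \Hm_i,A_i\rangle$ against $\big(\sum_i|\Hm_i|^2\big)^{1/2}\big(\sum_i|A_i|^2\big)^{1/2}$ closes the inequality. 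Equality in \eqref{ineqRP} then forces \emph{three} separate saturations, and the whole proof consists in exploiting each one.

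First I would record the equality conditions. (i) Equality in the Cauchy–Schwarz step forces $\Hm_i = \mu\, A_i$ for all $i$, with a single constant $\mu<0$; combined with the identity $\sum_i\langle\Hm_i,A_i\rangle = -\mathcal{H}^1(K_A)$ and the value of $\lambda_1^A$ this pins down $\mu$ and shows all vertices lie on a common sphere $\partial B(0,R)$ — so after translating to the barycenter, $|A_i|=R$ for every $i$. (ii) Equality in the Rayleigh quotient forces each translated coordinate function $x^j$ to be an actual eigenfunction for $\lambda_1^A$ in the sense of the polygonal spectrum, i.e. the Euler–Lagrange equation associated to \eqref{lambdaPintro} must hold: on each edge $x^j$ is affine (automatic), and at each vertex the ``jump'' of the tangential derivative must equal $\lambda_1^A\, x^j(A_i)$ — concretely $\Hm_i \cdot e_j$ type relations, which is exactly consistent with (i). The real content is geometric: I would next translate the condition $\Hm_i = \mu A_i$ into an equation purely among the vertices. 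Writing $u_i = (A_i-A_{i-1})/|A_i-A_{i-1}|$ for the unit edge vectors, the condition reads
\begin{equation*}
u_i - u_{i+1} = \mu A_i \quad\text{for all } i,\qquad |A_i| = R.
\end{equation*}

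The heart of the proof is the classification of polygons satisfying this closed system together with $|A_i|=R$ and the closure relation $\sum_i(A_{i+1}-A_i)=0$. Here is where I expect the main obstacle: the system is not linear (the $u_i$ involve edge lengths $\ell_i = |A_i - A_{i-1}|$ nonlinearly), and one must show its only solutions are the four listed families. I would proceed by: (a) summing $u_i - u_{i+1} = \mu A_i$ telescopically and using $\sum A_i$-type relations to constrain the edge-length vector $(\ell_1,\dots,\ell_n)$; (b) taking inner products of $u_i - u_{i+1} = \mu A_i$ with $A_i$, with $A_{i+1}-A_{i-1}$, and with itself, to derive scalar recursions linking consecutive edge lengths and the angles $\theta_i$ at the vertices — one should get that all angles are equal, or that edge lengths alternate between two values, etc.; (c) a case split on the resulting discrete dynamical system: the ``all data equal'' branch gives the regular polygon; a ``two alternating values'' branch with $n=4$ gives the losange; another branch gives the trapeze of Definition \ref{definitionT}; and a more degenerate branch gives the fake-regular polygon of Section \ref{fakeregular}. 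Throughout I would also need to check planarity — from $\Hm_i = \mu A_i$ and $|A_i|=R$ one shows the affine hull of the $A_i$ is $2$-dimensional (e.g. the vectors $A_i$ span the same plane as the edges, and a moment/inertia argument rules out higher-dimensional configurations). The bookkeeping of the case analysis in step (c), and verifying that each surviving family genuinely saturates \eqref{ineqRP} (not just the necessary conditions), is the laborious part; the conceptual input is entirely in the equality analysis (i)–(ii) above.
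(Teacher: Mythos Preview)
Your extraction of the equality conditions is mostly right in spirit, but there is a concrete error that derails the classification. From $\Hm_i = \mu A_i$ with a single constant $\mu$ you conclude that all vertices lie on a common sphere $\partial B(0,R)$. This is false: since $|\Hm_i|^2 = 2(1+\cos\theta_i)$ depends on the interior angle $\theta_i$, the relation $|A_i| = |\Hm_i|/|\mu|$ shows that $|A_i|$ varies with $\theta_i$. Indeed, for the trapeze of Definition~\ref{definitionT} and for the fake-regular polygon of Section~\ref{fakeregular} the vertices sit on \emph{two} concentric circles, not one. So your system ``$u_i - u_{i+1} = \mu A_i$ together with $|A_i|=R$'' is overdetermined and would wrongly force the regular polygon as the only solution.

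What actually drives the classification in the paper is a different scalar identity. After reducing to the plane (your planarity sketch is fine), one fixes an edge $[A_i,A_{i+1}]$ and chooses the test direction $\nu = \big((A_{i+1}-A_i)/\ell_i\big)^\perp$. Pairing the eigenfunction relation $\lambda_1 A_k = \Hm_k$ against $\nu$ at $k=i$ and $k=i+1$, and using $\langle\nu, A_{i+1}\rangle = \langle\nu, A_i\rangle$, yields
\[
\left\langle \nu,\ \frac{A_i-A_{i-1}}{|A_i-A_{i-1}|}\right\rangle \;=\; \left\langle \nu,\ \frac{A_{i+1}-A_{i+2}}{|A_{i+1}-A_{i+2}|}\right\rangle.
\]
Geometrically this says the adjacent edge directions have the same component orthogonal to $[A_i,A_{i+1}]$, which forces the dichotomy $\theta_{i+1}\in\{\theta_i,\ \pi-\theta_i\}$ at every step. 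This is the structural constraint your ``scalar recursions'' would need to uncover, and it is not visible from inner products with $A_i$ or $A_{i+1}-A_{i-1}$ alone. The paper then rephrases everything in terms of the central angles $\alpha_i$ between consecutive position vectors $A_i,A_{i+1}$: the dichotomy restricts each $\alpha_i$ to the three values $\theta_1,\ \pi-\theta_1,\ \pi/2$, with a prescribed transition graph, and the closure condition $\sum_i\alpha_i = 2\pi$ leaves exactly the four families. Your outline (b)--(c) is too vague to reach this; the specific choice of the orthogonal direction $\nu$ is the missing idea.
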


Finally in a last section we consider the case of a star-shaped graphs, for which we also establish a Reilly inequality and analyse the equality case, as being stationary graphs in the sense of varifold theory (see Section \ref{graph}). 

\section{Varifolds}
\label{SectionVarifolds}
A $m$-varifold $V$  in $\mathbb{R}^N$ is a Radon measure on $\mathbb{R}^N\times G(N,m)$, where $G(N,m)$ is the Grassmanian, i.e. the set of all unoriented $m$-dimensional subspaces of $\mathbb{R}^N$.  We also denote by  $\|V\|$ the weight  of $V$, i.e. its canonical projection into $\mathbb{R}^N$ (see for instance \cite{Sim} or \cite{Ton} for a short review on Varifold theory). In other words $\|V\|$ is the Radon measure defined by 
$$\forall \varphi \in C^0_c(\mathbb{R}^N), \quad\|V\|(\varphi)=\int \varphi(x) \;dV(x,S).$$
In this paper the integral over $\mathbb{R}^N\times G(N,m)$ is simply denoted without any domain of integration. Let us denote by $\mathcal{X}(\R^N)$ the space of compactly supported smooth vector fields on $\R^{N}$. For any $\theta\in\mathcal{X}(\R^N)$ and $P \in G(N,m)$ we define the tangential divergence by
$${\rm div}_P(\theta)(x) = {\rm Trace}(\pi_P(D\theta))(x)=\sum_{i=1}^p(D_{e_i}\theta \cdot e_i)(x),$$
where $\pi_P$ is the orthogonal projection onto $P \in G(N,m)$, $(e_i)_{1\leqslant i\leqslant m}$ is an orthonormal basis of $P$, $D$ is the differential operator and $\cdot$ denotes the scalar product on $\R^N$. Note that for any vector field $X=\sum_{i=1}^NX^i\partial_i$ and $\theta=\sum_{i=1}^N\theta^i\partial_i$, we have $$D_X\theta(x)=\displaystyle\sum_{1\leqslant i,j\leqslant N}X^j\partial_j\theta^i\partial_i.$$
where $(\partial_i)_{1\leqslant i\leqslant N}$ is the canonical basis of $\R^N$. The operator $D$ is in fact the Levi-Civita connection of the Euclidean space $\R^N$. The first variation of $V$ is defined for any $\theta\in\mathcal{X}(\R^N)$ by
\begin{equation}\label{First-variation}
\delta V(\theta) = \int  {\rm div}_P( \theta)(x) \;d V(x,P).
\end{equation}
We say that $V$ has a locally bounded first variation  if 
$$\sup \{|(\delta V)(\theta)| \;  : \; \theta \in\mathcal{X}(\R^N)\; ,\; \spt(\theta)\subset U\; ,\; |\theta|\leqslant 1 \}$$
is finite on any bounded set $U$. If this happens then from the vectorial Riesz representation theorem there exists a Radon measure denoted by $\|\delta V\|$ and a  $\|\delta V\|$-measurable vector field $\nu$ on $\R^N$ with compact support such that for all $\theta\in\mathcal{X}(\R^N)$ we have
\begin{equation}\label{Riesz-representation}
\delta V(\theta)=\int_{\R^N}\theta\cdot\nu\; d\|\delta V\|
\end{equation}
and $|\nu(x)|=1$ for $\|\delta V\|$ a.e. $x\in\R^N$. 
Moreover $\|\delta V\|$ encodes the mean curvature of $V$. From the Radon-Nikodym decomposition of $\|\delta V\|$ there exists $f\in L^1_{loc}(\R^N,\R^N,\|V\|)$ such that 
$$\|\delta V\|=f\|V\|+\|\delta V\|_{sing}$$
where $\|\delta V\|_{sing}$ is the singular part. Defining $\Hm(x)=-f(x)\nu(x)$, we have 
$$\delta V(\theta)=-\int_{\R^N}\Hm\cdot\theta \; d\|V\|+\int_{\R^N}\nu\cdot\theta \; d\|\delta V\|_{sing}.$$
The vector field $\Hm$ is called the generalized mean curvature.  

Note that the definition of varifolds can be extended to an $N$-Riemannian manifold $(M,h)$. For any $x\in M$, let us denote $G(T_xM,m)$ the set of all $m$-dimensional linear subspace of $T_xM$ and let us condider $G_m(TM)$ the Grassmann $m$-plane bundle of the tangent bundle $TM$ of $M$. An $m$-dimensional varifold $V$ is a Radon measure over $G_m(TM)$. As in the Euclidean case, the weight of $V$ is defined by 
$$\forall \varphi \in \Cc^0_c(M), \quad \|V\|(\varphi)=\int_{G_m(TM)} \varphi(x) \;dV(x,S).$$
Let us denote by $\mathcal{X}(M)$ the space of compactly supported smooth vector fields on $M$. Then for any $\theta\in\mathcal{X}(M)$, $x\in M$ and $P\in G(T_xM,m)$, the tangential divergence of $\theta$ at $(x,P)$ is given by
$${\rm div}^M_P\theta(x)=\sum_{i=1}^ph(D^M_{e_i}\theta, e_i)(x),$$
where $(e_i)_{1\leqslant i\leqslant m}$ is an orthonormal basis of $P$ and $D^M$ is the Levi-Civita connection of $M$. The first variation  of $V$ is defined for any $\theta\in\mathcal{X}(M)$ by
$$\delta V(\theta) = \int  {\rm div}^M_P( \theta)(x) \;d V(x,P).$$
Similarly to the Euclidean case, we define varifolds with locally bounded variation as well as $\|\delta V\|$ and the generalized mean curvature $\Hm$ and the singular measure $\|\delta V\|_{sing}$.





\subsection{Classes of Varifolds}

A first particular class of  varifolds  are those with  $L^p$ curvature  as defined below. 

\medskip
\begin{definition}[The class $H(p)$] A varifold $V$ with locally bounded  variation belongs to the class $H(p)$ if $\|\delta V\|$ is absolutely continuous with respect to $\|V\|$ and its density ${\bf H}$  belongs to $L^p(\mathbb{R}^N,d\|V\|)$.
\end{definition}

If $V$ belongs to the class $H(p)$, then for any $\theta\in\mathcal{X}(\R^N)$, we have 
$$\delta V(\theta)=-\int_{\R^N}\Hm\cdot\theta \; d\|V\|.$$
Another important class of varifolds is the class of Rectifiable varifolds.

\medskip\begin{definition}[Rectifiable Varifold] An $m$-varifold $V$ is rectifiable if there exists an $m$-rectifiable set $K \subset \R^N$ and a $\mathcal{H}^m$ mesurable function $g:K \to (0,+\infty)$  such that $g\mathcal{H}^m\mid_K$ is a Radon measure and for all $\varphi \in C^0_c(\mathbb{R}^N\times G(N,m))$ we have
 $$\int \varphi \; dV = \int_{K} \varphi(x, T_xK) g(x) \;d\mathcal{H}^m(x) ,$$
 where $T_xK$ is the approximative tangent plane of $K$ at point $x\in K$. We will denote $V=g\mathcal{H}^m\mid_K\otimes\delta_{TK}$.
\end{definition}

\medskip\begin{definition}[Integral Varifold] An integral $m$-varifold $V$ is a rectifiable varifold with integer multiplicity, i.e. $g \in \N$.
\end{definition}

All these definitions are easily generalized in the case of varifolds of a Riemannian manifold. In the sequel we shall need the following result.

\medskip\begin{proposition}[\cite{Ton}, Theorem 1.18]\label{orthogonality} Let $V$ be an integral $m$-varifold in $H(2)$ supported on a rectifiable set $K$. Then for $\mathcal{H}^m-a.e.$ $x \in K$, the approximative tangent plane to $K$ at point $x$ is orthogonal to the generalized mean curvature ${\bf H}(x)$.
\end{proposition}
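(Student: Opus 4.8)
This is Brakke's orthogonality theorem; in the paper it is invoked from \cite{Ton}, but here is the plan I would follow. The conceptual reason is that a vector field tangent to the varifold generates, to first order, a reparametrisation of $V$, so it ought to be invisible to the first variation; making this precise should force $\int\Hm\cdot X\,d\|V\|=0$ for every such $X$, hence $\pi_{T_xK}\Hm(x)=0$ for $\HH^m$-a.e.\ $x$. To implement it I would argue pointwise. Since $V\in H(2)$ the generalized mean curvature lies in $L^1_{loc}(d\|V\|)$, the monotonicity formula holds, and one can discard a $\HH^m$-null subset of $K$ so that at every remaining point $x_0$: $g(x_0)=\Theta^m(\|V\|,x_0)\in\N^*$; the approximate tangent plane $P:=T_{x_0}K$ exists and the dilated varifolds $V_{x_0,r}$, defined by $V_{x_0,r}(\varphi)=r^{-m}\int\varphi((x-x_0)/r,S)\,dV(x,S)$, converge as $r\to0$ to $C:=g(x_0)\,\HH^m\mid_P\otimes\delta_P$; $x_0$ is a $\|V\|$-Lebesgue point of $\Hm$; and the tilt- and height-excesses of $V$ at $x_0$ tend to $0$, the latter even like $o(r^2)$, as follows from the finiteness of $\int\frac{|\pi_{(T_xK)^\perp}(x-x_0)|^2}{|x-x_0|^{m+2}}\,d\|V\|$ built into the monotonicity formula. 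Fix such an $x_0$; after a translation and rotation, take $x_0=0$ and $P=\R^m\times\{0\}$.

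Then I would test the first variation with a tangential cut-off: for a unit vector $e\in P$ and $\varphi\in C^\infty_c(B(0,1))$, inserting $\theta_r(x)=\varphi(x/r)\,e$ into $\delta V(\theta)=-\int\Hm\cdot\theta\,d\|V\|$ and using the scaling $\delta V_{0,r}(\theta)=r^{1-m}\delta V(\theta(\cdot/r))$ gives
\[
\int\nabla\varphi(\xi)\cdot\pi_S(e)\,dV_{0,r}(\xi,S)\;=\;-\,r\Big[r^{-m}\!\int\varphi(x/r)\,\big(\Hm(x)\cdot e\big)\,d\|V\|(x)\Big].
\]
By the Lebesgue-point property and the convergence $\|V_{0,r}\|\rightharpoonup g(0)\,\HH^m\mid_P$, the bracket on the right tends to $g(0)\,(\Hm(0)\cdot e)\int_P\varphi\,d\HH^m$. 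On the left, writing $\pi_S(e)=e+(\pi_S-\pi_P)(e)$: the first piece integrates $\partial_e\varphi$ against $\|V_{0,r}\|$ and tends to $g(0)\int_P\partial_e\varphi\,d\HH^m=0$ by the divergence theorem on the flat plane $P$, while the second piece is controlled by the tilt-excess and also tends to $0$ because the tangent varifold $C$ is flat. Thus both sides tend to $0$, and the whole matter is whether the left-hand side is in fact $o(r)$.

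That rate estimate is the main obstacle, and it is where integrality is indispensable — the conclusion genuinely fails for rectifiable varifolds of non-integer density (e.g.\ a line with smoothly varying weight has tangential generalized mean curvature) — and where $\Hm\in L^2$ enters, through the monotonicity formula and a Caccioppoli-type tilt-excess estimate. I would obtain it from the structure of integral varifolds near $x_0$: using an approximate multi-sheeted graphical description of $V$ over $P$, the term $\int\nabla\varphi\cdot(\pi_S-\pi_P)(e)\,dV_{0,r}$ is, up to higher-order excess contributions, linear in the gradients of the graph functions, and an integration by parts in the $e$-direction trades those gradients for the graph functions themselves, whose $L^1$-smallness (a consequence of the height-excess being $o(r^2)$) makes the term $o(r)$; the flatness defect $\int\partial_e\varphi\,d\|V_{0,r}\|$ is handled the same way, and in fact its leading $O(r)$ part cancels that of the tilt term. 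Granting $o(r)$, dividing the displayed identity by $r$ and letting $r\to0$ yields $g(0)\,(\Hm(0)\cdot e)\int_P\varphi\,d\HH^m=0$ for every $\varphi$; since $g(0)>0$ this forces $\Hm(0)\cdot e=0$, and letting $e$ run through a basis of $P=T_{x_0}K$ gives $\pi_{T_{x_0}K}\Hm(x_0)=0$, the desired orthogonality.
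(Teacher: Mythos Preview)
The paper does not prove this proposition; it is quoted as Theorem~1.18 of \cite{Ton} and used as a black box in the proof of Theorem~\ref{equalityH2}. There is therefore no proof in the paper to compare your proposal against.

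Your outline is the standard route to Brakke's orthogonality theorem: reduce to a generic blow-up point, test the first variation with a tangential cut-off $\theta_r(x)=\varphi(x/r)\,e$ with $e\in T_{x_0}K$, rescale, and show that the resulting identity forces $\Hm(x_0)\cdot e=0$ once the tilt and height contributions are shown to be $o(r)$. You correctly isolate that $o(r)$ estimate as the crux, and your counterexample (a line with smoothly varying density) correctly pinpoints why integrality is indispensable. What remains genuinely open in your sketch is precisely that rate estimate: the Lipschitz multi-sheeted approximation of $V$ over $P$ and the integration by parts that trades tilt for height are the technical core of Brakke's argument (Chapter~5 of his book; see also Menne's later refinements), and the claimed cancellation of the leading $O(r)$ parts needs to be made quantitative rather than asserted. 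As a proof plan your proposal is faithful to the literature; as a self-contained proof it stops short of the hard step.
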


\subsection{Example of varifold associated to a sphere}\label{polygon1}

Let $\partial B(0,R)$ be a sphere of radius $R>0$ and let $V$ be the varifold defined for $\varphi \in C_c^{0}(\R^N \times G(N,m))$ by 
$$V(\varphi)=\int_{\partial B(0,R)} \varphi(x,P(x)) \; d\mathcal{H}^{N-1}(x),$$
where $P(x)$ is the tangent plane to $\partial B(0,R)$ at point $x$. Then for all $\theta\in C^{1}(\R^N,\R^N)$,
$$\delta V(\theta)=\int_{\partial B(0,R)} {\rm div}_{P(x)}(  \theta)(x) \; d\mathcal{H}^{N-1}(x)=\int_{\partial B(0,R)} {\bf H}\cdot  \theta  \; d\mathcal{H}^{N-1}$$
with ${\bf H}= \nu (N-1)/R$ where $\nu$ is the outwise normal to the sphere.

\subsection{Example of varifold associated to a polygon}\label{polygon}We start by giving a precise definition of a polygon and its associated varifold.
 
\medskip\begin{definition} \label{defPolygon} For $n\geqslant 3$ we say that  $A=\{A_i\}_{1\leqslant i \leqslant n+1}$ is  a polygon with vertices $A_i \in \R^N$ if  $A_{n+1}=A_1$ with no self-intersection points, i.e. we assume that  the open  segment $]A_i,A_{i+1}[$ does not intersect any another segment $]A_j,A_{j+1}[$, and we also assume that each vertex $A_i$ belongs to the two (and only two) consecutive segments $[A_{i-1},A_i]$ and $[A_i,A_{i+1}]$. We then define the rectifiable set $K_A$ defined by the union of the segment joining the vertices $A_i$, namely
 $$K_A:=\bigcup_{1\leqslant i \leqslant n}[A_i , A_{i+1}],$$
and the associated rectifiable varifold $V_A$ with constant multiplicity one  defined through 
 $$\int \varphi \;dV_{A}:=\int_{K} \varphi(x,P(x)) \;d\mathcal{H}^1(x).$$
Here $P(x)$ is the approximate tangent plane thus equals $\textnormal{Vect}\{A_{i+1}-A_{i}\}$ on the segment $[A_i,A_{i+1}]$. 
\end{definition}

\medskip\begin{proposition}\label{propPoly}Let $A=\{A_i\}_{1\leqslant i \leqslant n+1}$ be a polygon as  in Definition \ref{defPolygon}. Then  $V_A$ has bounded variation and 
\begin{eqnarray}
\delta V_A = \sum_{i=1}^n \left(\frac{A_{i}-A_{i-1}}{|A_i-A_{i-1}|}+ \frac{A_{i}-A_{i+1}}{|A_i-A_{i+1}|}\right)\delta_{A_i}. \label{varPolygone}
\end{eqnarray}
Here, $\delta_{A_k}$ is the Dirac measure at point $A_k$, and the vector in factor is pointing in the exact middle direction of the angle formed by the two sides of the polygon, with norm equal to $\sqrt{2(1+\cos(\theta_i))}$ where $\theta_i$ is the angle between the two vectors $\frac{A_{i}-A_{i-1}}{|A_i-A_{i-1}|}$ and $\frac{A_{i}-A_{i+1}}{|A_i-A_{i+1}|}$.
\end{proposition}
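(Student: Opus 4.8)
The plan is to compute the first variation $\delta V_A(\theta)$ directly from the definition \eqref{First-variation}, exploiting that $K_A$ is a finite union of straight segments on each of which the tangential divergence degenerates to an ordinary derivative along the segment. First I would record the measure-theoretic preliminaries: the vertex set $\{A_i\}$ is $\HH^1$-null, and the no-self-intersection hypothesis in Definition \ref{defPolygon} guarantees that the edges $[A_i,A_{i+1}]$ overlap pairwise only in vertices, so that $\HH^1\!\mid_{K_A}$ is exactly the sum of arclength measures on the edges, with no overcounting. Since moreover the approximate tangent plane is $P(x)=\Vect\{A_{i+1}-A_i\}$ at $\HH^1$-a.e.\ point of $[A_i,A_{i+1}]$, one gets for every $\theta\in\mathcal{X}(\R^N)$
\begin{equation*}
\delta V_A(\theta)=\int {\rm div}_P(\theta)(x)\,dV_A(x,P)=\sum_{i=1}^n\int_{[A_i,A_{i+1}]}{\rm div}_{P(x)}(\theta)(x)\,d\HH^1(x).
\end{equation*}

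Next I would treat a single edge. Setting $L_i:=|A_{i+1}-A_i|$, $\tau_i:=(A_{i+1}-A_i)/L_i$ and parametrizing the $i$-th edge by $x(t)=A_i+t\tau_i$, $t\in[0,L_i]$, the plane $P(x)$ is spanned by the single unit vector $\tau_i$, hence ${\rm div}_{P(x)}(\theta)(x)=D_{\tau_i}\theta(x)\cdot\tau_i=\frac{d}{dt}\big(\theta(x(t))\cdot\tau_i\big)$; since $\theta$ is smooth, the fundamental theorem of calculus yields
\begin{equation*}
\int_{[A_i,A_{i+1}]}{\rm div}_{P(x)}(\theta)(x)\,d\HH^1(x)=\theta(A_{i+1})\cdot\tau_i-\theta(A_i)\cdot\tau_i.
\end{equation*}
Summing over $i$ and regrouping by vertex — $A_j$ is the upper endpoint of $[A_{j-1},A_j]$ (contributing $+\theta(A_j)\cdot\tau_{j-1}$) and the lower endpoint of $[A_j,A_{j+1}]$ (contributing $-\theta(A_j)\cdot\tau_j$), indices read mod $n$ with $A_0=A_n$, $A_{n+1}=A_1$ — gives
\begin{equation*}
\delta V_A(\theta)=\sum_{j=1}^n\theta(A_j)\cdot(\tau_{j-1}-\tau_j)=\sum_{j=1}^n\theta(A_j)\cdot\left(\frac{A_j-A_{j-1}}{|A_j-A_{j-1}|}+\frac{A_j-A_{j+1}}{|A_j-A_{j+1}|}\right),
\end{equation*}
which is exactly \eqref{varPolygone}. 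As this is a finite linear combination of point evaluations of $\theta$, the functional $\delta V_A$ is represented by a finite sum of vector-valued Dirac masses; in particular $\sup\{|\delta V_A(\theta)|:|\theta|\leqslant1\}\leqslant\sum_j|\Hm_j|<\infty$, so $V_A$ has bounded first variation.

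Finally, for the geometric interpretation of the coefficient $\Hm_j$ at $A_j$: it is the sum of the two unit vectors $u:=\frac{A_j-A_{j-1}}{|A_j-A_{j-1}|}$ and $w:=\frac{A_j-A_{j+1}}{|A_j-A_{j+1}|}$, which based at $A_j$ point outward along the two reversed edges. The sum of two unit vectors lies along the bisector of the angle they form (it is the diagonal of the rhombus they span), and $|u+w|^2=|u|^2+|w|^2+2\,u\cdot w=2(1+\cos\theta_j)$, where $\theta_j$ is the angle between $u$ and $w$, which gives the stated direction and the norm $\sqrt{2(1+\cos\theta_j)}$. I do not expect any genuine obstacle here; the only points deserving care are the measure-theoretic reduction to a sum over edges (where the no-self-intersection assumption and the identification of the a.e.\ tangent plane are used) and the regularity of $\theta$ needed to apply the fundamental theorem of calculus on each edge.
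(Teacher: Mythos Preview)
Your proof is correct and follows essentially the same approach as the paper: compute the tangential divergence on each edge via the arclength (or linear) parametrization, apply the fundamental theorem of calculus to obtain endpoint contributions, and regroup by vertex to exhibit $\delta V_A$ as a sum of vector-valued Dirac masses. The only cosmetic differences are your choice of arclength parametrization (the paper uses $T_i(t)=(1-t)A_i+tA_{i+1}$ on $[0,1]$) and your slightly more explicit treatment of the measure-theoretic reduction to a sum over edges and of the bounded-variation claim.
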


\begin{proof} Let $ \nu_i := (A_{i+1}-A_i)\vert A_{i+1}-A_i\vert^{-1}$ and $P_i=\Vect\{ A_{i+1}-A_i\}$. Then for $\theta\in C^1_c(\R^N,\R^N)$,  parameterizing the segment $[A_i,A_{i+1}]$ by $T_i(t) := (1-t)A_i + tA_{i+1}$, we see that for any $t\in(0,1)$
\begin{align*}
{\rm div}_{P_i}(\theta)(T_i(t))&=(D_{\nu_i}\theta\cdot\nu_i)(T_i(t))=\sum_{k=1}^N(D_{\nu_i}(\theta^k\partial_k)\cdot\nu_i)(T_i(t))\\
&=\sum_{k=1}^N(\nu_i(\theta^k)\nu_i^k)(T_i(t))=\frac{1}{|A_{i+1}-A_i|}\sum_{k=1}^N\frac{d}{dt}(\theta^k\circ T_i)(t)\nu_i^k
\end{align*}
where $(\partial_k)_{1\leqslant k\leqslant N}$ denotes the canonical basis of $\R^N$ and $(\theta^k)_{1\leqslant k\leqslant N}$ and $(\nu_i^k)_{1\leqslant k\leqslant N}$ are respectively the coordinates of $\theta$ and $\nu_i$ in this basis. Now we see that the first variation of $V_A$ is given by
\begin{align*}
\delta V_A(\theta)&:=\sum_{i=1}^n\int_{0}^1{\rm div}_{P_i}(\theta)(T_i(t))|T_i'(t)|dt\\
&=\sum_{i=1}^n\sum_{k=1}^N\int_{0}^1\frac{d}{dt}(\theta^k\circ T_i)(t)\nu_i^kdt\\
&=\sum_{i=1}^n(\nu_i\cdot(\theta(A_{i+1}) - \theta(A_i)))\\
&=\sum_{i=1}^n\theta(A_i)\cdot(\nu_{i-1}- \nu_i)
\end{align*}
with $\nu_n=\nu_0$ which proves \eqref{varPolygone}.
Let us now compute the norm of $\nu_{i-1}-\nu_i$ which is of the form $u+v$ with $|u|=|v|=1$. Then
$|u+v|^2=|u|^2+|v|^2-2\langle u,v\rangle=2+2\langle u,v\rangle=2(1+\cos(\theta)),$
where $\theta$ is the oriented angle between the two vectors $u,v$. 
\end{proof}

\subsection{General identities on varifolds} We state a integration by parts formula and adapt the Hsiung-Minkowski equality.

 \medskip\begin{proposition}[Integration by parts]\label{pIPP} Let $V$ be a $m$-varifold with locally bounded first variation. For every $u,v \in C^\infty(\mathbb{R}^N)$,
\begin{eqnarray}
 \int  \nabla_P u \cdot \nabla_P v \;dV = -\int  v \; {\rm div}_P(\nabla u) \; dV +\int_{\R^N}v(\nu\cdot\nabla u)d\|\delta V\| \label{IPP}
 \end{eqnarray} 
where for any $f\in C^\infty(\mathbb{R}^N)$, $\nabla_P f=\pi_P(\nabla f)$ for any $P\in G(N,m)$.  
 \end{proposition}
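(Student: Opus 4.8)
The plan is to apply the first variation \eqref{First-variation} to the test vector field $\theta = v\,\nabla u$, which is smooth since $u,v\in C^\infty(\R^N)$. In the cases of interest in this paper $\spt\|V\|$ is bounded, so $\theta$ may be multiplied by a cutoff $\chi\in C^\infty_c(\R^N)$ equal to $1$ on a neighbourhood of $\spt\|V\|\cup\spt\|\delta V\|$ without changing any of the integrals below; I will therefore treat $\theta=v\,\nabla u$ as an admissible element of $\mathcal X(\R^N)$ and keep the cutoff implicit (in a genuinely non-compact situation one runs $\chi_R\uparrow 1$ and checks that the extra terms vanish, which holds as soon as the integrals appearing in the statement converge).

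The first step is the pointwise product rule for the tangential divergence. Fix $P\in G(N,m)$ and an orthonormal basis $(e_i)_{1\le i\le m}$ of $P$. From $D_{e_i}(v\,\nabla u) = (e_i\cdot\nabla v)\,\nabla u + v\,D_{e_i}(\nabla u)$, taking the scalar product with $e_i$ and summing over $i$ gives
\[
{\rm div}_P(v\,\nabla u) \;=\; v\,{\rm div}_P(\nabla u) \;+\; \sum_{i=1}^m (e_i\cdot\nabla v)(e_i\cdot\nabla u) \;=\; v\,{\rm div}_P(\nabla u) + \pi_P(\nabla v)\cdot\nabla u .
\]
Since $\pi_P$ is an orthogonal projection it is self-adjoint and idempotent, so $\pi_P(\nabla v)\cdot\nabla u = \nabla v\cdot\pi_P(\nabla u) = \pi_P(\nabla v)\cdot\pi_P(\nabla u) = \nabla_P v\cdot\nabla_P u$. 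Hence ${\rm div}_P(v\,\nabla u) = v\,{\rm div}_P(\nabla u) + \nabla_P u\cdot\nabla_P v$ holds pointwise (and in particular $V$-a.e.).

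Integrating this identity over $\R^N\times G(N,m)$ against $V$ and then using \eqref{First-variation} together with the Riesz representation \eqref{Riesz-representation} yields
\[
\int \nabla_P u\cdot\nabla_P v\;dV + \int v\,{\rm div}_P(\nabla u)\;dV \;=\; \delta V(v\,\nabla u) \;=\; \int_{\R^N} v\,(\nu\cdot\nabla u)\;d\|\delta V\| ,
\]
and rearranging gives \eqref{IPP}. There is no genuine analytic difficulty here: the only points requiring attention are the admissibility of the test field (handled by the cutoff/compact-support remark above) and the correct use of the self-adjointness of $\pi_P$ to rewrite $\pi_P(\nabla v)\cdot\nabla u$ symmetrically as $\nabla_P u\cdot\nabla_P v$.
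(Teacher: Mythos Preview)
Your proof is correct and follows essentially the same route as the paper: both compute the pointwise product rule ${\rm div}_P(v\,\nabla u)=\nabla_P u\cdot\nabla_P v+v\,{\rm div}_P(\nabla u)$ in an orthonormal basis of $P$, integrate against $V$, and invoke \eqref{First-variation}--\eqref{Riesz-representation} with $\theta=v\,\nabla u$. Your treatment is in fact slightly more careful than the paper's, since you make the compact-support cutoff explicit (the paper applies $\delta V$ to $v\,\nabla u$ directly without commenting on this point).
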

 
\begin{proof} Let $P\in G(N,m)$ and $(e_i)_{1\leqslant i\leqslant m}$ be an orthonormal basis of $P$, then
\begin{align}\label{derive}
{\rm div}_P(v\nabla u)&=\sum_{i=1}^m(D_{e_i}(v\nabla u))\cdot e_i\notag\\
&=\sum_{i=1}^m e_i(v)(\nabla u\cdot e_i)+v{\rm div}_P\nabla u\notag\\
&=\nabla_P u \cdot \nabla_P v+v{\rm div}_P\nabla u
\end{align}
On the other hand from (\ref{First-variation}) and (\ref{Riesz-representation})
\begin{equation}\label{HH}
\int{\rm div}_P(v\nabla u)(x)  \;dV(x,P)=\int_{\R^N}v(\nu\cdot\nabla u)d\|\delta V\|
\end{equation}
Therefore, integrating \eqref{derive} with respect to $V$ and using \eqref{HH}, yields \eqref{IPP} which finishes the proof of the proposition.
\end{proof} 

\medskip\begin{proposition}[Hsiung-Minkowski type equalities]  Let $V$ be an $m$-varifold with locally bounded first variation and $X$ be the identity vector field. Then 
\begin{eqnarray}
 \int |\nabla_P X|^2 dV =  \int {\rm div}_P(X) \;d V =m M,  \label{eq1}
  \end{eqnarray}
 where   $M$ is the total mass of the Varifold defined as
$$M:= V(\mathbb{R}^N\times G(N,m)).$$
\end{proposition}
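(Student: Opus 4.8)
The plan is straightforward: both integrands in \eqref{eq1} are pointwise equal to the constant $m$, and integrating the constant $m$ against $V$ produces $mM$ by the very definition of $M$. So the whole proof reduces to two elementary pointwise computations.

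First, since $X$ is the identity vector field we have $X=\sum_{i=1}^N x^i\partial_i$ and $D_vX=v$ for every $v\in\R^N$; that is, $DX$ is the identity map of $\R^N$. Fixing $P\in G(N,m)$ and an orthonormal basis $(e_i)_{1\leqslant i\leqslant m}$ of $P$, this gives
$${\rm div}_P(X)(x)=\sum_{i=1}^m D_{e_i}X\cdot e_i=\sum_{i=1}^m e_i\cdot e_i=m,$$
independently of $x$ and of $P$. Integrating over $\R^N\times G(N,m)$ with respect to $V$ yields $\int {\rm div}_P(X)\,dV=m\,V(\R^N\times G(N,m))=mM$.

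Second, writing $X^j(x)=x^j$ for the coordinate functions we have $\nabla X^j=\partial_j$, hence $\nabla_P X^j=\pi_P(\partial_j)$, and therefore
$$|\nabla_P X|^2=\sum_{j=1}^N|\pi_P(\partial_j)|^2=\sum_{j=1}^N\sum_{i=1}^m(\partial_j\cdot e_i)^2=\sum_{i=1}^m\sum_{j=1}^N(\partial_j\cdot e_i)^2=\sum_{i=1}^m|e_i|^2=m,$$
using Parseval's identity in the canonical orthonormal basis $(\partial_j)_{1\leqslant j\leqslant N}$ together with the fact that $\pi_P$ is the orthogonal projection onto $P$. Equivalently, $\nabla_P X$ is the restriction to $P$ of $\pi_P\circ DX=\pi_P$, i.e. the identity map of $P$, whose Hilbert--Schmidt norm squared equals $\dim P=m$. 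Integrating once more gives $\int|\nabla_P X|^2\,dV=mM$, which establishes \eqref{eq1}.

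I do not expect any genuine difficulty in this argument; the only point that deserves a word of care is the meaning of $|\nabla_P X|^2$ when $X$ is a vector field rather than a scalar function, but the two natural conventions — sum over coordinate functions, or Hilbert--Schmidt norm of the projected differential — agree and both produce the value $m$, so the identity is insensitive to this choice. Note also that the hypothesis of locally bounded first variation plays no role in \eqref{eq1} itself; it is only relevant when this identity is combined with the first-variation machinery later on.
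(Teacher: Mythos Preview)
Your proof is correct. Both integrands are indeed pointwise equal to $m$, and your direct computations verify this cleanly.

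The paper takes a slightly different route for the Dirichlet part. It computes ${\rm div}_P(X)=m$ pointwise (arriving, via coordinates, at the same double sum $\sum_{i,j}(e_i\cdot\partial_j)^2$ that you obtain for $|\nabla_P X|^2$), but for $\sum_j\int|\nabla_P X_j|^2\,dV$ it invokes the integration-by-parts formula \eqref{IPP} with $u=v=X_j$: since $\nabla X_j=\partial_j$ is constant, ${\rm div}_P(\nabla X_j)=0$, so $\int|\nabla_P X_j|^2\,dV=\int X_j\nu_j\,d\|\delta V\|$; summing over $j$ and using \eqref{First-variation}--\eqref{Riesz-representation} recovers $\int{\rm div}_P(X)\,dV=mM$. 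This detour genuinely uses the locally bounded first variation hypothesis, whereas your argument does not --- and you rightly flag that the hypothesis is superfluous for \eqref{eq1} itself. Your approach is the more economical one; the paper's route has the minor side benefit of displaying the intermediate identity $\sum_j\int|\nabla_P X_j|^2\,dV=\delta V(X)$, but this is in any case immediate from \eqref{First-variation} once one knows the pointwise equality of the two integrands.
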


\begin{proof} We first  write for any $P\in G(N,m)$ and orthonormal basis $(e_i)_{1\leqslant i\leqslant m}$ of $P$
\begin{align*} 
{\rm div}_P(X)&=\sum_{i=1}^m(D_{e_i}X)\cdot e_i=\sum_{i=1}^m\sum_{j=1}^N(D_{e_i}(X_j\partial_j))\cdot e_i\notag\\
&=\sum_{i=1}^m\sum_{j=1}^Ne_i(X_j)(\partial_j\cdot e_i)=\sum_{i=1}^m\sum_{j=1}^N (e_i\cdot\partial_j)^2=m
\end{align*}
where $(\partial_j)_{1\leqslant j\leqslant N}$ is the canonical basis of $\R^N$. Then
\begin{equation}\label{part1}
\int {\rm div}_P(X)(x) \;d V(x,P)=mM.
\end{equation} 
Moreover we observe that $\nabla X_i=\partial_i$ so that $D_{e_j}(\nabla X_i)=D_{e_j}\partial_i=0$ which implies 
$$\int  X_i \; {\rm div}_P(\nabla X_i) \; dV=0.$$
Therefore, if  we apply \eqref{IPP} with $u=v=X_i$ we get 
 \begin{eqnarray}
 \int  |\nabla_P X_i |^2dV &=&-\int  X_i{\rm div}_P(\nabla X_i)dV+\int_{\R^N}X_i(\nu\cdot\nabla X_i)d\|\delta V\| \notag \\
 &=&\int_{\R^N}X_i\nu_id\|\delta V\|,\notag 
 \end{eqnarray}
and summing over $i$ then using \eqref{First-variation}, \eqref{Riesz-representation} and \eqref{part1} yields,  
\begin{eqnarray}
\sum_{i=1}^N \int |\nabla_P X_i |^2  \;dV =\int_{\R^N}X\cdot\nu d\|\delta V\|= \int {\rm div}_P(X) \;d V=mM , \notag
 \end{eqnarray}
which proves the proposition.
\end{proof}


%
%
  
 \section{Reilly inequality for $H(2)$ varifolds} 
 
 \medskip\begin{definition}[Definition of $\lambda_1$]  Let  $V$ be an $m$-varifold in the class $H(2)$. We define the quantity
 \begin{eqnarray} \lambda_1:=\inf_{u \in C^\infty(\mathbb{R}^N) \setminus \{0\} \text{ s.t. } \displaystyle\int  u \; d\|V\|=0} \dfrac{\displaystyle\int |\nabla_P u(x)|^2 \;d V(x,P)}{\displaystyle \int_{\mathbb{R}^N} u^2\;d\|V\|}. \label{definitionL}\end{eqnarray}
 \end{definition}
 
If $V$ is the varifold associated to a smooth surface then $\lambda_1$ coincides with the usual first eigenvalue of the Laplacian. On the other hand, in the general context of $H(2)$ varifolds it is not clear  how to  rely  $\lambda_1$ with the first eigenvalue of an abstract unbounded operator. 

 \medskip\begin{proposition}[First order conditions in the class $H(2)$] \label{firstorder} Assume that $u \in C^\infty(\R^N)$ realizes the infimum in \eqref{definitionL}. Then for every $\varphi \in \Cc^{\infty}(\R^N)$ satisfying $\int_{\R^N} \varphi \; d||V||=0$ it holds
 \begin{eqnarray}
\int_{\R^N} \nabla_P u(x) \cdot \nabla_P \varphi(x) \;d V(x,P) = \lambda_1 \int_{\R^N } u \varphi \;d||V||. \label{cond1}
 \end{eqnarray}
 \end{proposition}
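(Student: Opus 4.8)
The plan is to run the classical Lagrange-multiplier / first-variation argument, carefully tracking the two linear constraints that define the admissible class. Write $\mu = \|V\|$ and, for $u \in C^\infty(\R^N)$, set $Q(u) = \int |\nabla_P u|^2\, dV$ and $N(u) = \int u^2\, d\mu$, so that $\lambda_1$ is the infimum of $Q(u)/N(u)$ over the hyperplane $\mathcal{A} = \{u : \int u\, d\mu = 0\}$. Suppose $u$ realizes the infimum; since $u \neq 0$ and (being in $\mathcal{A}$) has mean zero with respect to $\mu$, we have $N(u) > 0$ (the total mass $M$ being nonzero in the situations of interest; more precisely $u$ cannot be $\mu$-a.e. constant, hence $N(u)>0$), so the quotient is differentiable near $u$ along admissible directions.

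Next I would perturb. Fix any $\varphi \in C^\infty(\R^N)$ with $\int \varphi\, d\mu = 0$, so that $u + t\varphi \in \mathcal{A}$ for all $t \in \R$. The function
\[
t \longmapsto \frac{Q(u+t\varphi)}{N(u+t\varphi)}
\]
is smooth in a neighborhood of $t = 0$ (both numerator and denominator are polynomials in $t$, and the denominator is positive at $t=0$), attains its minimum over $\mathcal{A}$ at $t = 0$, hence has vanishing derivative there. Computing the derivative using bilinearity of $Q$ and $N$ — note $\frac{d}{dt}Q(u+t\varphi)|_{t=0} = 2\int \nabla_P u \cdot \nabla_P \varphi\, dV$ and $\frac{d}{dt}N(u+t\varphi)|_{t=0} = 2\int u\varphi\, d\mu$ — the quotient rule gives
\[
\frac{2\left(\int \nabla_P u \cdot \nabla_P \varphi\, dV\right) N(u) - Q(u)\, 2\left(\int u\varphi\, d\mu\right)}{N(u)^2} = 0.
\]
Dividing by $2$ and using $Q(u)/N(u) = \lambda_1$, this rearranges to exactly \eqref{cond1}:
\[
\int \nabla_P u \cdot \nabla_P \varphi\, dV = \lambda_1 \int u\varphi\, d\mu.
\]

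The one point that needs a little care — and the only genuine obstacle — is the handling of the constraint $\int \varphi\, d\mu = 0$: the identity \eqref{cond1} is only asserted for such $\varphi$, and indeed it would be false in general without this restriction (testing against $\varphi \equiv 1$ would force $\lambda_1 \int u\, d\mu = 0$, which is automatic, but more general $\varphi$ with nonzero mean would pick up a multiple of the constraint). Because we only ever perturb within the affine space $\mathcal{A}$, which is a genuine linear subspace (it contains $0$ and is closed under the operations used), no Lagrange-multiplier term appears and the statement follows directly. If one wished to extend \eqref{cond1} to all $\varphi \in C^\infty$, one would write a general $\varphi$ as $(\varphi - c) + c$ with $c = \frac{1}{\mu(\R^N)}\int \varphi\, d\mu$ (here using $M \neq 0$) and absorb the constant; but for the proposition as stated this is unnecessary. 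All integrals are finite because $V$ has finite mass and $u, \varphi$ are smooth, so there are no convergence issues to address.
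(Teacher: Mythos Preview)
Your proof is correct and follows essentially the same first-variation argument as the paper: perturb $u$ by $t\varphi$ within the admissible hyperplane and use minimality at $t=0$. The only cosmetic difference is that you differentiate the Rayleigh quotient directly via the quotient rule, whereas the paper cross-multiplies, divides by $t$ from both sides, and passes to the limit to obtain the two inequalities separately.
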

 
\begin{proof}  For $t\in \R$ we consider the admissible function    $u_t:=u+t\varphi$ as a test function, with $\varphi \in C^\infty_c(\R^N)$ satisfying $\int_{\R^N} \varphi \; d||V||=0$.   Since $u$ achieves the minimum of the Rayleigh quotient  we deduce that for all $t\in \R$, 
$$  \lambda_1=\frac{\int |\nabla_P u(x)|^2 \;d V_A(x,P)}{\int_{\mathbb{R}^N} u^2\;d||V||} \leqslant   \frac{\int |\nabla_P u_t(x)|^2 \;d V_A(x,P)}{\int_{\mathbb{R}^N} u_t^2\;d||V||}, $$
from which we easily obtain \eqref{cond1} by a standard variational argument. Indeed, let us introduce the real numbers
\begin{align*}
    D(u)&:=\int |\nabla_P u(x)|^2 \;d V_A(x,P),   &D(\varphi)&:=\int |\nabla_P \varphi(x)|^2 \;d V_A(x,P),\\ B(u,\varphi)&:=\int \nabla_P u(x) \cdot \nabla_P \varphi(x) \;d V_A(x,P), &E(u)&:= \int_{\mathbb{R}^N} u^2\;d||V||,\\ E(\varphi) &:=\int_{\mathbb{R}^N} \varphi^2\;d||V||, &C(u,\varphi)&:=\int u(x)\varphi(x) \;d||V||.
\end{align*}
Then for all $t\in \R$ we have
$$\lambda_1=\frac{D(u)}{E(u)}\leqslant \frac{D(u)+2tB(u,\varphi)+t^2 D(\varphi)}{E(u) +2 t C(u,\varphi) + t^2 E(\varphi)},$$
or differently,
$$\lambda_1(E(u) +2 t C(u,\varphi) + t^2 E(\varphi))\leqslant D(u)+2tB(u,\varphi)+t^2 D(\varphi).$$
Since $\lambda_1E(u)=D(u)$ we can actually simplify as follows 
$$\lambda_1(2 t C(u,\varphi) + t^2 E(\varphi))\leqslant 2tB(u,\varphi)+t^2 D(\varphi).$$
Dividing by $2t>0$ and letting $t\to 0$ we get 
$$\lambda_1 C(u,\varphi) \leqslant B(u,\varphi).$$
Similarly, dividing by $2t<0$ and letting $t\to 0$ we get the opposite 
$$\lambda_1 C(u,\varphi) \geqslant B(u,\varphi),$$
thus in conclusion we have obtained 
$$\lambda_1 C(u,\varphi)  =B(u,\varphi),$$
and this achieves the proof.
\end{proof}


\begin{proof}[Proof of Proposition \ref{reilly1}]
Since  $\|V\|$ has finite mass,    up to use a translation we can assume that
$$ \int_{\mathbb{R}^N}X \; d \|V\|=0.$$
 Then we take $X_i$ as a test function in the definition of $\lambda_1$ which yields,
 \begin{eqnarray}
 \lambda_1 \int_{\mathbb{R}^N} (X_i)^2 \;d \|V\| \leqslant \int |\nabla_P X_i|^2 \; dV(X,P) \label{lambda1X}
 \end{eqnarray}
 which after summation becomes
 \begin{eqnarray}
 \lambda_1 \int_{\mathbb{R}^N} |X|^2  \;d \|V\| \leqslant \sum_{i=1}^{N}\int  |\nabla_P X_i|^2 \; dV(X,P). \label{ineq11}
 \end{eqnarray}
 Then we can use Cauchy-Schwarz (for vectors of $\mathbb{R}^N$), H\"older and  Hsiung-Minkowski \eqref{eq1} to obtain
\begin{eqnarray}
 \lambda_1 \int_{\mathbb{R}^N} |X|^2  \;d \|V\| &\leqslant& \sum_{i=1}^{N}\int |\nabla_P X_i|^2 \; dV(x,P)=  \int_{\mathbb{R}^N} {\rm div}_P(X)dV \label{ligne1}\\
 &=& \frac{\left( \int_{\mathbb{R}^N} {\rm div}_P(X)dV\right)^2}{mM} = \frac{\left(\int_{\mathbb{R}^N} X \cdot {\bf H}\;d\| V\| \right)^2}{mM}  \notag \\
 &\leqslant &  \frac{ \left( \int_{\mathbb{R}^N} |X | |\Hm | \;d\|V\|  \right)^2}{mM} \leqslant   \frac{\|X\|_{L^2}^2 \|{\bf H}\|_{L^2}^2}{mM} \notag
 \end{eqnarray}
 which proves \eqref{ineqR}.
  \end{proof}


\subsection{Equality case in the class $H(2)$}

%

\

\

\begin{proof}[Proof of Theorem \ref{equalityH2}] If there is equality in  \eqref{ineqR}, then all the inequalities that have been used in the proof of the Reilly inequality \eqref{ineqR}, are equalities. In particular the equality in Cauchy-Schwarz inequality \eqref{ineq11} says that there exists a constant $\alpha \in \R$ such that
 \begin{eqnarray}
  H=\alpha X, \quad  ||V||-a.e. \label{colinear1}
 \end{eqnarray} 
 This implies that $\pi_P(X)=0$  a.e  thanks to Proposition \ref{orthogonality}. Let us now prove that  $\alpha=\lambda_1$. Indeed, we know that the inequality in  \eqref{lambda1X} is an equality, thus for all $1\leqslant k \leqslant N$, the coordinate function $X_k$ realizes the infimum in the Rayleigh quotient. Then using the optimality condition \eqref{cond0},  it follows that  for every $\varphi \in C^{\infty}(\R^N)$  satisfying $\int_{\R^N} \varphi \; d||V||=0$ and for every $1\leqslant k \leqslant N$ it holds
 \begin{eqnarray}
\int_{\R^N} \nabla_P X_k(x) \cdot \nabla_P \varphi(x) \;d V(x,P) = \lambda_1 \int_{\R^N } X_k \varphi \;d||V||. \label{cond0}
 \end{eqnarray}
 Then we integrate by parts using  \eqref{IPP} yielding
 \begin{eqnarray}
 \int_{\R^N} \nabla_P X_k(x) \cdot \nabla_P \varphi(x) \;d V(x,P) &=& -\int  \varphi {\rm div}_P(\nabla X_k) \; dV + \int_{\mathbb{R}^N} \varphi   \nabla X_k \cdot \Hm \; d\|  V\| \notag \\
 &=&  \int_{\mathbb{R}^N} \varphi\,  \Hm_k \; d\|  V\| ,
 \end{eqnarray}
because $\nabla X_k=\partial_k$ and ${\rm div}_P(\nabla X_k)=0$. We deduce that for all $\varphi \in C^{\infty}(\R^N)$  satisfying $\int_{\R^N} \varphi \; d||V||=0$ and for every $1\leqslant k \leqslant N$ it holds
$$\int_{\mathbb{R}^N} \varphi\,  \Hm_k d\|  V\| =\lambda_1 \int_{\R^N } X_k \varphi \;d||V||,$$
 and since $\Hm_k=\alpha X_k$,  we must actually have $\alpha=\lambda_1$. Our next goal is to prove that $|X|$ is constant on the support of $||V||$, which would prove that the varifold is supported on a sphere.  To see this, we compute in two different ways the quantity $ \int  \nabla_P |X|^2 \cdot \nabla_P \varphi \;dV$ for  $\varphi \in C^\infty(\R^N)$, a test function. Using first that $\nabla |X|^2=2X$  and that $\pi_P(X)=0$ we get
$$ \int  \nabla_P |X|^2 \cdot \nabla_P \varphi \;dV=\int 2(\pi_PX )\cdot \nabla_P \varphi \;dV=0.$$
Then using the integration by parts formula \eqref{IPP} with the $C^\infty$ functions $|X|^2$ and $\varphi$ we obtain
 \begin{eqnarray}
 0= \int  \nabla_P |X|^2 \cdot \nabla_P \varphi \;dV &=& -\int  \varphi \; {\rm div}_P(\nabla |X|^2) \; dV + \int_{\mathbb{R}^N} \varphi\,  \nabla |X|^2 \cdot \Hm d\|  V\| \notag \\
 &=& -\int  \varphi \; {\rm div}_P(2X) \; dV + \int_{\mathbb{R}^N} \varphi\, 2X \cdot \Hm d\|  V\| \notag \\
 &=& -\int_{\R^N}  \varphi \; 2m \; d\|V\| + 2\lambda_1 \int_{\mathbb{R}^N} \varphi  |X|^2\,  d\|  V\|,
 \end{eqnarray} 
where we have used that $\Hm=\lambda_1 X$. In conclusion we have obtained, 
$$\forall \varphi\in C^{\infty}(\R^N), \quad 2m\int_{\R^N}  \varphi  \; d\|V\| = 2\lambda_1 \int_{\mathbb{R}^N} \varphi  |X|^2\,  d\|  V\|.$$
Since $\|V\|$ is not trivial, by taking a particular $\varphi$ it is easy to see  that $\lambda_1 \not =0$ and the above identity says that $|X|$ is constant and 
$$|X|=\sqrt{\frac{m}{\lambda_1}} \quad \|V\|-a.e.$$ In other words denoting $R:=\sqrt{m/\lambda_1}$ we   have obtained that ${\rm supp}(\|V\|) \subset \partial B(0,R)$ which proves the first half of the statement. Let $L={\rm supp}(\|V\|)$ and consider $V^S=g\HH^m\mid_{L}\otimes\delta_{TL}$. Let $\theta\in\mathcal{X}(S_R)$. Then $\theta$ can be extended in a smooth field $\tilde{\theta}$ of $\R^N$ and at $x\in S_R$, we have for any $Y\in T_xS_R$ :
\begin{align*}
D^{S_R}_Y\theta&=\pi_{T_xS_R}(D_Y\tilde{\theta})=D_Y\tilde{\theta}-\frac{1}{R^2}((D_Y\tilde{\theta})\cdot x)x\\
&=D_Y\tilde{\theta}+\frac{1}{R^2}(\theta\cdot D_Yx) =D_Y\tilde{\theta}+\frac{1}{R^2}(\theta\cdot Y)x.
\end{align*}
Then for a.e. $x\in L$ and orthonormal basis $(e_i)_{1\leqslant i\leqslant m}$ of $T_xL$, we have 
\begin{align*}
({\rm div}^{S_R}_{TL}\theta)(x)&=\sum_{i=1}^m(D^{S_R}_{e_i}\theta\cdot e_i)\\
&=\sum_{i=1}^m(D_{e_i}\tilde{\theta}\cdot e_i)+\frac{1}{R^2}\sum_{i=1}^m(e_i\cdot\tilde{\theta})(x\cdot e_i)\\
&=({\rm div}_{TL}\tilde{\theta})(x).
\end{align*} 
It follows that
\begin{align*}
\int{\rm div}^{S_R}_P(\theta)(x)dV^S(x,P)&=\int_{L}g(x)({\rm div}^{S_R}_{T_xL}\theta)(x) \;d\mathcal{H}^m(x)\\
&=\int_{K}g(x)({\rm div}_{T_xK}\tilde{\theta})(x) \;d\mathcal{H}^m(x)\\
&=\int{\rm div}_P(\tilde{\theta})dV(\cdot,P)\\
&=-\int\theta\cdot\Hm d\|V\|=0.
\end{align*}
Then denoting by $\Hm^S$ the generalized mean curvature vector of $V^S$, we have for all $\theta\in\mathcal{X}(S_R)$ 
$$\int{\rm div}^{S_R}_P(\theta)(x)dV^S(x,P)=-\int_{S_R}\theta\cdot\Hm^Sd\|V^S\|+\int_{S_R}\theta\cdot\eta d\|\delta V^S\|_{sing}=0$$
where $\eta$ is a $\|\delta V^S\|$ measurable vector field on $S_R$. Since $d\|V^S\|$ and $d(\delta V^S)_{sing}$ are mutually singular, they are both zero. Consequently, $V^S$ is $H(2)$ and $\Hm^S=0$. 

To finish the proof in the case $m=N-1$, we show  that the support of the varifold is the whole sphere $S_R$ and the multiplicity $g$ of  $V$ is constant on the whole sphere. Then for any smooth vector field $\Phi$ with compact support in $\R^N$, we have :
$$\int_K g{\rm div}_{T_xS_R}(\Phi)d\mathcal H^{N-1}=\int_L g\Hm\cdot\Phi d\mathcal H^{N-1}=\frac{m}{R^2}\int_{S_R}{\bf 1}_L g X\cdot\Phi d\mathcal H^{N-1}$$
Moreover if we consider an orthonormal basis $(e_i)_{1\leqslant i\leqslant N-1}$ on $T_xS_R$ :
\begin{align*}
{\rm div}_{T_xS_R}(\Phi)&={\rm div}_{T_xS_R}(\Phi^T)+\frac{1}{R^2}{\rm div}_{T_xS_R}((\Phi\cdot X)X)\\
&={\rm div}_{T_xS_R}(\Phi^T)+\frac{1}{R^2}\sum_{i=1}^{N-1}D_{e_i}((\Phi\cdot X)X)\cdot e_i\\
&={\rm div}_{T_xS_R}(\Phi^T)+\frac{1}{R^2}\sum_{i=1}^{N-1}e_i(\Phi\cdot X)(X\cdot e_i)+\frac{m}{R^2}(\Phi\cdot X)\\
&={\rm div}_{T_xS_R}(\Phi^T)+\frac{m}{R^2}(X\cdot\nabla^S(\Phi\cdot X))+\frac{m}{R^2}(\Phi\cdot X)
\end{align*}
where $\nabla^S(\Phi\cdot X)$ is the gradient on $S_R$. Then $(X\cdot\nabla^S(\Phi\cdot X))=0$ and we deduce that :
$$\int_{S_R}{\bf 1}_L g{\rm div}_{T_xS_R}(\Phi^T)d\mathcal H^{N-1}=0$$
Since it is true for any smooth vector field $\Phi$ with compact support in $\R^N$ (see Proposition~\ref{f-constant} in the appendix), it follows that ${\bf 1}_L g$ is constant and $L=B(0,R)$ and $g$ is constant.
\end{proof}

\begin{remark} It is easy to check that  the varifold   associated to $n\mathcal{H}^{N-1}|_{S_R}$ with $n \in \N^*$ satisfies the equality case in Reilly's inequality. To see this we know that, denoting $m=N-1$, the mean curvature  $\bf H$ of the sphere of radius $R$ is equal to $ \nu m/R$, where $\nu$ is the outwise normal vector, in other words
$$\int_{S_R}  {\rm div}_P(\nabla \varphi) \;d\mathcal{H}^{N-1} = \int_{S_R} \frac{m}{R} \varphi \cdot \nu \; d\mathcal H^{N-1} .$$

Multiplying by $n$ on each sides we see that  the the mean curvature $\bf H$ associated to the varifold $n\mathcal{H}^{N-1}|_{S_R}$ of multiplicity $n$ is still $ \nu m/R$. Using $R=\sqrt{m/\lambda_1}$ we deduce that 
$$\|{\bf H}\|_{L^2(d\|V\|)}^2 = \int_{S_R} \frac{m^2}{R^2} nd \mathcal{H}^{N-1}=\frac{nm^2}{R^2} \mathcal{H}^{N-1}(S_R)=\lambda_1 mM,$$
because 
$$M=\|V\|(S_R)=n \mathcal{H}^{N-1}(S_R).$$
\end{remark}
   
 \section{Reilly inequality for polygons}
\label{polygonSection}
 \subsection{The space $H^1(K_A)$}
 
 In this section  we define a Sobolev space $H^1(K_A)$ when $K_A$ is a polygon. For simplicity we will  sometimes denote by $K$ the polygon, instead of $K_A$.  In what follows we denote by $L^2(K)$ the usual complete space  $L^2(K,d\mathcal{H}^1)$ containing measurable functions $u$ such that $\int_{K} u^2 \, d\mathcal{H}^1<+\infty$, and defined $\mathcal{H}^1$-a.e.\! on $K$. Following a general approach introduced in \cite{zombie}, we describe below a how to define a Dirichlet energy associated to the 1-rectifiable set $K$. Since $K$ is $1$-rectifiable we know the existence of  an approximative tangent line at $\mathcal{H}^1$-a.e.\! point $x\in K$, and  for any such point $x$ we can choose a unit tangent vector $\tau_K(x)$ in direction of that line. For every smooth function $u \in C^\infty(\R^N)$ we introduce 
$$N(u)=\left(\int_{K} u^2(y) \; d\mathcal{H}^1(y) + \int_{K} |\nabla u \cdot \tau_K(y)|^2 \; d\mathcal{H}^1(y)\right)^\frac{1}{2}.$$
Notice that $N(u)$ involves only the trace on $K$ of the smooth function $u$ defined on the whole $\R^N$. Then we consider the space $\mathcal{D}(K)$ as being the restriction on $K$ of  $C^\infty(\R^N)$ functions, and we endow this space with the norm  $N$. In particular,  $\mathcal{D}(K)$  is a subspace of $L^2(K)$.  Finally, we define $H^1(K)$ as follows. 

\medskip\begin{definition}[Space $H^1(K)$ and $\nabla_K u$] For a polygon $K$, we define $H^1(K)$ as the completion of $\mathcal{D}(K)$ for the norm $N$. In particular, $H^1(K)$ is a closed subspace of $L^2(K)$ for which $\mathcal{D}(K)$ is a dense subset. For any $u \in H^1(K)$ we define $\nabla_K u \in L^2(K;\R^N)$ as the $L^2$~limit of the projection (a.e.) of $\nabla u_n$ on $K$, that is, $(\nabla u_n\cdot \tau_K)\tau_K$, for $u_n \to u$ in the norm $N$. In particular $\nabla_K u$ does not depend on the choice of the sequence $u_n \in \mathcal{D}(K)$  such that $u_n \to u$ in $H^1(K)$, in the equivalent class of Cauchy sequences for the norm $N$, which justifies the definition.
\end{definition}

The construction of $H^1(K)$ is rather standard. A way to define it rigorously  is for instance by considering all Cauchy sequences for $N$ in $\mathcal{D}(K)$, on which one defines the following equivalence relation: two Cauchy sequences $u_n$ and $v_n$ are equivalent if and only if $N(u_n-v_n)\to 0$. Then $H^1(K)$ is the quotient of all Cauchy sequences by this relation. It is easy to see  that this space is a complete space for which $\mathcal{D}(K)$ is a dense subset. 

  In particular, we can consider  $u \in H^1(K)$ as being a function $u\in L^2(K)$ for which there exists a sequence $u_n \in C^\infty(\R^N)$ such that $u_n\vert_K \to u$ in $L^2$ and   $(\nabla u_n \cdot \tau_K)\tau_K$ has a limit in $L^2(K;\R^N)$. For $u \in H^1(K)$ we will denote by $\nabla_K u$ the  limit  of $(\nabla u_n \cdot \tau_K)\tau_K$. By construction,  the limit of $(\nabla u_n \cdot \tau_K)\tau_K$ does not depend on the choice of the sequence $u_n$, chosen in the equivalent class of Cauchy sequences, and $\nabla_K u$ is therefore well defined. Next, our Dirichlet energy, defined on $H^1(K)$, is given by
$$\int_{K} |\nabla_K u|^2 \;d\mathcal{H}^1.$$
Of course if $u\in  C^\infty(\R^N)$, $\nabla_K u = (\nabla u \cdot \tau_K)\tau_K$ $\mathcal{H}^1$-a.e., thus the Dirichlet energy coincides with the natural one in that case.


Now we would like to establish an analogue of Sobolev embedding and Rellich theorem within our context.

\medskip\begin{proposition}\label{embb} If $K$ is a polygon, then for all $u\in H^1(K)$,  
\begin{eqnarray}
|u(x)-u(y)|\leqslant {\dist}_K(x,y)^\frac{1}{2} \|\nabla_K u\|_{L^2(K)}, \quad \quad \text{ for } \mathcal{H}^1-\text{a.e. } x,y \in K, \label{inequS} 
\end{eqnarray}
where ${\dist}_K(x,y)$ is the geodesic distance on $K$.  
\end{proposition}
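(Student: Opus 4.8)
The plan is to first prove \eqref{inequS} for smooth functions $u\in C^\infty(\R^N)$ --- in fact for \emph{every} pair $x,y\in K$, not merely almost every one --- and then to pass to the limit using the density of $\mathcal{D}(K)$ in $H^1(K)$.

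\textbf{Step 1 (smooth case).} Fix $x,y\in K$. Since $K$ is a compact, connected finite union of segments (topologically a circle, by Definition \ref{defPolygon}), the geodesic distance $\dist_K$ is a genuine finite metric on $K$ and there is an arc-length parametrized Lipschitz path $\gamma:[0,\ell]\to K$ with $\gamma(0)=y$, $\gamma(\ell)=x$, $|\gamma'|=1$ a.e. and $\ell=\dist_K(x,y)$; any length-minimizing such path may be taken injective, since otherwise one could excise a loop and strictly shorten it. Because $\gamma$ runs along the edges of $K$ at unit speed, for a.e.\ $t$ (all $t$ outside the finitely many instants at which $\gamma$ crosses a vertex) the derivative $\gamma'(t)$ is a unit vector parallel to the edge containing $\gamma(t)$, hence $\gamma'(t)=\pm\tau_K(\gamma(t))$. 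Applying the fundamental theorem of calculus to the absolutely continuous function $t\mapsto u(\gamma(t))$ gives
\[
u(x)-u(y)=\int_0^\ell \nabla u(\gamma(t))\cdot\gamma'(t)\,dt ,
\]
and since $|\nabla u(\gamma(t))\cdot\gamma'(t)|=|\nabla u(\gamma(t))\cdot\tau_K(\gamma(t))|=|\nabla_K u(\gamma(t))|$ (recall that for smooth $u$ one has $\nabla_K u=(\nabla u\cdot\tau_K)\tau_K$ with $|\tau_K|=1$), the Cauchy--Schwarz inequality yields
\[
|u(x)-u(y)|\leqslant \int_0^\ell |\nabla_K u(\gamma(t))|\,dt\leqslant \ell^{1/2}\Big(\int_0^\ell |\nabla_K u(\gamma(t))|^2\,dt\Big)^{1/2}.
\]
Finally, since $\gamma$ is injective with image contained in $K$, the area formula for Lipschitz maps gives $\int_0^\ell |\nabla_K u(\gamma(t))|^2\,dt=\int_{\gamma([0,\ell])}|\nabla_K u|^2\,d\mathcal{H}^1\leqslant \int_K |\nabla_K u|^2\,d\mathcal{H}^1=\|\nabla_K u\|_{L^2(K)}^2$, which proves \eqref{inequS} (with constant $1$) for smooth $u$ and all $x,y\in K$.

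\textbf{Step 2 (passage to the limit).} Let $u\in H^1(K)$ and choose $u_n\in\mathcal{D}(K)$, i.e.\ restrictions to $K$ of $C^\infty(\R^N)$ functions, with $N(u_n-u)\to 0$; thus $u_n\to u$ in $L^2(K)$ and $\nabla_K u_n\to\nabla_K u$ in $L^2(K;\R^N)$. Extracting a subsequence, we may assume $u_n\to u$ $\mathcal{H}^1$-a.e.\ on $K$, and we let $E\subset K$ be the set of full $\mathcal{H}^1$-measure on which this convergence holds. Applying Step 1 to each $u_n$ and letting $n\to\infty$ gives, for all $x,y\in E$,
\[
|u(x)-u(y)|=\lim_{n\to\infty}|u_n(x)-u_n(y)|\leqslant \dist_K(x,y)^{1/2}\lim_{n\to\infty}\|\nabla_K u_n\|_{L^2(K)}=\dist_K(x,y)^{1/2}\|\nabla_K u\|_{L^2(K)} .
\]
Since $E\times E$ has full $(\mathcal{H}^1\times\mathcal{H}^1)$-measure in $K\times K$, this is exactly \eqref{inequS}.

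\textbf{Main obstacle.} The only non-routine point is Step 1: one must know that a length-minimizing path between $x$ and $y$ exists and can be chosen injective, and that the integral of $|\nabla_K u|^2$ along it is dominated by the full Dirichlet energy on $K$. Both are easy here --- $K$ being a finite polygonal loop, the geodesic between two points is simply the shorter of the two arcs joining them --- but they are worth spelling out in order to obtain the inequality with the sharp constant $1$. A minor technical caveat in Step 2 is that ``$\mathcal{H}^1$-a.e.\ $x,y$'' is to be read as ``$(\mathcal{H}^1\times\mathcal{H}^1)$-a.e.\ $(x,y)$'', which is precisely what the argument delivers.
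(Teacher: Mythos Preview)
Your proof is correct and follows essentially the same route as the paper's: first establish \eqref{inequS} for smooth $u$ by integrating along an injective unit-speed geodesic and applying Cauchy--Schwarz, then pass to the limit via an $\mathcal{H}^1$-a.e.\ convergent subsequence. Your write-up is in fact slightly more explicit than the paper's in two places --- you spell out that $\gamma'(t)=\pm\tau_K(\gamma(t))$ and invoke the area formula to bound $\int_0^\ell |\nabla_K u(\gamma)|^2\,dt$ by the full Dirichlet energy --- but the argument is the same.
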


\begin{proof} Assume first that $u \in C^\infty(\R^N)$ and let $x,y \in K$ be given. We know that there exists a geodesic Lipschitz curve $\gamma : [0,L]\to K$ with $L={\dist}_K(x,y)$,  which is injective,   parametrized with constant speed so that 
$|\gamma'(t)|=1$, such that $\gamma(0)=x$, $\gamma(L)=y$ and 
$$\int_{0}^L |\gamma'(s)| \;ds= {\rm dist}_K(x,y).$$
The function $u\circ \gamma :[0,L]\to \R$ is Lipschitz continuous,  thus in particular absolutely continuous and therefore
$$u(x)-u(y)=\int_0^L \nabla u(\gamma (t))\cdot\gamma'(t) \;dt,$$
from which we easily deduce that 
\begin{eqnarray}
|u(x)-u(y)| &\leqslant& \left(\int_0^L |\gamma'(t)|^2\, dt\right)^{\frac{1}{2}} \left(\int_0^L (\nabla u(\gamma (t)) \cdot \gamma'(t) )^2 \;dt \right)^{\frac{1}{2}} \notag \\
&\leqslant &{\rm dist}_K(x,y)^\frac{1}{2} \|\nabla_K u\|_{L^2(K)} , \notag
\end{eqnarray}
which  proves \eqref{inequS} in the case of a smooth $u\in \mathcal{D}(K)$. Now if $u\in H^1(K)$ we know by definition that there exists a sequence of functions $u_n \in \mathcal{D}(K)$ such that $u_n\to u$ in $L^2(K)$ and $\nabla_K u_n \to \nabla_K u$   in $L^2(K)$. Up to extracting a subsequence we can assume that $u_n\to u$ $\mathcal{H}^1$-a.e.\! on $K$. Applying \eqref{inequS}   to $u_n$ and then passing to the limit we  then conclude that \eqref{inequS} also holds for $u$.
\end{proof}

From Proposition \ref{embb} we get the following immediate corollary.

\medskip\begin{corollary} If $K$ is a polygon,  then every function $u\in H^1(K)$ admits an $L^2$-representative which is continuous.
\end{corollary}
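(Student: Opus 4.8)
The plan is to promote the Morrey-type estimate of Proposition \ref{embb} to a genuine modulus of continuity and then invoke a standard density/completion argument.

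Fix $u \in H^1(K)$. By Proposition \ref{embb} and \eqref{inequS} there is a set $K_0 \subseteq K$ with $\mathcal{H}^1(K \setminus K_0) = 0$ such that
$$|u(x) - u(y)| \leqslant {\dist}_K(x,y)^{1/2}\,\|\nabla_K u\|_{L^2(K)} \qquad \text{for all } x,y \in K_0,$$
so that $u|_{K_0}$ is $\tfrac12$-Hölder continuous, hence uniformly continuous, for the geodesic metric ${\dist}_K$.

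Next I would record two facts. First, $K_0$ is dense in $K$: the measure $\mathcal{H}^1$ restricted to $K$ assigns positive mass to every nonempty relatively open subset of $K$ (such a set contains a nondegenerate subsegment of some $[A_i,A_{i+1}]$, since the segments not through a given point lie at positive distance from it and there are finitely many), so an $\mathcal{H}^1$-null set has empty interior in $K$, i.e. $\overline{K_0} = K$. Second, $(K,{\dist}_K)$ is a compact metric space whose topology coincides with the one induced by the Euclidean metric: on one hand $|x-y|\leqslant {\dist}_K(x,y)$, and on the other hand a small Euclidean ball around any point $x\in K$ meets $K$ only in the one or two subsegments emanating from $x$, on which the two distances agree; hence the identity map is a homeomorphism and $K$ is its own completion for ${\dist}_K$.

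Since a uniformly continuous function on a dense subset of a metric space extends uniquely to a continuous function on the whole space, $u|_{K_0}$ extends to a continuous $\tilde u : K \to \R$. By construction $\tilde u = u$ on $K_0$, hence $\mathcal{H}^1$-a.e. on $K$; thus $\tilde u \in L^2(K)$ is the desired continuous representative of $u$. The only mildly delicate point is the identification of the geodesic and Euclidean topologies (so that ``continuous'' is unambiguous); the remainder is the routine completion argument together with the positivity of $\mathcal{H}^1$ on relatively open subsets of $K$.
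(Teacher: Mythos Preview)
Your argument is correct and is exactly the natural elaboration of what the paper leaves implicit: the paper states this corollary as an ``immediate'' consequence of Proposition~\ref{embb} with no written proof, and your density/extension argument is the standard way to pass from an a.e.\ H\"older estimate to a genuine continuous representative. The only point worth noting is that the existence of a single full-measure set $K_0$ on which \eqref{inequS} holds for \emph{all} pairs $x,y$ is already built into the paper's proof of Proposition~\ref{embb} (it takes $K_0$ to be the set where $u_n\to u$ pointwise), so you may simply cite that rather than rederive it.
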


We will also need the following $L^\infty$ estimate.

\medskip\begin{corollary}\label{bounded} If $K$ is a polygon,  then every function $u\in H^1(K)$ is bounded. Moreover,
$$\|u\|_{L^\infty(K)}\leqslant  \frac{1}{\mathcal{H}^1(K)^\frac{1}{2}}\|u\|_2+  (\mathcal{H}^1(K))^{\frac{1}{2}}\|\nabla_K u\|_2.$$
\end{corollary}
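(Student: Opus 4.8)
The plan is to combine the Sobolev-type estimate of Proposition~\ref{embb} with a Chebyshev-type pigeonhole argument that controls the value of $u$ at a well-chosen base point. First I would observe that for any two points $x,y\in K$ the geodesic distance satisfies $\dist_K(x,y)\leqslant \mathcal{H}^1(K)$, since $K$ is connected and any geodesic joining $x$ to $y$ has length at most the total length of $K$. Hence \eqref{inequS} upgrades to
$$|u(x)-u(y)|\leqslant \mathcal{H}^1(K)^{\frac12}\,\|\nabla_K u\|_{L^2(K)},\qquad \text{for }\mathcal{H}^1\text{-a.e. }x,y\in K.$$

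Next, set $t:=\mathcal{H}^1(K)^{-\frac12}\|u\|_{L^2(K)}$ and consider $E:=\{y\in K:\ |u(y)|\leqslant t\}$. A Chebyshev argument shows $\mathcal{H}^1(E)>0$: otherwise $|u|>t$ holds $\mathcal{H}^1$-a.e.\! on $K$, so $u^2-t^2>0$ $\mathcal{H}^1$-a.e.\! on $K$, which (as $\mathcal{H}^1(K)>0$ for a polygon) forces $\|u\|_{L^2(K)}^2=\int_K u^2\,d\mathcal{H}^1>t^2\mathcal{H}^1(K)=\|u\|_{L^2(K)}^2$, a contradiction. Applying Fubini to the $\mathcal{H}^1\times\mathcal{H}^1$-a.e.\! statement above, for $\mathcal{H}^1$-a.e.\! $y$ the inequality $|u(x)-u(y)|\leqslant \mathcal{H}^1(K)^{\frac12}\|\nabla_K u\|_{2}$ holds for $\mathcal{H}^1$-a.e.\! $x$; intersecting this full-measure set of admissible $y$'s with $E$ still leaves a set of positive measure, so I may fix one such base point $y_0\in E$. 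Then for $\mathcal{H}^1$-a.e.\! $x\in K$,
$$|u(x)|\leqslant |u(x)-u(y_0)|+|u(y_0)|\leqslant \mathcal{H}^1(K)^{\frac12}\|\nabla_K u\|_{L^2(K)}+\frac{1}{\mathcal{H}^1(K)^{\frac12}}\|u\|_{L^2(K)}.$$
Since by the preceding corollary $u$ admits a continuous representative, the essential supremum equals the genuine supremum, and the displayed bound is exactly the claimed estimate on $\|u\|_{L^\infty(K)}$; in particular $u$ is bounded.

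There is no serious obstacle here: the only point requiring a little care is the measure-theoretic bookkeeping, namely extracting from the $\mathcal{H}^1\times\mathcal{H}^1$-a.e.\! validity of \eqref{inequS} a single admissible base point $y_0$ lying simultaneously in the sublevel set $E$, which is handled by the Fubini argument above.
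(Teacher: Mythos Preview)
Your proof is correct. Both your argument and the paper's start from the oscillation estimate of Proposition~\ref{embb} together with the bound $\dist_K(x,y)\leqslant\mathcal{H}^1(K)$; the only difference is in how the value at the base point is controlled. The paper simply integrates the inequality $u(x)-u(y)\leqslant\mathcal{H}^1(K)^{1/2}\|\nabla_K u\|_2$ with respect to $y\in K$, divides by $\mathcal{H}^1(K)$, and then bounds the resulting average $\mathcal{H}^1(K)^{-1}\int_K u\,d\mathcal{H}^1$ by $\mathcal{H}^1(K)^{-1/2}\|u\|_2$ via H\"older (and repeats with $-u$). You instead invoke Chebyshev to locate a single base point $y_0$ with $|u(y_0)|\leqslant\mathcal{H}^1(K)^{-1/2}\|u\|_2$. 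Both routes give exactly the same constant; the paper's averaging step is a touch more economical because it avoids the Fubini bookkeeping you need in order to ensure that the chosen $y_0$ is simultaneously admissible for the a.e.\ oscillation bound.
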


\begin{proof} For $u \in H^1(K)$ we know from Proposition \ref{embb} that for $\mathcal{H}^1$-a.e.\!  $x,y \in K$,
$$|u(x)-u(y)| \leqslant \|\nabla_K u\|_2{\rm dist}_K(x,y)^{\frac{1}{2}} .$$
In particular,
$$u(x)-u(y) \leqslant \|\nabla_K u\|_2 (\mathcal{H}^1(K))^{\frac{1}{2}},$$
thus  integrating with respect to $y \in K$  and dividing by $\mathcal{H}^1(K)$ we get 
$$u(x)-\frac{1}{\mathcal{H}^1(K)}\int_K u(y)  \;d\mathcal{H}^1(y)   \leqslant \|\nabla_K u\|_2 (\mathcal{H}^1(K))^{\frac{1}{2}}.$$
Finally using H\"older inequality,
$$u(x)  \leqslant  \|\nabla_K u\|_2 (\mathcal{H}^1(K))^{\frac{1}{2}} + \frac{1}{\mathcal{H}^1(K)^\frac{1}{2}}\|u\|_2.$$
 Reasoning the same way with $-u$ we get 
$$|u(x)|  \leqslant  \|\nabla_K u\|_2 (\mathcal{H}^1(K))^{\frac{1}{2}} + \frac{1}{\mathcal{H}^1(K)^\frac{1}{2}}\|u\|_2,$$
which proves the Corollary.
\end{proof}
 
We can also prove the following compact embedding result.

\medskip\begin{corollary} \label{compact}Let $K$ be a polygon.  The embedding $H^1(K)\hookrightarrow L^2(K)$ is compact. More precisely, from every bounded sequence $(u_n)_{n\in \mathbb{N}}$ in $H^1(K)$ we can extract a uniformly converging sequence, and in particular a converging sequence in $L^2(K)$.
\end{corollary}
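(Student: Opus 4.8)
The plan is to deduce the compact embedding $H^1(K)\hookrightarrow L^2(K)$ directly from the quantitative H\"older estimate of Proposition~\ref{embb} together with the uniform $L^\infty$ bound of Corollary~\ref{bounded}, via an Arzel\`a--Ascoli argument. First I would observe that since $K$ is a compact metric space when equipped with the geodesic distance $\dist_K$ (it is a finite union of segments, hence connected, compact and of finite total length, so $\dist_K$ is a genuine metric inducing the same topology as the Euclidean one), any family of functions that is uniformly bounded and uniformly equicontinuous with respect to $\dist_K$ is relatively compact in $C^0(K)$ for the uniform norm.

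Next, let $(u_n)_{n\in\mathbb{N}}$ be a bounded sequence in $H^1(K)$, say $N(u_n)\leqslant C$ for all $n$. By the corollary preceding this one, each $u_n$ has a continuous representative, which we still denote $u_n$, and by Corollary~\ref{bounded} we have $\|u_n\|_{L^\infty(K)}\leqslant C'$ with $C'$ depending only on $C$ and $\mathcal{H}^1(K)$; this gives uniform boundedness. For equicontinuity, Proposition~\ref{embb} gives, for the continuous representatives and for \emph{all} $x,y\in K$ (the a.e.\ statement upgrades to everywhere by continuity of both sides),
$$|u_n(x)-u_n(y)|\leqslant \dist_K(x,y)^{1/2}\,\|\nabla_K u_n\|_{L^2(K)}\leqslant C\,\dist_K(x,y)^{1/2},$$
which is a uniform modulus of continuity independent of $n$. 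Hence by Arzel\`a--Ascoli we can extract a subsequence $(u_{n_k})$ converging uniformly on $K$ to some continuous limit $u$. Uniform convergence on the bounded set $K$ implies $L^2(K)$ convergence since $\mathcal{H}^1(K)<\infty$, so $u_{n_k}\to u$ in $L^2(K)$ as well.

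The only remaining point, which is minor, is to note that the statement claims compactness of the embedding, so one may wish to remark that the limit $u$ actually lies in $H^1(K)$: this follows because $(u_{n_k})$ is bounded in $H^1(K)$, hence (up to a further subsequence) $\nabla_K u_{n_k}$ converges weakly in $L^2(K;\R^N)$ to some $w$, and passing to the limit in the defining relation identifies $w=\nabla_K u$, so $u\in H^1(K)$. I do not expect any serious obstacle here; the one place requiring a little care is the verification that $\dist_K$ is a metric inducing the right topology and that geodesics between any two points of $K$ exist with length $\dist_K(x,y)$ — but this was already used in the proof of Proposition~\ref{embb} and is elementary for a polygon, so it can simply be invoked. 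The heart of the argument is thus the passage from the $H^1$ bound to an $\alpha$-H\"older ($\alpha=1/2$) bound, which Proposition~\ref{embb} already provides, making Arzel\`a--Ascoli immediately applicable.
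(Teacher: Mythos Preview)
Your proposal is correct and follows essentially the same approach as the paper: bound the sequence uniformly via Corollary~\ref{bounded}, obtain equicontinuity from the H\"older estimate of Proposition~\ref{embb}, and conclude by Arzel\`a--Ascoli. Your additional remark that the limit lies in $H^1(K)$ is not needed for compactness of the embedding into $L^2(K)$ (relative compactness in the target space suffices), but it does no harm.
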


\begin{proof} Let $(u_n)$ be a bounded sequence in $H^1(K)$. Then for each $n$ we consider the specific $L^2$ representative of $u_n$ for which  Proposition \ref{embb}  yields  the  estimate 
$$|u_n(x)-u_n(y)| \leqslant C{\rm dist}_K(x,y)^{\frac{1}{2}},$$
where the constant $C$ is uniform in $n$.  In particular the sequence $(u_n)$ is equicontinuous on the compact set $K\subset \R^2$.  Moreover applying Corollary \ref{bounded} we know that  $(u_n)$ is also equibounded. Thank to Arzel\`a-Ascoli theorem, we deduce the existence of a subsequence $(u_{n_k}) $ that converges uniformly on $K$. This achieves the proof of the Corollary.
\end{proof}

 \subsection{Definition of $\lambda_1$ and spectral theory}

 \medskip\begin{definition}[Definition of $\lambda_1^A$] \label{deflambda1} Let  $V_A$ be the rectifiable $1$-varifold associated to a polygon $A=\{A_i\}_{1\leqslant i \leqslant n+1}$ as defined in Section \ref{polygon}. Then we define the discreate measure 
 $$\mu_A:=\sum_{i=1}^n\delta_{A_i},$$
 and the vectorial function $${\bf H}(x)= \left(\frac{A_{i}-A_{i-1}}{|A_i-A_{i-1}|}+ \frac{A_{i}-A_{i+1}}{|A_i-A_{i+1}|}\right) {\bf 1}_{\{A_i\}}(x)$$
 in such a way that $\delta V_A= {\bf H}\mu_A$.
 Then we finally define the  quantity
 \begin{eqnarray}
 \lambda_1^A:=\inf_{u \in C^\infty(\mathbb{R}^N) \setminus \{0\} \text{ s.t. } \int_{\R^N} u \; d \mu_A=0} \frac{\int |\nabla_P u(x)|^2 \;d V_A(x,P)}{\int_{\mathbb{R}^N} u^2\;d\mu_A}. \label{lambdaP}
 \end{eqnarray}
 \end{definition}
 
 As we will see later, this definition of $\lambda_1^A$ is the right one in order to obtain a Reilly inequality. Before that, we would like to link $\lambda_1^A$ with the first eigenvalue of a certain operator.  We could try to apply the standard spectral theory for quadratic forms directly with the space $H^1(K)$, but  the problem comes from the fact that the Dirichlet energy is not coercive with respect to the norm $\int_{\mathbb{R}^N} u^2\;d\mu_A$.  Therefore, we will first reduce to a finite dimensional space via the following proposition, that says that an eigenfunction is the solution of the following Robin type eigenvalue problem
 $$
 \left\{
 \begin{array}{ll}
  u'' = 0,& \text{ on } ]A_i , A_{i+1}[,\\
\displaystyle \frac{\partial u}{\partial \nu^+}  -  \frac{\partial u}{\partial \nu^-} =\lambda_1 u, & \text{ on } A_i.
 \end{array}
\right.
 $$
This shows that $u$ must be piecewise affine and will allow us to reduce the study to a finite dimensional space.
 
 \medskip\begin{proposition}[First order conditions] \label{firstorder1}  The infimum in \eqref{lambdaP} is achieved  in the space $H^1(K_A)$ for a function $u$ that satisfies the following first order condition: let  $T_i$ be the parameterization of $[A_i,A_{i+1}]$ defined by $T_i(t) := (1-t)A_i + tA_{i+1}$. We also denote by $\ell_i:=|A_{i+1}-A_i|$. Then there are  some constants $\alpha_i,\beta_i \in \R$ such that
$$
\left\{
 \begin{array}{ll}
u\circ T_i(t)=\alpha_i +t \beta_i, \quad  &\textnormal{ for } t\in[0,1] \textnormal{ and }  1\leqslant i \leqslant n,\\
\alpha_i+\beta_i=\alpha_{i+1},\\\displaystyle 
\lambda_1\alpha_i=\left(\frac{\beta_{i-1}}{\ell_{i-1}} - \frac{\beta_i}{\ell_i}\right), \quad &\textnormal{for } 1\leqslant i  \leqslant n.
 \end{array}
  \right. 
 $$
and $\alpha_{n+1}=\alpha_1$ and $\beta_{n+1}=\beta_1$.
 \end{proposition}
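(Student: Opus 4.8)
The plan is to first establish that the infimum defining $\lambda_1^A$ is attained, and then to derive the Euler–Lagrange (first order) conditions by a variational argument. For the existence part, I would argue exactly as one does for the classical Rayleigh quotient but using the compactness machinery already developed: take a minimizing sequence $u_k \in C^\infty(\R^N)$ normalized by $\int_{\R^N} u_k^2\,d\mu_A = \sum_i u_k(A_i)^2 = 1$ and $\sum_i u_k(A_i)=0$, with $\int_{K_A}|\nabla_P u_k|^2\,d\HH^1 \to \lambda_1^A$. Restricting to $K_A$, these are bounded in $H^1(K_A)$ (the Dirichlet energies are bounded, and the $L^2(K_A)$ norms are controlled by Corollary~\ref{bounded} since $\|u_k\|_{L^\infty(K_A)}$ is bounded once we know $\|u_k\|_{L^2(K_A)}$ is — which requires a small argument: the constraint pins the average of $u_k$ over the vertices, not over $K_A$, so I would combine the Morrey-type estimate \eqref{inequS} with the vertex normalization to bootstrap an $L^\infty$, hence $L^2(K_A)$, bound). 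By Corollary~\ref{compact} a subsequence converges uniformly on $K_A$ to some $u \in H^1(K_A)$; uniform convergence passes the vertex constraints and the vertex-normalization to the limit, and lower semicontinuity of the Dirichlet energy under weak $L^2$ convergence of $\nabla_K u_k$ gives that $u$ realizes the infimum. Note the normalization survives precisely because uniform convergence controls pointwise values at the finitely many $A_i$.

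Next, the first order conditions. Since the minimizer lies in $H^1(K_A)$, I would first perturb within the interior of each edge: for $\varphi\in C^\infty_c(]A_i,A_{i+1}[)$ with $\varphi$ vanishing near all vertices (so both constraints $\sum_j (u+t\varphi)(A_j)=0$ and the denominator are unchanged to first order — indeed unchanged exactly), the usual argument $\frac{d}{dt}\big|_{t=0}$ of the Rayleigh quotient gives $\int_{[A_i,A_{i+1}]} (u\circ T_i)'\,(\varphi\circ T_i)'\,\frac{dt}{\ell_i} = 0$, i.e. $(u\circ T_i)'' = 0$ weakly on $]0,1[$. Hence $u\circ T_i(t) = \alpha_i + t\beta_i$ is affine on each edge, with $\alpha_i,\beta_i\in\R$, and continuity of $u$ across the vertex $A_i$ (Corollary following Proposition~\ref{embb}) forces $\alpha_i + \beta_i = u(A_{i+1}) = \alpha_{i+1}$, with the cyclic closure $\alpha_{n+1}=\alpha_1$, $\beta_{n+1}=\beta_1$. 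For the vertex (Robin) condition, I would now test with perturbations $\varphi\in C^\infty(\R^N)$ that are \emph{not} constrained — to handle the constraint $\int u\,d\mu_A=0$ I would use the standard Lagrange-multiplier trick: take a fixed $\psi$ with $\int \psi\,d\mu_A = 1$ and replace $\varphi$ by $\varphi - (\int\varphi\,d\mu_A)\psi$, which is admissible. Since $u$ is piecewise affine, $\int_{K_A}|\nabla_P u|^2\,d\HH^1 = \sum_i \beta_i^2/\ell_i$ and $\int u\varphi\,d\mu_A = \sum_i \alpha_i\varphi(A_i)$; computing the derivative of the Rayleigh quotient and using $\lambda_1^A = (\sum_i\beta_i^2/\ell_i)/(\sum_i\alpha_i^2)$ yields, after the multiplier cancellation, an identity of the form $\sum_i\big(\tfrac{\beta_{i-1}}{\ell_{i-1}} - \tfrac{\beta_i}{\ell_i}\big)\varphi(A_i) = \lambda_1^A \sum_i \alpha_i\varphi(A_i)$ for every test $\varphi$; since the $\varphi(A_i)$ range over all of $\R^n$ subject only to one linear relation (which is itself consistent because $\sum_i(\tfrac{\beta_{i-1}}{\ell_{i-1}}-\tfrac{\beta_i}{\ell_i}) = 0$ telescopes and $\sum_i\alpha_i=0$), we get the pointwise relation $\lambda_1^A\alpha_i = \tfrac{\beta_{i-1}}{\ell_{i-1}} - \tfrac{\beta_i}{\ell_i}$ for each $i$.

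I expect the main obstacle to be the \emph{existence/regularity of the minimizer in the right space}: the denominator $\int u^2\,d\mu_A$ only sees the finitely many vertex values, so the quadratic form is not coercive for the natural $H^1(K_A)$ norm (as the paper itself flags), and one must be careful that a minimizing sequence does not degenerate — e.g. concentrate oscillation away from the vertices while keeping vertex values fixed. The resolution is exactly the Morrey-type estimate \eqref{inequS}: it ties the oscillation of $u$ on all of $K_A$ to $\|\nabla_K u\|_2$, so bounded Dirichlet energy plus bounded vertex values forces a genuine uniform bound and hence compactness via Arzelà–Ascoli (Corollary~\ref{compact}). Once existence in $H^1(K_A)$ with continuous representative is secured, the piecewise-affine structure and the Robin jump condition are routine first-variation computations, made especially clean by the fact that after the edge analysis the problem has been reduced to the finite-dimensional quadratic optimization $\min\{\sum\beta_i^2/\ell_i : \sum\alpha_i^2 = 1,\ \sum\alpha_i = 0,\ \alpha_i+\beta_i=\alpha_{i+1}\}$, whose Lagrange conditions are precisely the stated system.
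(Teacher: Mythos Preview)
Your proposal is correct and follows essentially the same route as the paper: direct method for existence (which the paper in fact leaves to the reader, invoking only Corollary~\ref{compact}), interior perturbations supported away from the vertices to obtain piecewise affinity and the continuity relation $\alpha_i+\beta_i=\alpha_{i+1}$, then a vertex variation to extract the Robin-type jump condition. The only cosmetic difference is that the paper derives the vertex condition by testing with the specific admissible function $\theta-1$ where $\theta\in C^\infty_c(\R^N)$ satisfies $\theta(A_{i_0})=n$ and $\theta(A_j)=0$ for $j\neq i_0$, while you use the general Lagrange-multiplier reduction; both compute $\int_{K_A}\nabla_P u\cdot\nabla_P\varphi\,d\mathcal H^1=\sum_i\varphi(A_i)\big(\tfrac{\beta_{i-1}}{\ell_{i-1}}-\tfrac{\beta_i}{\ell_i}\big)$ and conclude identically.
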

 
\begin{proof}  The existence of a minimum $u$ in $H^1(K_A)$ follows from the direct method of calculus of variations due to the compact embedding of Corollary \ref{compact}. The details are left to the reader. Now for  $t\in \R$ we consider a function of the form  $u_t:=u+t \theta$ as a test function, with $\theta \in C^\infty_c(\R^N)$ satisfying ${\rm supp}(\theta) \cap \{A_i\}=\emptyset$. Then $\int_{\R^N}(u+t \theta )\mu_A=0$ and $\int_{\mathbb{R}^N} u_t^2\;d\mu_A=\int_{\mathbb{R}^N} u^2\;d\mu_A$ for all $t \in \R$. Since $u$ is an eigenfunction we deduce that 
 $$ \forall t \in \mathbb{R},\quad \int_{K_A} |\nabla_P u(x)|^2 \;d  \mathcal{H}^1  \leqslant  \int_{K_A} |\nabla_P u_t(x)|^2 \;d \mathcal{H}^1. $$
 In other words the restriction of $u$ to each segment $[A_i,A_{i+1}]$ must be harmonic, and therefore an affine function. We deduce that if $T_i$ is the parameterization of $[A_i,A_{i+1}]$ as in the statement  of the proposition, then there exists some constants $\alpha_i,\beta_i \in \R$ such that 
$u\circ T_i(t)=\alpha_i + t \beta_i$. We also get the condition $\alpha_i+\beta_i=\alpha_{i+1}$ because $u$ must be globally continuous on $K_A$ (because $u$ belongs to $H^1(K_A)$ and $K_A$ is one dimensional). 

 Now we consider again a function of the form  $u_t:=u+t ( \theta-1)$ as a test function, but now with $\theta$ whose support touching one vertex $A_{i_0}$, with $\theta(A_{i_0})=n$. By this way we have $\int_{\R^N} \theta-1 \; d\mu_A=0$ and $u_t$ is an admissible test function for the Rayleigh quotient. Since $u$ is an eigenfunction we must have 
  
$$ \lambda_1=\frac{\int |\nabla_P u(x)|^2 \;d V_A(x,P)}{\int_{\mathbb{R}^N} u^2\;d\mu_A}\leqslant   \frac{\int |\nabla_P u_t(x)|^2 \;d V_A(x,P)}{\int_{\mathbb{R}^N} u_t^2\;d\mu_A}.$$
By reasoning exactly as in the proof of Proposition \ref{firstorder} we deduce that 
$$\lambda_1 \sum_{i=1}^n  u(A_i)( \theta(A_i)-1) = \int_{K_A} \nabla_P u \cdot \nabla_P \theta \; d\mathcal{H}^1.$$
But since 
$$ \sum_{i=1}^n  u(A_i)=0 \quad \text{ and } \quad \theta(A_j)=0 \text{ for }j \not = i_0,$$
we finally obtain
$$n\lambda_1   u(A_{i_0})   = \int_{K_A} \nabla_P u \cdot \nabla_P \theta \; d\mathcal{H}^1.$$
Then we notice that 
$$u(A_{i_0})=\alpha_{i_0}.$$
We denote by $\ell_i:=|A_{i+1}-A_i|$ in such a way that $T_i$ is a parametrization of the segment $[A_i,A_{i+1}]$ with constant speed $\ell_i$. In other words, any smooth function $g$ on the segment $[A_i,A_{i+1}]$ can be integrated via the formula
$$\int_{[A_i,A_{i+1}]} g \;d\mathcal{H}^1 = \ell_i\int_{0}^1 g\circ T_i   \; dt.$$
Moreover, 
$$(g\circ T_i(t))'=\ell_i(\nabla_Pg)(T_i(t)),$$
so that 
$$\int_{[A_i,A_{i+1}]} \nabla_P u \cdot \nabla_P \theta \; d\mathcal{H}^1 =\frac{1}{\ell_i}\int_0^1 (u\circ T_i)'(\theta\circ T_i)' \; dt $$
Then integrating on $K_A$,
\begin{eqnarray}
\int_{K_A} \nabla_P u \cdot \nabla_P \theta \; d\mathcal{H}^1 &=& \sum_{i=1}^n \frac{1}{\ell_i} \int_{0}^1 \beta_i  (\theta\circ T_i)'(t) dt  \notag \\
&=&\theta(A_{i_0})\left(\frac{\beta_{i_0-1}}{\ell_{i_0-1}} - \frac{\beta_{i_0}}{\ell_{i_0}}\right)    \notag 
\end{eqnarray}
which achieves the proof.
 \end{proof}

  The above proposition  shows that all eigenfunctions are piecewise affine functions that are moreover continuous accros the vertices (because there are in the space $H^1(K)$). We can identify this space with the following one:
  $$E:=\left\{(\alpha,\beta) \in \R^n \times \R^n  \;|\; \text{ such that }  \alpha_i+\beta_i =\alpha_{i+1} ,  \quad 1\leqslant i \leqslant n\right\},$$
where a piecewise affine function $u$ is taken of the form $\alpha_i +\beta_i t$ on the edge $[A_i,A_{i+1}]$, of length $\ell_i$, parameterized by $T_i(t) := (1-t)A_i + tA_{i+1}$. Then we can identify our $\lambda_1$ as being the first non zero eigenvalue of a finite dimensional space, as  summarized in the following statement that follows from standard (finite dimensional) spectral theory.
 
\medskip\begin{corollary} \label{spectrumfinite} Let  $A=\{A_i\}_{1\leqslant n+1}$  be a polygon with $n$ vertices  (i.e. $A_{n+1}=A_1$) and let $\ell_i=|A_{i+1}-A_i|$ be the length of its sides. Let  $E\subset \R^n \times \R^n$ be the finite dimensional vectorial space defined by
$$E:=\left\{(\alpha,\beta) \in \R^n \times \R^n  \;|\; \text{ such that }  \alpha_i+\beta_i =\alpha_{i+1} ,  \quad 1\leqslant i \leqslant n\right\},$$
where by convention we have denoted  in the above, $\alpha_{n+1}=\alpha_1$ and $\beta_{n+1}:=\beta_1$. Then  $E$  endowed with the norm  
\begin{eqnarray}
\|(\alpha,\beta)\|_E :=\left(\sum_{i=1}^n \alpha_i^2\right)^{1/2} \label{essS}
\end{eqnarray}
is an Euclidean space and the following quadratic form
$$Q(\alpha,\beta):=\sum_{i=1}^n \frac{\beta_i^2}{\ell_i}$$
defines an associated  symmetric linear operator $M:E\to E$ such that 
$$\forall  u \in E, \quad Q(u)=\langle M u , u \rangle.$$
In particular,  $M$ admits  a finite number of eigenvalues $(\lambda_k)_{0\leqslant k \leqslant n-1}$ such that  $\lambda_0=0$ and $\lambda_1=\lambda_1^A$.  Moreover,
$$\lambda_1^A=\lambda_1(M)=\min_{u \in E, u\not = 0, \sum_{i=1}^n\alpha_i=0} \frac{Q(u)}{\|u\|_E^2}.$$
\end{corollary}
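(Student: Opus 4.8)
The plan is to reduce everything to finite-dimensional linear algebra, using Proposition \ref{firstorder1} as the bridge between the variational definition \eqref{lambdaP} and the space $E$. First I would check that $\|\cdot\|_E$ is a genuine norm on $E$: it is induced by the bilinear form $\langle(\alpha,\beta),(\alpha',\beta')\rangle_E:=\sum_{i=1}^n\alpha_i\alpha_i'$, which is symmetric and non-negative, and is definite on $E$ because the constraint $\beta_i=\alpha_{i+1}-\alpha_i$ forces $(\alpha,\beta)=0$ as soon as all $\alpha_i$ vanish. In particular $E$ is parametrized by $\alpha\in\R^n$, so $\dim E=n$ and $(E,\langle\cdot,\cdot\rangle_E)$ is a Euclidean space.

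Next, $Q(\alpha,\beta)=\sum_i\beta_i^2/\ell_i=\sum_i(\alpha_{i+1}-\alpha_i)^2/\ell_i$ is a quadratic form on $E$; by polarization it has an associated symmetric bilinear form, and the Riesz representation theorem in finite dimension produces a unique self-adjoint $M\colon E\to E$ with $Q(u)=\langle Mu,u\rangle_E$. Since $\ell_i>0$ we have $Q\geqslant 0$, hence $M$ is positive semidefinite and the spectral theorem gives $n$ real eigenvalues $0\leqslant\lambda_0\leqslant\lambda_1\leqslant\cdots\leqslant\lambda_{n-1}$. The kernel of $M$ equals the kernel of $Q$, which by the equivalence $Q(u)=0\iff\beta_1=\cdots=\beta_n=0\iff\alpha$ constant is one-dimensional, spanned by the element with $\alpha=(1,\dots,1)$, $\beta=0$; thus $\lambda_0=0$ is simple and $\lambda_1>0$. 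The Courant--Fischer principle (equivalently, the spectral decomposition of $M$ restricted to $(\ker M)^\perp$) then yields $\lambda_1(M)=\min\{Q(u)/\|u\|_E^2:\ u\neq 0,\ u\perp\ker M\}$, and since the $E$-inner product only sees the $\alpha$-components, the condition $u\perp\ker M$ is exactly $\sum_i\alpha_i=0$, which is the asserted min-max formula.

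It then remains to identify $\lambda_1^A$ with $\lambda_1(M)$. By Proposition \ref{firstorder1} the infimum in \eqref{lambdaP} is attained by a function $u\in H^1(K_A)$ which on each edge reads $u\circ T_i(t)=\alpha_i+\beta_i t$ with $(\alpha,\beta)\in E$ (continuity across the vertices), and the constraint $\int u\,d\mu_A=\sum_i u(A_i)=\sum_i\alpha_i=0$ holds. For such $u$ one has $\int_{K_A}u^2\,d\mu_A=\sum_i\alpha_i^2=\|u\|_E^2$, and, using $(u\circ T_i)'=\ell_i(\nabla_P u)\circ T_i$ so that $|\nabla_P u|=|\beta_i|/\ell_i$ on the $i$-th edge (of length $\ell_i$), $\int_{K_A}|\nabla_P u|^2\,d\HH^1=\sum_i\beta_i^2/\ell_i=Q(u)$; hence $\lambda_1^A=Q(u)/\|u\|_E^2\geqslant\lambda_1(M)$. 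For the reverse inequality, any $(\alpha,\beta)\in E$ with $\sum_i\alpha_i=0$ gives a continuous piecewise affine function in $H^1(K_A)$, which by density of $\mathcal{D}(K_A)$ together with the uniform bound of Corollary \ref{bounded} is approximable by smooth functions on $\R^N$ for which the Rayleigh quotient of \eqref{lambdaP} converges; subtracting the discrete mean $\tfrac1n\sum_j u_n(A_j)$ restores the zero-mean constraint without affecting the limit, so $\lambda_1^A\leqslant Q(u)/\|u\|_E^2$, and taking the infimum over such competitors gives $\lambda_1^A\leqslant\lambda_1(M)$.

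The only genuinely delicate point is this last identification: checking that the minimizer furnished by Proposition \ref{firstorder1} really is an element of $E$ with $\sum_i\alpha_i=0$, and conversely that every admissible element of $E$ is reached, in the limit, by smooth competitors respecting the zero-mean constraint (this is where the compact embedding of Corollary \ref{compact} and the $L^\infty$ bound of Corollary \ref{bounded} do the work). Everything else is the standard spectral theory of a positive semidefinite symmetric endomorphism of a Euclidean space whose kernel is one-dimensional.
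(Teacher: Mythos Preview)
Your proof is correct and follows the same route as the paper: show $\|\cdot\|_E$ is a genuine inner-product norm on $E$ (because $\beta_i=\alpha_{i+1}-\alpha_i$), represent the quadratic form $Q$ by a symmetric positive semidefinite $M$ via Riesz, identify $\ker M$ with the constants and $(\ker M)^\perp$ with the zero-mean constraint, and then match $\lambda_1(M)$ with $\lambda_1^A$ through the piecewise-affine minimizer of Proposition~\ref{firstorder1}. The only difference is that you make explicit the density step (approximating an arbitrary $(\alpha,\beta)\in E$ with $\sum_i\alpha_i=0$ by smooth competitors, correcting the discrete mean), which the paper leaves implicit in its appeal to Proposition~\ref{firstorder1}; this is a welcome clarification rather than a different argument.
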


\begin{proof} The Corollary follows rather directly from standard finite dimensional spectral analysis. The main point is that, restricted to the subspace $E$, the quadratic form \eqref{essS} becomes a norm. Indeed, if $\sum_{i=1}^n \alpha_i^2=0$ then all the $\beta_i$ are zero because the definition of $E$ implies $\beta_i=\alpha_{i+1}-\alpha_i$.

  Then,  identifying any piecewise affine function equal to $\alpha_i+t \beta_i$ on $[A_i,A_{i+1}]$ with the corresponding element in the space $E$, the definition of  $Q$ has been taken in such a way that 
$$Q(u)= \sum_{i=1}^n \frac{\beta_i^2 }{\ell_i} = \int_{K_A} |\nabla_P u|^2 \;d  \mathcal{H}^1.$$
In other words the Rayleigh quotient on functions, is nothing but the following Rayleigh quotient in $E$,
$$ Q(u)/\|u\|_E^2,$$
and by definition of $\lambda_1^A$ we have
$$\lambda_1^A=\min_{u \in E, u\not = 0, \sum_{i=1}^n\alpha_i=0} \frac{Q(u)}{\|u\|_E^2}.$$
But now the above minimisation problem can be solved by    diagonalizing  the quadratic form $Q$. Let $B(u,v):=\frac{1}{4}(Q(u+v)-Q(u-v))$ be the symmetric bilinear form associated with $Q(u)$, and let $\langle  u,v\rangle_E$ be the scalar product associated with the norm $\|u\|_E$. Then by Riesz theorem, for any $v\in E$ there exists $w \in E$ such that $B(u,v)=\langle w, v\rangle_E$, and the mapping $v\mapsto w$ is linear. Denoting by $M$ this mapping we have $B(u,v)=\langle M u,v \rangle$ and in particular
$$Q(u)=\langle M u,u \rangle \quad \quad \text{ for all } u \in E.$$
Moreover,  $M$ is symmetric because $\langle M u,v \rangle=B(u,v)=B(v,u)=\langle M v,u \rangle$. Therefore, $M$ admits $n$ real eigenvalues $\lambda_k$, associated to  an orthonormal basis for the norm $\|\cdot\|_E$. Notice that the kernel of  $Q$ is exactly the constant functions, or equivalently, the elements $(\alpha_i,\beta_i)$ of $E$ such that $\beta_i=0$ for all $i$ and $\alpha_i=\alpha_{i+1}$, denoted by $F$. Notice also that $F^\bot$ is exactly given by the elements of $E$ satisfying $\sum_{i=1}^n\alpha_i=0$. We deduce that $\lambda_0=0$ and that 
$$\lambda_1(M)=\min_{u \in E\cap F^\bot, u\not = 0} \frac{Q(u)}{\|u\|_E^2}=\min_{u \in E, u\not = 0, \sum_{i=1}^n\alpha_i=0} \frac{Q(u)}{\|u\|_E^2}=\lambda_1^A,$$
which finishes the proof of the Corollary. 
\end{proof}
  
  \medskip\begin{definition}[Spectrum of a polygon] We define the spectrum $(\lambda_k)_{0\leqslant k\leqslant n-1}$ of a polygon  as being the spectrum of the linear operator $M$ defined in Corollary \ref{spectrumfinite}. In particular, $\lambda_1$ coincides with   $\lambda_1^A$ as in Definition \ref{deflambda1}. 
    \end{definition}

   \medskip\begin{proposition}[First order condition for $\lambda_k$] \label{firstorder2} Under the same notation as in the statement of Corollary \ref{spectrumfinite}). If $(\alpha,\beta)\in E$ is a eigenvector associated with $\lambda_k$ (the $k$-th eigenvalue of the operator $M$)  then the following first order condition holds:
    \begin{eqnarray} 
\left\{
 \begin{array}{ll}
\alpha_i+\beta_i=\alpha_{i+1},\\ \displaystyle
\lambda_k\alpha_i=\frac{\beta_{i-1}}{\ell_{i-1}} - \frac{\beta_i}{\ell_i}, \quad \textnormal{for } 1\leqslant i  \leqslant n.
 \end{array}
  \right. 
  \label{sysT}
 \end{eqnarray}
   \end{proposition}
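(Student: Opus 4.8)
The plan is to unwind the abstract eigenvalue equation $Mu=\lambda_k u$ by means of the definition of $M$ given in Corollary \ref{spectrumfinite} together with a discrete summation by parts. Write the eigenvector as $u=(\alpha,\beta)$. By the very construction of $M$, the relation $Mu=\lambda_k u$ is equivalent to
$$
B(u,v)=\lambda_k\,\langle u,v\rangle_E\qquad\text{for every } v=(\alpha',\beta')\in E,
$$
where $B$ is the symmetric bilinear form associated with $Q$, namely $B(u,v)=\sum_{i=1}^{n}\frac{\beta_i\beta_i'}{\ell_i}$, and $\langle u,v\rangle_E=\sum_{i=1}^{n}\alpha_i\alpha_i'$ by the definition of the norm $\|\cdot\|_E$ in \eqref{essS}.

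Next I would exploit the defining relation of $E$, that is $\beta_i'=\alpha_{i+1}'-\alpha_i'$ (with the cyclic convention $\alpha_{n+1}'=\alpha_1'$), to rewrite $B(u,v)$ purely in terms of $\alpha'$. Substituting and performing an Abel summation by parts, using the cyclic conventions $\beta_0=\beta_n$ and $\ell_0=\ell_n$, one obtains
$$
B(u,v)=\sum_{i=1}^{n}\frac{\beta_i}{\ell_i}\bigl(\alpha_{i+1}'-\alpha_i'\bigr)
=\sum_{i=1}^{n}\left(\frac{\beta_{i-1}}{\ell_{i-1}}-\frac{\beta_i}{\ell_i}\right)\alpha_i'.
$$
Hence the eigenvalue identity becomes $\sum_{i=1}^{n}\bigl(\frac{\beta_{i-1}}{\ell_{i-1}}-\frac{\beta_i}{\ell_i}-\lambda_k\alpha_i\bigr)\alpha_i'=0$ for every $v\in E$.

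Finally I would observe that the linear map $(\alpha',\beta')\mapsto\alpha'$ is a bijection of $E$ onto $\R^n$: given any $\alpha'\in\R^n$ there is a unique $\beta'$ with $(\alpha',\beta')\in E$, namely $\beta_i'=\alpha_{i+1}'-\alpha_i'$. Therefore $\alpha'$ ranges over all of $\R^n$, and the vanishing of the above sum for every such $\alpha'$ forces each bracket to vanish, i.e.
$$
\lambda_k\alpha_i=\frac{\beta_{i-1}}{\ell_{i-1}}-\frac{\beta_i}{\ell_i},\qquad 1\leqslant i\leqslant n.
$$
Together with the membership condition $\alpha_i+\beta_i=\alpha_{i+1}$, which simply expresses that $(\alpha,\beta)\in E$, this is precisely \eqref{sysT}. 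There is no serious obstacle in this argument; the only point demanding care is the bookkeeping with the cyclic index conventions in the summation by parts.
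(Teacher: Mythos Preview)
Your argument is correct and in fact cleaner than the paper's. The paper proceeds variationally: it writes $\lambda_k$ via the min-max principle on $E_k^\perp$, tests the Rayleigh quotient at the minimizer $u$ with perturbations $u+t(w_i-p_i)$ where $w_i=(\partial_i,\partial_{i-1}-\partial_i)\in E$ and $p_i=\pi_{E_k}(w_i)$, and then uses orthogonality of $u$ to $E_k$ (both in $\langle\cdot,\cdot\rangle_E$ and via $B(u,p_i)=\langle Mu,p_i\rangle_E=\lambda_k\langle u,p_i\rangle_E=0$) to kill the projection terms and recover the identity $\lambda_k\alpha_i=\beta_{i-1}/\ell_{i-1}-\beta_i/\ell_i$ one index at a time.

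You bypass all of this by using directly that $Mu=\lambda_k u$ is equivalent to $B(u,v)=\lambda_k\langle u,v\rangle_E$ for every $v\in E$, then performing a single Abel summation and invoking the bijection $E\ni(\alpha',\beta')\mapsto\alpha'\in\R^n$ to conclude. This is more elementary: it avoids the min-max formulation, the projection onto $E_k$, and the separate treatment of $k=1$ versus $k\geqslant 2$. The paper's route has the mild conceptual advantage of staying close to the variational characterization used elsewhere (Propositions \ref{firstorder} and \ref{firstorder1}), but your algebraic unwinding is shorter and equally rigorous.
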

 
 \begin{proof} The first equation $\alpha_i+\beta_i=\alpha_{i+1}$ in \eqref{sysT} is nothing but the fact that $(\alpha,\beta)\in E$. Let us prove the second assertion, that comes from the minimisation of the Rayleigh quotient, as in the proof of Proposition \ref{firstorder1}. We can assume that $k\geqslant 2$ because the case of $k=1$ is given by  Proposition \ref{firstorder1}. Then we denote by $E_k$ the direct sum of the first $k$ eigenspaces associated to $\lambda_0,\lambda_1,\cdots,\lambda_{k-1}$  and use the min-max formula to write
 $$\lambda_k= \min_{u \in E_k^\bot, u\not = 0} \frac{Q(u)}{\|u\|_E^2}.$$
 Then for a fixed index $i\in\{1,\cdots,n\}$ we consider the element $w_i=(\partial_i, \partial_{i-1}-\partial_i)$, which belongs to the space $E$. To make sure that $w_i\in E_k^\bot$ we consider $p_i=\pi_{E_k}(w_i)$ so that   $w_i-p_i \in E_{k}^\bot$ and we can use it as a test function in the Rayleigh quotient. Therefore, if $u$ is a minimizer we must have, for all $t\in \R$, 
 $$\lambda_k =\frac{Q(u)}{\|u\|_E^2} \leqslant  \frac{Q(u+t (w_i-p_i))}{\|u+t (w_i-p_i)\|_E^2}.$$
 Since this must hold for all $t$, by a standard variational argument already used twice before we must have 
 $$\lambda_k \langle u, w_i-p_i \rangle_E = B(u,w_i-p_i),$$
 where $B$ is the bilinear form associated with $Q$. Or differently, with $u=(\alpha, \beta)$,
 \begin{eqnarray}
 \lambda_k \alpha_i -  \langle  u,p_i \rangle_E = B(u,w_i)-B(u,p_i)=\frac{\beta_{i-1}}{\ell_{i-1}} - \frac{\beta_i}{\ell_i}-B(u,p_i).\label{ident}
\end{eqnarray} 
  But now $\langle  u,p_i \rangle_E=0$ because  $p_i\in E_k$ and  $u \in E_k^\bot$. Similarly, we also have that  $B(u,p_i)=0$ because $u$ is an eigenfunction so that 
  $$B(u,p_i)= \langle  Mu,p_i \rangle_E=\lambda_k\langle  u,p_i \rangle_E=0.$$
  Returning to \eqref{ident}, we deduce that 
   \begin{eqnarray}
 \lambda_k \alpha_i =\frac{\beta_{i-1}}{\ell_{i-1}} - \frac{\beta_i}{\ell_i},
\end{eqnarray} 
which finishes the proof.
  \end{proof}

\subsection{Reilly inequality for polygons}


\

\

\begin{proof}[Proof of Proposition \ref{ReillyPolygon}] The proof follows the same scheme as the proof of Proposition \ref{reilly1}. Following the Definition \ref{deflambda1} we define the discreate measure 
 $$\mu_A:=\sum_{i=1}^k\delta_{A_i},$$
 and the vectorial function $${\bf H}(x)= \left(\frac{A_{i}-A_{i-1}}{|A_i-A_{i-1}|}+ \frac{A_{i}-A_{i+1}}{|A_i-A_{i+1}|}\right) {\bf 1}_{\{A_i\}}(x),$$
 in such a way that $\delta V_A= {\bf H}\mu_A$.
Since  $\mu_A$ has finite mass,    up to use a translation we can assume that
$$ \int_{\mathbb{R}^N}X \; d \mu_A=0.$$
 Then we take $X_i$ as a test function in the definition of $\lambda_1^A$ which yields,
 $$\lambda_1^A \int_{\mathbb{R}^N} (X_i)^2 \;d  \mu_A\leqslant \int |\nabla_P X_i|^2 \; dV(X,P)$$ 
 which after summation becomes
 \begin{eqnarray}
 \lambda_1^A \int_{\mathbb{R}^N} |X|^2  \;d \mu_A\leqslant \sum_{i=1}^{N}\int  |\nabla_P X_i|^2 \; dV(X,P). \label{ineq1}
 \end{eqnarray}
 Then we can use Cauchy-Schwarz (for vectors of $\mathbb{R}^N$), H\"older and  Hsiung-Minkowski \eqref{eq1} with $m=1$ and $M=\mathcal{H}^1(K_A)$ to obtain
\begin{eqnarray}
 \lambda_1^A \int_{\mathbb{R}^N} |X|^2  \;d  \mu_A &\leqslant& \sum_{i=1}^{N}\int |\nabla_P X_i|^2 \; dV(X,P)=  \int_{\mathbb{R}^N} X \cdot \nu \;d\|\delta V\|  \notag \\
 &=& \frac{\left( \int_{\mathbb{R}^N} X \cdot \nu \;d\|\delta V\|  \right)^2}{M} = \frac{\left(\int_{\mathbb{R}^N} X \cdot {\bf H}\;d \mu_A\right)^2}{M}  \label{eigen} \\
 &\leqslant &  \frac{ \left( \int_{\mathbb{R}^N} |X | |H | \;d \mu_A \right)^2}{M} \leqslant   \frac{\|X\|_{L^2(d\mu_A)}^2 \|{\bf H}\|_{L^2(d\mu_A)}^2 }{M} \notag
 \end{eqnarray}
 which proves \eqref{ineqRP}.
  \end{proof}

\subsection{Equality case for polygons}

\begin{proof}[Proof of Proposition \ref{Equality-Case}] If $A$ is a polygon that realizes the equality in \eqref{ineqRP}, then all the inequalities that have been used in the proof, are equalities. In particular the equality in Cauchy-Schwarz inequality says that $X$ must be colinear to $H_i$ at the point $A_i$. This implies the existence of a constant $c_i \in \R \setminus \{0\}$ such that 
\begin{eqnarray}
c_iA_i=H_i =\left(\frac{A_{i}-A_{i-1}}{|A_i-A_{i-1}|}+ \frac{A_{i}-A_{i+1}}{|A_i-A_{i+1}|}\right)\quad \quad \forall 1\leqslant i\leqslant n. \label{relation}
\end{eqnarray}
Let us fist prove that the polygon must be planar, in other words that  ${\rm dim} Vect\{ A_i\}=2$. To see this we pick an arbitrary $i$ and notice that \eqref{relation} implies 
$$A_i \in \textnormal{Vect}\{ A_i - A_{i-1} , A_i-A_{i+1}\}=:V.$$ But then since $A_{i+1}=A_i + (A_{i+1} -A_i)$ we deduce that $A_{i+1} \in V$. But since 
\begin{eqnarray}
c_{i+1}A_{i+1}=\left(\frac{A_{i+1}-A_{i}}{|A_{i+1}-A_{i}|}+ \frac{A_{i+1}-A_{i+2}}{|A_{i+1}-A_{i+2}|}\right) \notag,
\end{eqnarray}
we deduce that $A_{i+2}\in V$. We can continue this way recursively up to obtain that $A_i \in V$ for all $i$, thus we have proved that the polygon must be planar. 

As a result, we can assume for the rest of the proof that $N=2$. We know from the equality case in \eqref{eigen}, that for all $1\leqslant k \leqslant 2$, the coordinate function $X_k$ must be an eigenfunction.  In particular the infimum in the definition of $\lambda_1^A$\eqref{lambdaP}, is achieved, and is a minimum.  Moreover, the coordinate system of $\R^2$ being arbitrary, we deduce that for any fixed  vector $\nu \in \R^2$ with $|\nu|=1$, the projection $u:x\mapsto \langle \nu, x \rangle$ must be an eigenfunction. In particular from Proposition \ref{firstorder1} we know that  while   $T_i$ being the parameterization of $[A_i,A_{i+1}]$ defined by $T_i(t) := (1-t)A_i + tA_{i+1}$, then there exists some constants $\alpha_i,\beta_i \in \R$ such that 
$$
\left\{
 \begin{array}{ll}
u\circ T_i(t)=\alpha_i +t \beta_i, \quad  &\textnormal{ for } t\in[0,1] \textnormal{ and }  1\leqslant i \leqslant n,\\
\alpha_i+\beta_i=\alpha_{i+1},\\\displaystyle 
\lambda_1\alpha_i=\left(\frac{\beta_{i-1}}{\ell_{i-1}} - \frac{\beta_i}{\ell_i}\right), \quad &\textnormal{for } 1\leqslant i  \leqslant n.
 \end{array}
  \right. 
 $$
 where $\ell_i:=|A_{i+1}-A_i|$ and $\alpha_{n+1}=\alpha_1$ and $\beta_{n+1}=\beta_1$.  We consider  a given segment $[A_{i},A_{i+1}]$   and we apply the above with an arbitrary vector $\nu$ with $|\nu|=1$. Then
$$u\circ{T_i(t)}=\langle  \nu , (1-t)A_i + tA_{i+1} \rangle =  \langle \nu , A_i \rangle + t\langle \nu , A_{i+1}-A_i \rangle$$
so $\alpha_i =  \langle \nu , A_i \rangle \text{ and } \beta_i=\langle \nu , A_{i+1}-A_i \rangle.$
The relation $$\lambda_1\alpha_i=\frac{\beta_{i-1}}{\ell_{i-1}} - \frac{\beta_i}{\ell_i}$$ says 
$$\lambda_1 \langle \nu , A_i \rangle= \left\langle \nu ,  \frac{A_{i}-A_{i-1}}{|A_{i}-A_{i-1}|} + \frac{A_i - A_{i+1}}{|A_{i}-A_{i+1}|} \right\rangle=\langle \nu , H_i\rangle.$$
Remembering \eqref{relation}, and since $\nu$ is arbitrary, we actually get 
$$c_i=\lambda_1, \quad \textnormal{for }1\leqslant i\leqslant n. $$
A simple computation revels that  (see Proposition \ref{propPoly})
$$|H_i|^2=2(1-\cos(\theta_i))$$
where $\theta_i$ is the oriented angle between $\frac{A_{i}-A_{i-1}}{|A_{i}-A_{i-1}|}$ and $\frac{A_i - A_{i+1}}{|A_{i}-A_{i+1}|}$.
Since $\lambda_1A_i=H_i$ we deduce that 
\begin{eqnarray}
|A_i|^2=\frac{2(1-\cos(\theta_i))}{\lambda_1^2}.\label{normrelation}
\end{eqnarray}
Now we choose a particular $\nu$, which we decide to be equal to 
$$\nu:=\left(\frac{A_{i+1}-A_{i}}{|A_{i+1}-A_i|}\right)^{\bot}.$$ 
Here the symbol $\perp$ means the rotation by 90 degrees in the positive sense. Then 
\begin{eqnarray}
\lambda_1 \langle \nu , A_i \rangle= \left\langle \nu ,  \frac{A_{i}-A_{i-1}}{|A_{i}-A_{i-1}|} + \frac{A_i - A_{i+1}}{|A_{i}-A_{i+1}|} \right\rangle  =\left\langle \nu ,  \frac{A_{i}-A_{i-1}}{|A_i-A_{i-1}|} \right\rangle, \label{angle1}
\end{eqnarray}
because we decided to chose $\nu$ orthogonal to $A_i-A_{i+1}$. And we also have
\begin{eqnarray}
\lambda_1 \langle \nu , A_{i+1} \rangle= \left\langle \nu ,  \frac{A_{i+1}-A_{i}}{|A_{i+1}-A_{i}|} + \frac{A_{i+1} - A_{i+2}}{|A_{i+1}-A_{i+2}|} \right\rangle=\left\langle \nu ,  \frac{A_{i+1}-A_{i+2}}{|A_{i+1}-A_{i+2}|} \right\rangle.\label{angle2}
\end{eqnarray}
Now we can write, using again the orthogonality of $\nu$,
$$\langle \nu , A_{i+1} \rangle= \langle \nu , A_i \rangle +\langle \nu , A_{i+1}-A_i \rangle=\langle \nu,A_i \rangle.$$
Returning back to \eqref{angle1} and \eqref{angle2}, we have proved that 
\begin{align}\label{orth}
    \left\langle \nu ,  \frac{A_{i}-A_{i-1}}{|A_i-A_{i-1}|} \right\rangle=\left\langle \nu ,  \frac{A_{i+1}-A_{i+2}}{|A_{i+1}-A_{i+2}|} \right\rangle.
\end{align}
Now, we will prove that with this equality, there are only four types of polygons that can be constructed. We set
\begin{align*}
    w_i := \frac{A_{i+1}-A_{i}}{|A_{i+1}-A_{i}|},
\end{align*}
it is easy to see that if $\lambda_1A_i=H_i$, then the line $\R A_i$ is the bissector of the angle $\theta_i$, and the equality (\ref{orth}) implies either $\theta_{i+1} = \theta_i$ or $\theta_{i+1}= \pi -\theta_i$.
\begin{center}

\tikzset{every picture/.style={line width=0.75pt}} 

\tikzset{every picture/.style={line width=0.75pt}} 

\begin{tikzpicture}[x=0.75pt,y=0.75pt,yscale=-1,xscale=1]

\draw   (43.63,117.41) -- (94.63,45.86) -- (196.58,44.63) ;
\draw   (196.58,44.63) -- (264.97,44.63) -- (306,104.4) ;
\draw    (180,192.4) ;
\draw [shift={(180,192.4)}, rotate = 0] [color={rgb, 255:red, 0; green, 0; blue, 0 }  ][fill={rgb, 255:red, 0; green, 0; blue, 0 }  ][line width=0.75]      (0, 0) circle [x radius= 2.01, y radius= 2.01]   ;
\draw  [dash pattern={on 0.84pt off 2.51pt}]  (77,15.4) -- (180,192.4) ;
\draw  [dash pattern={on 0.84pt off 2.51pt}]  (180,192.4) -- (286,8.4) ;
\draw  [dash pattern={on 0.84pt off 2.51pt}]  (180,192.4) -- (463,26.29) ;
\draw   (264.97,44.63) -- (431.15,44.63) -- (389.33,101.4) ;
\draw    (94.63,45.86) ;
\draw [shift={(94.63,45.86)}, rotate = 0] [color={rgb, 255:red, 0; green, 0; blue, 0 }  ][fill={rgb, 255:red, 0; green, 0; blue, 0 }  ][line width=0.75]      (0, 0) circle [x radius= 2.01, y radius= 2.01]   ;
\draw [color={rgb, 255:red, 208; green, 2; blue, 27 }  ,draw opacity=1 ]   (264.97,44.63) -- (242.15,12.07) ;
\draw [shift={(241,10.43)}, rotate = 54.98] [color={rgb, 255:red, 208; green, 2; blue, 27 }  ,draw opacity=1 ][line width=0.75]    (10.93,-3.29) .. controls (6.95,-1.4) and (3.31,-0.3) .. (0,0) .. controls (3.31,0.3) and (6.95,1.4) .. (10.93,3.29)   ;
\draw [color={rgb, 255:red, 144; green, 19; blue, 254 }  ,draw opacity=1 ]   (264.97,44.63) -- (265,6.43) ;
\draw [shift={(265,4.43)}, rotate = 90.04] [color={rgb, 255:red, 144; green, 19; blue, 254 }  ,draw opacity=1 ][line width=0.75]    (10.93,-3.29) .. controls (6.95,-1.4) and (3.31,-0.3) .. (0,0) .. controls (3.31,0.3) and (6.95,1.4) .. (10.93,3.29)   ;
\draw  [dash pattern={on 4.5pt off 4.5pt}]  (43.63,117.41) -- (26,142.29) ;
\draw  [dash pattern={on 4.5pt off 4.5pt}]  (306,104.4) -- (324,129.29) ;
\draw  [dash pattern={on 4.5pt off 4.5pt}]  (389.33,101.4) -- (372,126.29) ;
\draw [color={rgb, 255:red, 208; green, 2; blue, 27 }  ,draw opacity=1 ]   (431.15,44.63) -- (454.83,11.91) ;
\draw [shift={(456,10.29)}, rotate = 125.89] [color={rgb, 255:red, 208; green, 2; blue, 27 }  ,draw opacity=1 ][line width=0.75]    (10.93,-3.29) .. controls (6.95,-1.4) and (3.31,-0.3) .. (0,0) .. controls (3.31,0.3) and (6.95,1.4) .. (10.93,3.29)   ;
\draw [color={rgb, 255:red, 144; green, 19; blue, 254 }  ,draw opacity=1 ]   (431.15,44.63) -- (431.18,6.43) ;
\draw [shift={(431.18,4.43)}, rotate = 90.04] [color={rgb, 255:red, 144; green, 19; blue, 254 }  ,draw opacity=1 ][line width=0.75]    (10.93,-3.29) .. controls (6.95,-1.4) and (3.31,-0.3) .. (0,0) .. controls (3.31,0.3) and (6.95,1.4) .. (10.93,3.29)   ;
\draw    (264.97,44.63) ;
\draw [shift={(264.97,44.63)}, rotate = 0] [color={rgb, 255:red, 0; green, 0; blue, 0 }  ][fill={rgb, 255:red, 0; green, 0; blue, 0 }  ][line width=0.75]      (0, 0) circle [x radius= 2.01, y radius= 2.01]   ;
\draw    (431.15,44.63) ;
\draw [shift={(431.15,44.63)}, rotate = 0] [color={rgb, 255:red, 0; green, 0; blue, 0 }  ][fill={rgb, 255:red, 0; green, 0; blue, 0 }  ][line width=0.75]      (0, 0) circle [x radius= 2.01, y radius= 2.01]   ;
\draw    (90,63.29) .. controls (104,65.29) and (108,63.29) .. (117,51.29) ;
\draw    (246,51.29) .. controls (254,63.29) and (255,65.29) .. (273,67.29) ;
\draw    (388,49.29) .. controls (388,62.29) and (390,69.29) .. (404,74.29) ;
\draw [color={rgb, 255:red, 144; green, 19; blue, 254 }  ,draw opacity=1 ]   (94.63,45.86) -- (94.66,7.66) ;
\draw [shift={(94.66,5.66)}, rotate = 90.04] [color={rgb, 255:red, 144; green, 19; blue, 254 }  ,draw opacity=1 ][line width=0.75]    (10.93,-3.29) .. controls (6.95,-1.4) and (3.31,-0.3) .. (0,0) .. controls (3.31,0.3) and (6.95,1.4) .. (10.93,3.29)   ;
\draw [color={rgb, 255:red, 208; green, 2; blue, 27 }  ,draw opacity=1 ]   (94.63,45.86) -- (116.89,12.66) ;
\draw [shift={(118,11)}, rotate = 123.84] [color={rgb, 255:red, 208; green, 2; blue, 27 }  ,draw opacity=1 ][line width=0.75]    (10.93,-3.29) .. controls (6.95,-1.4) and (3.31,-0.3) .. (0,0) .. controls (3.31,0.3) and (6.95,1.4) .. (10.93,3.29)   ;

\draw (159,194.4) node [anchor=north west][inner sep=0.75pt]    {$0$};
\draw (63,34.4) node [anchor=north west][inner sep=0.75pt]    {$A_{i}$};
\draw (196,-1.6) node [anchor=north west][inner sep=0.75pt]  [color={rgb, 255:red, 208; green, 2; blue, 27 }  ,opacity=1 ]  {$-w_{i+1}$};
\draw (271,-0.6) node [anchor=north west][inner sep=0.75pt]  [color={rgb, 255:red, 144; green, 19; blue, 254 }  ,opacity=1 ]  {$\nu $};
\draw (457,-2.6) node [anchor=north west][inner sep=0.75pt]  [color={rgb, 255:red, 208; green, 2; blue, 27 }  ,opacity=1 ]  {$-w_{i+1}$};
\draw (413,0.4) node [anchor=north west][inner sep=0.75pt]  [color={rgb, 255:red, 144; green, 19; blue, 254 }  ,opacity=1 ]  {$\nu $};
\draw (277,24.4) node [anchor=north west][inner sep=0.75pt]    {$A_{i+1}$};
\draw (393,24.4) node [anchor=north west][inner sep=0.75pt]    {$A_{i+1}$};
\draw (119,54.69) node [anchor=north west][inner sep=0.75pt]    {$\theta _{i}$};
\draw (217,54.4) node [anchor=north west][inner sep=0.75pt]    {$\theta _{i+1}$};
\draw (124,-1.6) node [anchor=north west][inner sep=0.75pt]  [color={rgb, 255:red, 208; green, 2; blue, 27 }  ,opacity=1 ]  {$w_{i-1}$};
\draw (350,52.4) node [anchor=north west][inner sep=0.75pt]    {$\theta _{i+1}$};
\draw (78,0.4) node [anchor=north west][inner sep=0.75pt]  [color={rgb, 255:red, 144; green, 19; blue, 254 }  ,opacity=1 ]  {$\nu $};

\end{tikzpicture}

\end{center}
\begin{center}
    Figure 1. The two possible cases of $A_{i+1}$.
\end{center}
We consider the oriented angles $\alpha_i$ between $A_i$ and $A_{i+1}$. The sum of the angles in a triangle being $\pi$, we know that $\alpha_i$ can take three types of values, $\theta_1$, $\pi-\theta_1$ or $\pi/2$. Furthermore, depending on the construction of $A_{i+1}$, we know that the values of $\alpha_i$ follow the graph below
\begin{center}

\tikzset{every picture/.style={line width=0.75pt}} 

\begin{tikzpicture}[x=0.75pt,y=0.75pt,yscale=-1,xscale=1]

\draw   (115,131) .. controls (115,117.19) and (126.19,106) .. (140,106) .. controls (153.81,106) and (165,117.19) .. (165,131) .. controls (165,144.81) and (153.81,156) .. (140,156) .. controls (126.19,156) and (115,144.81) .. (115,131) -- cycle ;
\draw   (236,69) .. controls (236,55.19) and (247.19,44) .. (261,44) .. controls (274.81,44) and (286,55.19) .. (286,69) .. controls (286,82.81) and (274.81,94) .. (261,94) .. controls (247.19,94) and (236,82.81) .. (236,69) -- cycle ;
\draw   (235,189) .. controls (235,175.19) and (246.19,164) .. (260,164) .. controls (273.81,164) and (285,175.19) .. (285,189) .. controls (285,202.81) and (273.81,214) .. (260,214) .. controls (246.19,214) and (235,202.81) .. (235,189) -- cycle ;
\draw   (356,131) .. controls (356,117.19) and (367.19,106) .. (381,106) .. controls (394.81,106) and (406,117.19) .. (406,131) .. controls (406,144.81) and (394.81,156) .. (381,156) .. controls (367.19,156) and (356,144.81) .. (356,131) -- cycle ;
\draw    (140,106) .. controls (163.76,71.35) and (178.7,66.1) .. (234.3,68.91) ;
\draw [shift={(236,69)}, rotate = 183.01] [color={rgb, 255:red, 0; green, 0; blue, 0 }  ][line width=0.75]    (10.93,-3.29) .. controls (6.95,-1.4) and (3.31,-0.3) .. (0,0) .. controls (3.31,0.3) and (6.95,1.4) .. (10.93,3.29)   ;
\draw    (286,69) .. controls (352.98,67.03) and (353.98,71.85) .. (379.8,104.49) ;
\draw [shift={(381,106)}, rotate = 231.55] [color={rgb, 255:red, 0; green, 0; blue, 0 }  ][line width=0.75]    (10.93,-3.29) .. controls (6.95,-1.4) and (3.31,-0.3) .. (0,0) .. controls (3.31,0.3) and (6.95,1.4) .. (10.93,3.29)   ;
\draw    (381,156) .. controls (363.18,190.65) and (350.26,192.96) .. (286.93,189.12) ;
\draw [shift={(285,189)}, rotate = 3.52] [color={rgb, 255:red, 0; green, 0; blue, 0 }  ][line width=0.75]    (10.93,-3.29) .. controls (6.95,-1.4) and (3.31,-0.3) .. (0,0) .. controls (3.31,0.3) and (6.95,1.4) .. (10.93,3.29)   ;
\draw    (235,189) .. controls (166.4,189) and (155.42,185.16) .. (140.89,157.71) ;
\draw [shift={(140,156)}, rotate = 62.65] [color={rgb, 255:red, 0; green, 0; blue, 0 }  ][line width=0.75]    (10.93,-3.29) .. controls (6.95,-1.4) and (3.31,-0.3) .. (0,0) .. controls (3.31,0.3) and (6.95,1.4) .. (10.93,3.29)   ;
\draw    (238,174) .. controls (214.48,160.28) and (228.42,113.91) .. (240.28,88.53) ;
\draw [shift={(241,87)}, rotate = 115.64] [color={rgb, 255:red, 0; green, 0; blue, 0 }  ][line width=0.75]    (10.93,-3.29) .. controls (6.95,-1.4) and (3.31,-0.3) .. (0,0) .. controls (3.31,0.3) and (6.95,1.4) .. (10.93,3.29)   ;
\draw    (278,89) .. controls (304.32,103.62) and (287.87,148.67) .. (279.62,171.3) ;
\draw [shift={(279,173)}, rotate = 289.98] [color={rgb, 255:red, 0; green, 0; blue, 0 }  ][line width=0.75]    (10.93,-3.29) .. controls (6.95,-1.4) and (3.31,-0.3) .. (0,0) .. controls (3.31,0.3) and (6.95,1.4) .. (10.93,3.29)   ;
\draw    (121,149) .. controls (47.87,142.17) and (93.58,122.04) .. (117.23,116.4) ;
\draw [shift={(119,116)}, rotate = 167.74] [color={rgb, 255:red, 0; green, 0; blue, 0 }  ][line width=0.75]    (10.93,-3.29) .. controls (6.95,-1.4) and (3.31,-0.3) .. (0,0) .. controls (3.31,0.3) and (6.95,1.4) .. (10.93,3.29)   ;
\draw    (401,145) .. controls (459.11,140.07) and (441.55,111.86) .. (400.87,111.98) ;
\draw [shift={(399,112)}, rotate = 358.64] [color={rgb, 255:red, 0; green, 0; blue, 0 }  ][line width=0.75]    (10.93,-3.29) .. controls (6.95,-1.4) and (3.31,-0.3) .. (0,0) .. controls (3.31,0.3) and (6.95,1.4) .. (10.93,3.29)   ;

\draw (246,59.4) node [anchor=north west][inner sep=0.75pt]    {$\pi /2$};
\draw (243,182.4) node [anchor=north west][inner sep=0.75pt]    {$\pi /2$};
\draw (129,124.4) node [anchor=north west][inner sep=0.75pt]    {$\theta _{0}$};
\draw (357,121.4) node [anchor=north west][inner sep=0.75pt]    {$\pi -\theta _{0}$};

\end{tikzpicture}

\end{center}
\begin{center}
    Figure 2. Next possible values of $\alpha_i$.
\end{center}
The sum of $\alpha_i$ is equal to $2\pi$, so there exists $j,k,\ell \in \N$ such that
\begin{align*}
    j \theta_1 + k(\pi - \theta_1) + \ell\pi/2 = 2\pi,
\end{align*}
with $j \geqslant 1$. We distinguish the possible triplets $(j,k,\ell)$ based on $\ell$.

\emph{Case} $\ell=0$. Then $k=0$ because if $\alpha_{i}=\pi- \theta_0$, there exists $i_0$ such that $\alpha_{i_0}= \pi/2$ which is absurd. But this relation actually proves that $\theta_i =\theta_1$ since the index $i$ was arbitrary. Together with (\ref{relation}), we deduce that the points $A_i$ lie on a circle, and since
all the angles are equal we have a regular polygon.

\emph{Case} $\ell=1$. We started the construction of the polygon by $\alpha_i=\theta_1$ and we must to finish with $\theta_1$. Necessarily, this means that $\ell\geqslant 2$ which is absurd. This case is empty.

\emph{Case} $\ell=2$. The resulting equation is 
\begin{align*}
    (j-k)\theta_1 + k\pi = \pi,
\end{align*}
therefore we have two cases. The first one is $j=k=1$ and $n=4$. Using the fact that $\theta_1 = \theta_2$, and $\theta_3 = \theta_4$, with (\ref{relation}), we deduce that the points $A_1, A_2$ lie on a circle, and $A_3,A_4$ on another one. This characterizes the trapezoid. The second case is $k=0$. There is a rank $i_0$ such that $\theta_{i_0} = \pi-\theta_1$ and $\theta_i = \theta_1$ for all $i\ne i_0$. The equation (\ref{relation}) yields that all the points $A_i$, $i\ne i_0$, lie on a circle. This corresponds to the fake regular polygon.
\end{proof}

\subsection{Computation of $\lambda_1^A$ for a regular polygon.}

\medskip\begin{proposition}\label{Spectrum-Equilateral-Gon} Let $A=\{A_i\}_{1\leqslant i \leqslant n+1}$ be a polygon in $\R^N$ as in Definition \ref{defPolygon}. Assume that $A$ is equilateral and $L$ is the perimeter of $A$. Then  the spectrum of $A$ is 
$$\textnormal{Spec}(A)=\left\{\frac{4n}{L}\sin^2\frac{k\pi}{n}\ ; \ k\in\left\{0,\cdots\lfloor\dfrac{n}{2}\rfloor\right\}\right\}.$$
In particular,
$$\lambda_1^A=\frac{4n}{L}\sin^2\left(\frac{\pi}{n}\right).$$
\end{proposition}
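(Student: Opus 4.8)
The plan is to reduce everything to finite-dimensional linear algebra via Corollary~\ref{spectrumfinite}. Since $A$ is equilateral, each side has length $\ell:=L/n$, so the quadratic form appearing there becomes $Q(\alpha,\beta)=\frac{n}{L}\sum_{i=1}^n\beta_i^2$. Moreover the defining relation of $E$ reads $\beta_i=\alpha_{i+1}-\alpha_i$ (indices taken mod $n$, with $\alpha_{n+1}=\alpha_1$), so the projection $(\alpha,\beta)\mapsto(\alpha_1,\dots,\alpha_n)$ identifies $E$ with $\R^n$, the norm $\|\cdot\|_E$ with the standard Euclidean norm, and $Q$ with the form $\alpha\mapsto\frac{n}{L}\sum_{i=1}^n(\alpha_{i+1}-\alpha_i)^2$. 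Consequently the symmetric operator $M$ of Corollary~\ref{spectrumfinite} is, in the standard basis of $\R^n$, equal to $\frac{n}{L}\Delta_n$, where $\Delta_n$ is the Laplacian of the cycle graph $\mathbb{Z}/n\mathbb{Z}$, i.e. the circulant matrix with $2$ on the diagonal and $-1$ on the two cyclic off-diagonals.

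Next I would read off the eigenvalue equation. By Proposition~\ref{firstorder2}, $\alpha$ is an eigenvector of $M$ for the eigenvalue $\lambda$ if and only if $\lambda\alpha_i=\ell^{-1}(\beta_{i-1}-\beta_i)$ for every $i$; substituting $\beta_i=\alpha_{i+1}-\alpha_i$ yields
\[
\ell\,\lambda\,\alpha_i=2\alpha_i-\alpha_{i-1}-\alpha_{i+1},\qquad 1\leqslant i\leqslant n,
\]
which is precisely the eigenvalue equation for $\Delta_n$ with eigenvalue $\ell\lambda=\tfrac{L}{n}\lambda$. The spectrum of $\Delta_n$ is classical: for each $k\in\{0,1,\dots,n-1\}$ the vector $(\omega^{kj})_{1\leqslant j\leqslant n}$ with $\omega:=e^{2\pi\sqrt{-1}/n}$ is an eigenvector with eigenvalue $2-2\cos(2\pi k/n)=4\sin^2(k\pi/n)$, and these vectors form a basis, so the list is exhaustive. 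Translating back, the eigenvalues of $M$ are exactly the numbers $\frac{n}{L}\cdot 4\sin^2(k\pi/n)=\frac{4n}{L}\sin^2\frac{k\pi}{n}$ for $k=0,\dots,n-1$. Since $\sin^2\frac{(n-k)\pi}{n}=\sin^2\frac{k\pi}{n}$, the distinct values are those for $k\in\{0,\dots,\lfloor n/2\rfloor\}$, which is the announced description of $\textnormal{Spec}(A)$. Finally, since $t\mapsto\sin^2 t$ is increasing on $[0,\pi/2]$, the smallest nonzero value is the one for $k=1$, giving $\lambda_1^A=\frac{4n}{L}\sin^2\frac{\pi}{n}$; note that the $k=0$ eigenvector is constant, hence lies in the kernel $F$ of Corollary~\ref{spectrumfinite}, while every other eigenvector is $\|\cdot\|_E$-orthogonal to it and therefore automatically satisfies $\sum_i\alpha_i=0$.

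There is no genuine difficulty here: the only points requiring a little care are the clean identification of $E$ with $\R^n$ together with the verification that the bilinear form associated with $Q$ has matrix $\frac{n}{L}\Delta_n$ in the standard basis (so that indeed $M=\frac{n}{L}\Delta_n$), and the bookkeeping of multiplicities needed to be sure that the finite list $\{\frac{4n}{L}\sin^2\frac{k\pi}{n}:0\leqslant k\leqslant\lfloor n/2\rfloor\}$ really accounts for all $n$ eigenvalues, namely multiplicity $2$ for $0<k<n/2$ and multiplicity $1$ for $k=0$ and, when $n$ is even, for $k=n/2$.
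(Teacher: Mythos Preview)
Your argument is correct and takes a genuinely different route from the paper's proof. The paper does not identify the operator $M$ of Corollary~\ref{spectrumfinite} with the cycle-graph Laplacian; instead it rewrites the first-order system of Proposition~\ref{firstorder2} as a two-dimensional linear recurrence
\[
\begin{pmatrix}\alpha_{i+1}\\\beta_{i+1}\end{pmatrix}
=\begin{pmatrix}1&1\\-\Lambda&1-\Lambda\end{pmatrix}
\begin{pmatrix}\alpha_i\\\beta_i\end{pmatrix},
\qquad \Lambda=\lambda\ell,
\]
and then looks for those $\Lambda$ for which this transfer matrix has an $n$-th root of unity as an eigenvalue (so that the orbit closes up after $n$ steps). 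This leads to a case analysis on the discriminant $\Lambda(\Lambda-4)$ which eventually yields $\Lambda=4\sin^2(k\pi/n)$. Your approach is more economical: once one recognises that, after eliminating $\beta$, the eigenvalue problem is exactly $(\Delta_n\alpha)_i=\ell\lambda\,\alpha_i$ on $\mathbb{Z}/n\mathbb{Z}$, the spectrum follows at once from the standard discrete Fourier diagonalisation of circulant matrices, and the multiplicities come for free. The transfer-matrix method, on the other hand, is what the paper reuses later for non-equilateral polygons (the trapeze in Proposition~\ref{calculLambda1}), where the side lengths vary and no single circulant structure is available; so each method has its natural domain.
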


\begin{proof} Since $A$ is equilateral, for any $i\in\{1,\cdots,n\}$, $|A_{i+1}-A_i|=\ell=L/n$. Let $u$ be an eigenfunction for an eigenvalue $\lambda$. From Proposition \ref{firstorder1} we know that if  $T_i$ is the parameterization of $[A_i,A_{i+1}]$ defined by $T_i(t) := (1-t)A_i + tA_{i+1}$, then there exists some constants $\alpha_i,\beta_i \in \R$ such that $u\circ T_i(t)=\alpha_i +t \beta_i$ for $t\in[0,1]$ and $ 1\leqslant i \leqslant n$, and 
$$
\left\{
 \begin{array}{ll}
\alpha_i+\beta_i=\alpha_{i+1},\\
\lambda\alpha_{i+1}=\dfrac{1}{\ell}(\beta_i-\beta_{i+1}), \quad \textnormal{ for } 1\leqslant i  \leqslant n,
 \end{array}
  \right. 
 $$
with $\alpha_{n+1}=\alpha_1$ and $\beta_{n+1}=\beta_1$. Note that this system above is equivalent to
\begin{equation}\label{equivalence-system}
 \begin{pmatrix}
\alpha_{i+1}\\\beta_{i+1}
\end{pmatrix}=M\begin{pmatrix}
\alpha_i\\\beta_i
\end{pmatrix}:=\begin{pmatrix}
1&1\\-\Lambda&1-\Lambda
\end{pmatrix}\begin{pmatrix}
\alpha_i\\\beta_i
\end{pmatrix},\quad \textnormal{ for }1\leqslant i  \leqslant n,
\end{equation}
with $\Lambda = \lambda \ell$. Moreover, $$M^n\begin{pmatrix}
\alpha_1\\\beta_1
\end{pmatrix}=\begin{pmatrix}
\alpha_1\\\beta_1
\end{pmatrix}.$$ Then $1$ is an eigenvalue of $M^n$ and it follows that the spectrum of $M$ contains an $n$-th root of $1$. A straightforward computation shows that the characteristic polynom of $M$ is $P=(X-1)^2+\Lambda(X-1)+\Lambda$ and the discriminant of $Y^2+\Lambda Y+\Lambda$ is $\Delta=\Lambda(\Lambda-4)$. Look for the conditions on $\Lambda$ so that the spectrum of $M$ contains an $n$-th root of $1$. 

\emph{Case} $\Lambda=0$. Imediatly, $\lambda=0$ and $\beta_i$ are constant. Then $\alpha_{n+1}=\alpha_1+n\beta_1=\alpha_1$, $\beta_i=0$ and $\alpha_i$ are constant. It follows that $u$ is constant. Conversely, a constant function is an eigenfunction for $\lambda=0$. 

 \emph{Case} $0<\Lambda<4$. One has $\Delta<0$ and the roots of $P$ are $x^{\pm}=(2-\Lambda)/2\pm i\sqrt{\Lambda(4-\Lambda)}/2$. It is easy to prove that $|x^{\pm}|=1$ and $x^+$ or $x^-=\overline{x^+}$ is an $n$-th root of $1$ if and only if $x^+$ or $\overline{x^+}=\exp(i2k\pi/n)$ for $1 \leqslant k \leqslant n-1$, in fact $k\neq0$ since $\Lambda\neq0$. This is  equivalent to say that $(2-\Lambda)/2=\cos(2k\pi/n)$ and $\Lambda=4\sin^2(k\pi/n)$.  From this we obtain that $$\lambda=\dfrac{4n}{L}\sin^2\dfrac{k\pi}{n}.$$ 
 
\emph{Case }$\Lambda=4$. We have $x=-1$ and $x$ is an $n$-th root of $1$ if and only if $n$ is even. In this case $$\lambda=\dfrac{4n}{L}=\dfrac{4n}{L}\sin^2\dfrac{(n/2)\pi}{n}.$$

\emph{Case } $\Lambda>4$. The roots $x^{\pm}\in\R$. If $x^{\pm}$ is an $n$-th root of $1$ then $x^{\pm}=\pm 1$ which is impossible. Thus  we have proved that the spectrum of $A$ is including in 
$$\textnormal{Spec}(A)\subset \left\{\frac{4n}{L}\sin^2\frac{k\pi}{n}\ \vert \ 0\leqslant k \leqslant n-1\right\}=\left\{\frac{4n}{L}\sin^2\frac{k\pi}{n}\ \vert\ 0 \leqslant k \leqslant \lfloor\dfrac{n}{2}\rfloor\right\}.$$
Conversely, let $k\in\{1,\cdots,n-1\}$, $\lambda=(4n/L)\sin^2(k\pi/n)$ and $\Lambda=\lambda\ell$ (the case $k=0$ corresponds to $\lambda=0$ already discussed above). Then $\exp(i2k\pi/n)$ is an eigenvalue of $M$ and $1$ is an eigenvalue of $M^n$. Let $(
\alpha_1,\beta_1)$ be an eigenvector of $M^n$ and for any $i\in\{2,\cdots,n+1\}$ define $$\begin{pmatrix}
\alpha_i\\\beta_i
\end{pmatrix}=M^{i-1}\begin{pmatrix}
\alpha_1\\\beta_1
\end{pmatrix}.$$ Then for any $i\in\{1,\cdots,n\}$, $$ \begin{pmatrix}
\alpha_{i+1}\\\beta_{i+1}
\end{pmatrix}=M\begin{pmatrix}
\alpha_i\\\beta_i
\end{pmatrix}.$$ If we define a function $u$ as above with scalar $\alpha_i$ and $\beta_i$, the equivalence \ref{equivalence-system} allows us to say $u$ is an eigenfunction for $\lambda=\frac{4n}{L}\sin^2\frac{k\pi}{n}$. Consequently, $$\textnormal{Spec}(A)=\left\{\frac{4n}{L}\sin^2\frac{k\pi}{n}\ ; \ k\in\left\{0,\cdots\lfloor\dfrac{n}{2}\rfloor\right\}\right\}.$$
\end{proof}


 \subsection{Computation of $\lambda_1^A$ for a trapeze.}

 In this section we compute the spectrum for a regular trapeze as defined below.
 \medskip\begin{definition}\label{definitionT} Let $\theta \in (0,\pi/2)$ be a given angle. We define the trapeze $T_\theta$ as being the polygon with the following vertices (see Figure 3)
 \begin{align*}
     A_1&=(\cos(\theta), \sin(\theta) ),  \quad &A_2&=(-\cos(\theta), \sin(\theta) ),\\
     A_3&=(-\tan(\theta)\sin(\theta) , -\tan(\theta)\sin(\theta)), \quad &A_4&=(\tan(\theta)\sin(\theta) , -\tan(\theta)\sin(\theta)).
 \end{align*}
 \end{definition}
 
 \medskip\begin{proposition} \label{calculLambda1}Let $T_\theta$ be the trapeze as defined in Definition \ref{definitionT}.  Then the spectrum of $T_\theta$ is given by the following two non trivial eigenvalues:
 $$\lambda_1^A(T_\theta)=2\cos(\theta), \textnormal{ and }
\lambda_2^A(T_\theta)=\frac{1}{\cos(\theta)\sin(\theta)^2}.$$
 Moreover, $\lambda_1^A(T_\theta)$ is double, and $\lambda_2^A(T_\theta)$ is simple.
 \end{proposition}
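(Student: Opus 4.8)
The plan is to read off $\mathrm{Spec}(T_\theta)$ from the finite–dimensional model of Corollary~\ref{spectrumfinite}, using the mirror symmetry of $T_\theta$ to block–diagonalize the operator $M$. \emph{Step 1 (side lengths and set-up).} From the coordinates in Definition~\ref{definitionT} one computes the four side lengths $\ell_i:=|A_{i+1}-A_i|$. The reflection $\sigma\colon(x,y)\mapsto(-x,y)$ maps $T_\theta$ onto itself, exchanging $A_1\leftrightarrow A_2$ and $A_3\leftrightarrow A_4$; in particular the two slanted sides $[A_2,A_3]$ and $[A_4,A_1]$ have equal length, and a direct computation gives $\ell_1=2\cos\theta$ (top side), $\ell_3=2\tan\theta\sin\theta$ (bottom side) and $\ell_2=\ell_4=1/\cos\theta$ (slants). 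By Corollary~\ref{spectrumfinite} together with Proposition~\ref{firstorder2}, $\mathrm{Spec}(T_\theta)$ is the spectrum of the symmetric operator $M$ on the $4$-dimensional space $E$, i.e.\ the set of $\lambda$ for which the system~\eqref{sysT} has a nonzero solution $(\alpha_i,\beta_i)_{i=1}^4$.

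\emph{Step 2 (symmetry reduction and the symmetric block).} The pullback $u\mapsto u\circ\sigma$ induces the orthogonal involution $\sigma^{*}\colon(\alpha_1,\alpha_2,\alpha_3,\alpha_4)\mapsto(\alpha_2,\alpha_1,\alpha_4,\alpha_3)$ on $E$, and since both the Dirichlet form $Q$ and the norm $\|\cdot\|_E$ of Corollary~\ref{spectrumfinite} are $\sigma$-invariant, $\sigma^{*}$ commutes with $M$. Hence $E=E^+\oplus E^-$, with $E^\pm$ the $(\pm1)$-eigenspaces of $\sigma^{*}$, each $2$-dimensional, and $M$ restricts to each. The constants lie in $E^+$, so the trivial eigenvalue $\lambda_0=0$ sits in $E^+$, and the remaining eigenvalue of $M|_{E^+}$ together with the two eigenvalues of $M|_{E^-}$ account for the three nonzero eigenvalues. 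For $E^+$ one has $\alpha_1=\alpha_2=:a$, $\alpha_3=\alpha_4=:b$, hence $\beta_1=\beta_3=0$ and $\beta_2=-\beta_4=b-a$; plugging into~\eqref{sysT} and using $\ell_2=\ell_4$, the four equations collapse to $\lambda a=(a-b)/\ell_2$ and $\lambda b=(b-a)/\ell_2$, so $\lambda(a+b)=0$ and thus either $\lambda=0$ (constants) or $b=-a$ and $\lambda=2/\ell_2=2\cos\theta$.

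\emph{Step 3 (the antisymmetric block and conclusion).} For $E^-$ one has $\alpha_1=-\alpha_2=:a$, $\alpha_3=-\alpha_4=:b$, hence $\beta_1=-2a$, $\beta_3=-2b$, $\beta_2=\beta_4=a+b$, and~\eqref{sysT} reduces to the $2\times2$ eigenvalue problem
\[
\lambda\begin{pmatrix}a\\ b\end{pmatrix}=\begin{pmatrix}\tfrac{2}{\ell_1}+\tfrac{1}{\ell_2} & \tfrac{1}{\ell_2}\\ \tfrac{1}{\ell_2} & \tfrac{2}{\ell_3}+\tfrac{1}{\ell_2}\end{pmatrix}\begin{pmatrix}a\\ b\end{pmatrix}.
\]
Its two eigenvalues have sum $\tfrac{2}{\ell_1}+\tfrac{2}{\ell_2}+\tfrac{2}{\ell_3}$ and product $\tfrac{4}{\ell_1\ell_3}+\tfrac{2}{\ell_1\ell_2}+\tfrac{2}{\ell_2\ell_3}$; substituting $\ell_1=2\cos\theta$, $\ell_2=1/\cos\theta$, $\ell_3=2\tan\theta\sin\theta$ and simplifying with $\sin^2\theta+\cos^2\theta=1$, the sum equals $2\cos\theta+\tfrac{1}{\cos\theta\sin^2\theta}$ and the product equals $\tfrac{2}{\sin^2\theta}$, so the roots are exactly $2\cos\theta$ and $\tfrac{1}{\cos\theta\sin^2\theta}$. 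Collecting the two blocks, $\mathrm{Spec}(T_\theta)=\{0,\,2\cos\theta,\,2\cos\theta,\,\tfrac{1}{\cos\theta\sin^2\theta}\}$; since $0<2\cos\theta<\tfrac{1}{\cos\theta\sin^2\theta}$ for $\theta\in(0,\pi/2)$ (the last inequality being $\tfrac12\sin^2 2\theta<1$), we get $\lambda_1^A(T_\theta)=2\cos\theta$ with multiplicity $2$ (one copy in $E^+$, one in $E^-$) and $\lambda_2^A(T_\theta)=\tfrac{1}{\cos\theta\sin^2\theta}$, simple.

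\emph{Expected main obstacle.} The only genuine work is the trigonometric bookkeeping: computing the slant length $\ell_2$ correctly and then verifying that the trace and determinant of the $2\times2$ block collapse to the stated closed forms (one may instead just exhibit explicit eigenvectors $(a,b)$ for $\lambda=2\cos\theta$ and $\lambda=\tfrac{1}{\cos\theta\sin^2\theta}$). As a shortcut for the double multiplicity, one may alternatively note that $T_\theta$ saturates the Reilly inequality (Theorem~\ref{Equality-Case}), so by the argument in the equality-case proof the two coordinate functions $x_1\in E^-$ and $x_2\in E^+$ are eigenfunctions for $\lambda_1^A$; being independent and non-constant, this already forces $\lambda_1^A$ to have multiplicity at least $2$, which together with $\dim E=4$ determines the whole spectrum once $\lambda_2^A$ is identified.
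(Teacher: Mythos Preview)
Your argument is correct and takes a genuinely different route from the paper.

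The paper treats the first-order conditions~\eqref{sysT} as a two-term recursion: it introduces transfer matrices $M_i(\lambda)$ with $(\alpha_{i+1},\beta_{i+1})^t=M_i(\lambda)(\alpha_i,\beta_i)^t$, sets $M(\lambda)=\prod_{i=1}^4 M_i(\lambda)$, and imposes the periodicity condition that $(\alpha_1,\beta_1)$ be a fixed vector of $M(\lambda)$. The eigenvalues are then the roots of $\det(M(\lambda)-\mathrm{Id})=0$, and the authors simply hand this polynomial to MATLAB to factor it and read off the multiplicities by inspecting $M(2\cos\theta)$ and $M(\lambda_2)$ separately. Your approach instead exploits the mirror symmetry $\sigma$ of $T_\theta$ to split the four-dimensional space $E$ of Corollary~\ref{spectrumfinite} into two $\sigma^*$-invariant planes $E^\pm$, reducing the problem to a trivial $2\times 2$ computation on each block. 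This is more conceptual, entirely by hand, and yields the multiplicities for free: the double eigenvalue $2\cos\theta$ is \emph{explained} by the symmetry (one copy in each block), rather than appearing as a coincidence in a MATLAB factorization. The price you pay is that the method is specific to symmetric polygons, whereas the transfer-matrix recipe works uniformly (and is what the paper also uses for the equilateral case in Proposition~\ref{Spectrum-Equilateral-Gon}).

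One small caution: the ``shortcut'' you propose at the end is circular as stated. Theorem~\ref{Equality-Case} says that \emph{if} equality holds \emph{then} the polygon is regular, a losange, a trapeze, or fake-regular; it does not assert that the trapeze achieves equality. In the paper that fact is derived \emph{from} the present proposition (see the Corollary immediately following it), so you cannot invoke it here. Your main argument does not use this shortcut, so this does not affect the validity of your proof.
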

 
 \begin{center}
\tikzset{every picture/.style={line width=0.75pt}} 

\begin{tikzpicture}[x=0.75pt,y=0.75pt,yscale=-1,xscale=1]

\draw   (98,184) -- (130.1,77) -- (199.9,77) -- (232,184) -- cycle ;
\draw  [dash pattern={on 0.84pt off 2.51pt}]  (98,184) -- (166,109) ;
\draw  [dash pattern={on 0.84pt off 2.51pt}]  (166,109) -- (232,184) ;
\draw    (130.1,77) -- (166,109) ;
\draw  [dash pattern={on 0.84pt off 2.51pt}]  (199.9,77) -- (166,109) ;
\draw   (160.93,114.07) -- (156.27,109.41) -- (161.34,104.34) ;
\draw   (171.24,104.1) -- (175.74,108.91) -- (170.5,113.81) ;
\draw    (130.1,77) ;
\draw [shift={(130.1,77)}, rotate = 0] [color={rgb, 255:red, 0; green, 0; blue, 0 }  ][fill={rgb, 255:red, 0; green, 0; blue, 0 }  ][line width=0.75]      (0, 0) circle [x radius= 1.34, y radius= 1.34]   ;
\draw    (98,184) ;
\draw [shift={(98,184)}, rotate = 0] [color={rgb, 255:red, 0; green, 0; blue, 0 }  ][fill={rgb, 255:red, 0; green, 0; blue, 0 }  ][line width=0.75]      (0, 0) circle [x radius= 1.34, y radius= 1.34]   ;
\draw    (232,184) ;
\draw [shift={(232,184)}, rotate = 0] [color={rgb, 255:red, 0; green, 0; blue, 0 }  ][fill={rgb, 255:red, 0; green, 0; blue, 0 }  ][line width=0.75]      (0, 0) circle [x radius= 1.34, y radius= 1.34]   ;
\draw    (199.9,77) ;
\draw [shift={(199.9,77)}, rotate = 0] [color={rgb, 255:red, 0; green, 0; blue, 0 }  ][fill={rgb, 255:red, 0; green, 0; blue, 0 }  ][line width=0.75]      (0, 0) circle [x radius= 1.34, y radius= 1.34]   ;
\draw    (145.38,87.67) .. controls (148.11,85.67) and (148.11,85) .. (148.05,80.33) ;
\draw    (128.11,92.33) .. controls (134.78,93.67) and (134.78,93.67) .. (140.11,90.33) ;

\draw (103,57.4) node [anchor=north west][inner sep=0.75pt]    {$A_{2}$};
\draw (205,57.4) node [anchor=north west][inner sep=0.75pt]    {$A_{1}$};
\draw (75,182.4) node [anchor=north west][inner sep=0.75pt]    {$A_{3}$};
\draw (240,183.4) node [anchor=north west][inner sep=0.75pt]    {$A_{4}$};
\draw (146.33,97.73) node [anchor=north west][inner sep=0.75pt]  [font=\footnotesize]  {$1$};
\draw (130.11,95.73) node [anchor=north west][inner sep=0.75pt]  [font=\footnotesize]  {$\theta $};
\draw (150.67,79.4) node [anchor=north west][inner sep=0.75pt]  [font=\footnotesize]  {$\theta $};

\end{tikzpicture}

 Figure 3. The trapeze $T_\theta$.
 \end{center}
 
 \begin{proof} Let $u$ be an eigenfunction, which is piecewise affine. In particular from Proposition \ref{firstorder2} we know that   there exists some constants $\alpha_i,\beta_i \in \R$ such that 
\begin{eqnarray}\label{relation1}
\left\{
 \begin{array}{ll}
u\circ T_i(t)=\alpha_i +t \beta_i,\quad  \textnormal{ for } t\in[0,1] \text{ and }  1\leqslant i \leqslant 4,\\
\alpha_i+\beta_i=\alpha_{i+1},\\ \displaystyle
\lambda_1\alpha_i=\frac{\beta_{i-1}}{\ell_{i-1}} - \frac{\beta_i}{\ell_i}, \quad \textnormal{for } 1\leqslant i  \leqslant 4,
 \end{array}
  \right. 
 \end{eqnarray}
 where as usual $\ell_i:=|A_{i+1}-A_i|$. Here    $T_i$ is as before the parameterization of $[A_i,A_{i+1}]$ defined by $T_i(t) := (1-t)A_i + tA_{i+1}$.  The relation in \eqref{relation1} says  in particular that the values of $\alpha_i$ and $\beta_i$ are all determined by the values of $\alpha_1$ and $\beta_1$. Indeed, if $\beta_i$ and $\alpha_i$ are given then $\beta_{i+1}$ and $\alpha_{i+1}$  must satisfy
 $$\alpha_{i+1}=\alpha_i+\beta_i,  \textnormal{ and }
 \beta_{i+1}=\left(\frac{\ell_{i+1}}{\ell_i}-\lambda_1 \ell_{i+1}\right)\beta_i -\lambda_1\ell_{i+1} \alpha_i.$$
This suggests to introduce the following matrix, depending on the unknown variable $\lambda \in \R^+$, 
$$
M_i(\lambda):=
\left(
\begin{array}{cc}
1 & 1 \\
-\lambda \ell_{i+1} & \displaystyle \frac{\ell_{i+1}}{\ell_i}-\lambda_1 \ell_{i+1}\\
\end{array}
\right),
$$ 
in such a way  that
 $$
\left(
\begin{array}{c}
\alpha_{i+1} \\
\beta_{i+1}\\
\end{array}
\right)
=M_i(\lambda) \left(
\begin{array}{c}
\alpha_{i} \\
\beta_{i}\\
\end{array}
\right).
 $$
Now by periodicity, we must have $(\alpha_{5},\beta_5)=(\alpha_1,\beta_1)$, in other words 
$$\left(\prod_{i=1}^4 M_i(\lambda) \right) \left(
\begin{array}{c}
\alpha_{1} \\
\beta_{1}\\
\end{array}
\right)=
\left(\begin{array}{c}
\alpha_{1} \\
\beta_{1}\\
\end{array}
\right).
$$
This means that the vector $(\alpha_1,\beta_1)$ must be a eigenvector for the matrix 
$$M(\lambda):=\prod_{i=1}^4 M_i(\lambda),$$
associated to the eigenvalue $1$.  Of differently, the eigenvalues of $T_\theta$ must be a root of  the following equation
$${\rm det } (M(\lambda)-\textnormal{Id})=0.$$
Now the matrix $M(\lambda)$ can be computed explicitly. It only depends on the values of $\ell_i$ that are
\begin{align*}
    \ell_1= 2\cos(\theta), \quad   \ell_2=\ell_4=1/\cos(\theta), \quad
\ell_3= 2\tan(\theta)\sin(\theta).
\end{align*}
A direct computation revels that 
$$f(\lambda):={\rm det } (M(\lambda)-\textnormal{Id})=
\frac{4 \lambda (\lambda - 2\cos(\theta))^2(\lambda \cos(\theta)^3 - \lambda \cos(\theta) + 1))}{\cos(\theta)^3}.
$$
For the convenience of the reader, we provide a little MATLAB code in the appendix which produces this computation, and finds the solutions of the equation $f(\lambda)=0$.  The solutions are  
$$
\lambda=0, \text{ or } \lambda=2\cos(\theta),  \text{ or }  \lambda=\frac{1}{\cos(\theta)\sin(\theta)^2}.
$$
As it is readily checked that 
$$\frac{1}{\cos(\theta)\sin(\theta)^2} > 2\cos(\theta),$$
we deduce that $2\cos(\theta)$ is the first eigenvalue. Finally, substituting the value $\lambda=2\cos(\theta)$ in $M(\lambda)$ we find that 
$$M(2\cos(\theta))=
\left(
\begin{array}{cc}
1 & 0 \\
0  &  1\\
\end{array}
\right).
$$
This means that any vector $(\alpha_1,\beta_1)$ will give an eigenfunction, or differently, the eigenspace associated with $\lambda_1^A(T_\theta)$ is double. On the other hand, the matrix $M(\lambda_2^A(T_\theta))$ has determinant $1$ and since $1$ is an eigenvalue, both egeinvalues must be equal to $1$.  This matrix is either the identity, of equal to
$$
\left(
\begin{array}{cc}
1 & 1 \\
0  &  1\\
\end{array}
\right)
$$
in a suitable basis. A direct computation revels that $M(\lambda_2^A(T_\theta))$ is different from the identity matrix, which implies that we are in the second situation. Therefore, the eigenspace associated to $\lambda_2^A(T_\theta)$ is simple. This finishes the proof of the proposition. 
 \end{proof}

\medskip\begin{corollary} The trapeze $T_\theta$   defined in Definition \ref{definitionT} satisfies the equality case in Reilly's inequality, in other words the following equality holds
$$\lambda_1^A(T_\theta)=\frac{\sum_{i=1}^4|H_i|^2}{P(T_\theta)},$$
where $P(T_\theta)$ is the perimeter and $H_i$ are the curvature vectors. 
\end{corollary}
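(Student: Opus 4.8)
The plan is to evaluate both sides of the claimed identity directly and check they agree. The left-hand side is already in hand: Proposition~\ref{calculLambda1} gives $\lambda_1^A(T_\theta)=2\cos\theta$, and its proof records the side lengths $\ell_1=2\cos\theta$, $\ell_2=\ell_4=1/\cos\theta$ and $\ell_3=2\tan\theta\sin\theta=2\sin^2\theta/\cos\theta$. Summing them and using $\cos^2\theta+\sin^2\theta=1$,
\[
P(T_\theta)=\ell_1+\ell_2+\ell_3+\ell_4=2\cos\theta+\frac{2+2\sin^2\theta}{\cos\theta}=\frac{2\cos^2\theta+2\sin^2\theta+2}{\cos\theta}=\frac{4}{\cos\theta}.
\]

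It remains to compute $\sum_{i=1}^4|H_i|^2$. One clean route uses Proposition~\ref{propPoly}, which gives $|H_i|^2=2(1+\cos\theta_i)$, where $\theta_i$ is the angle at the vertex $A_i$ between the two unit vectors pointing toward its neighbours. Since $T_\theta$ is a symmetric (isoceles) trapezoid --- reflection in the $y$-axis exchanges $A_1\leftrightarrow A_2$ and $A_3\leftrightarrow A_4$, and the sides $[A_1,A_2]$ and $[A_3,A_4]$ are parallel --- we have $\theta_1=\theta_2$, $\theta_3=\theta_4$ and $\theta_1+\theta_4=\pi$ (co-interior angles along the leg $[A_4,A_1]$), hence $\cos\theta_3=\cos\theta_4=-\cos\theta_1$. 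Therefore
\[
\sum_{i=1}^4|H_i|^2=2\cdot 2(1+\cos\theta_1)+2\cdot 2(1-\cos\theta_1)=8.
\]
Equivalently, one verifies by a short computation that $H_i=2\cos\theta\,A_i$ for every $i$ (for instance, the two unit edge vectors at $A_1$ are $\tfrac{A_1-A_4}{|A_1-A_4|}=(\cos 2\theta,\sin 2\theta)$ and $\tfrac{A_1-A_2}{|A_1-A_2|}=(1,0)$, whose sum is $(1+\cos 2\theta,\sin 2\theta)=2\cos\theta\,(\cos\theta,\sin\theta)=2\cos\theta\,A_1$); since $|A_1|^2=|A_2|^2=1$ and $|A_3|^2=|A_4|^2=\tan^2\theta$, this again gives $\sum_i|H_i|^2=4\cos^2\theta\,(2+2\tan^2\theta)=8$.

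Combining the two computations,
\[
\frac{\sum_{i=1}^4|H_i|^2}{P(T_\theta)}=\frac{8}{4/\cos\theta}=2\cos\theta=\lambda_1^A(T_\theta),
\]
which is the asserted equality. There is no real obstacle here; the only delicate points are the trigonometric simplification of the perimeter and a correct bookkeeping of the vertex angles (checking, with the sign convention of Proposition~\ref{propPoly}, that the two ``wide'' and the two ``narrow'' vertices contribute opposite $\cos\theta_1$ terms which cancel). One may also prefer a conceptual formulation of the same fact: the colinearity $H_i=\lambda_1^A A_i$ makes each coordinate function $X_k$ satisfy the first-order eigenvalue condition of Proposition~\ref{firstorder1} with eigenvalue $\lambda_1^A$ --- hence an eigenfunction realizing $\lambda_1^A$ --- and simultaneously saturates the Cauchy--Schwarz and Hölder inequalities used in the proof of Proposition~\ref{ReillyPolygon}, which is precisely the equality case of \eqref{ineqRP}.
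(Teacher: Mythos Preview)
Your proof is correct and follows essentially the same route as the paper: compute $P(T_\theta)=4/\cos\theta$ from the side lengths, compute $\sum_i|H_i|^2=8$ from the vertex angles, and compare with $\lambda_1^A(T_\theta)=2\cos\theta$ from Proposition~\ref{calculLambda1}. The only cosmetic difference is that the paper identifies the four interior angles explicitly as $2\theta,2\theta,\pi-2\theta,\pi-2\theta$ and evaluates each $|H_i|$, whereas you invoke the co-interior angle relation $\theta_1+\theta_4=\pi$ to get the cancellation $\sum_i|H_i|^2=4(1+\cos\theta_1)+4(1-\cos\theta_1)=8$ without naming $\theta_1$; your alternative check via $H_i=2\cos\theta\,A_i$ and the closing conceptual remark are pleasant extras not present in the paper.
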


\begin{proof} We already know from Proposition \ref{calculLambda1} that $\lambda_1^A(T_\theta)=2\cos(\theta)$.  To compute the perimeter let us recall that
\begin{align*}
    \ell_1= 2\cos(\theta), \quad   \ell_2=\ell_4=1/\cos(\theta), \quad
\ell_3= 2\tan(\theta)\sin(\theta).
\end{align*}
This means that 
\begin{align*}
    P(T_\theta)&=2\cos(\theta) +  2/\cos(\theta) +2\tan(\theta)\sin(\theta) \notag = \frac{2\cos^2(\theta) +  2 +2 \sin^2(\theta)}{\cos(\theta)} = \frac{4}{\cos(\theta)}.
\end{align*}
Finally, the angles of $T_\theta$ at each vertex is either  $2\theta$ (at $A_1$ and $A_2$) or $\pi-2\theta$ (at $A_3$ and $A_4$). This means that $|H_i|= 2\cos(\theta)$ for $i=1,2$, and $|H_i|=2\cos(\pi/2-\theta)=2\sin(\theta)$ for $i=3,4$. Consequently,
$$\sum_{i=1}^4 |H_i|^2=8 \cos^2(\theta) + 8\sin^2(\theta)=8.$$
We conclude that 
$$\frac{1}{P(T_\theta)}\displaystyle\sum_{i=1}^4|H_i|^2=\frac{8\cos(\theta)}{4}=2\cos(\theta),$$
as desired.
\end{proof}

\subsection{Computation of $\lambda_1^A$ for a $n+2$-``fake regular" polygon.}
\label{fakeregular}
Let $n\geqslant2$ be an integer and $F_n=\{A_i\}_{0\leqslant i\leqslant n+1}$ be a $n+2$-``fake-regular'' polygon built as follows. Let $O$ and $A_1$ be points such that $OA_1=1$ and for any $k\in\{2,\cdots,n+1\}$, $A_k=r_{(k-1),n}(A_1)$ where $r_{(k-1),n}$ is the rotation of center $O$ and angle $(k-1)\pi/n$. Then for any $k\in\{2,\cdots,n\}$, $\widehat{A_2}=\cdots=\widehat{A_n}=\theta_n=\left(1-1/n\right)\pi$. Now we fix the point $A_0$ on the mediator of the segment $[A_1A_{n+1}]$ such that $\widehat{A_1}=\widehat{A_{n+1}}=\theta_n$. Then $\widehat{A_0}=\pi-\theta_n$ and easy computations show that for all $k\in\{1,\cdots,n\}$,
$$A_kA_{k+1}=a_n=2\sin\left(\dfrac{\pi}{2n}\right),$$
and
$$A_0A_1=A_0A_{n+1}=b_n=\dfrac{1}{\cos\left(\theta_n/2\right)}=\dfrac{2}{a_n}.$$
Now since for any $k\in\{1,\cdots,n+1\}$, $OA_k=1$, we have $H_k=2\cos\left(\theta_n/2\right)\overrightarrow{OA_k}.$ Moreover,
$$H_0=2\cos\left(\dfrac{\pi-\theta_n}{2}\right)\frac{\overrightarrow{OA_0}}{OA_0}=2\sin\left(\theta_n/2\right)\frac{\overrightarrow{OA_0}}{OA_0}$$

\tikzset{every picture/.style={line width=0.75pt}} 

\begin{tikzpicture}[x=0.75pt,y=0.75pt,yscale=-1,xscale=1]

\draw   (234,152.46) -- (222.14,196.73) -- (189.73,229.14) -- (145.46,241) -- (101.19,229.14) -- (68.78,196.73) -- (56.92,152.46) -- (68.78,108.19) -- (101.19,75.78) -- (145.46,63.92) -- (189.73,75.78) -- (222.14,108.19) -- cycle ;
\draw    (145.46,63.92) -- (477.92,152.75) ;
\draw    (145.46,241) -- (477.92,152.75) ;
\draw  [color={rgb, 255:red, 0; green, 0; blue, 0 }  ,draw opacity=0.19 ] (56.92,152.46) .. controls (56.92,103.56) and (96.56,63.92) .. (145.46,63.92) .. controls (194.36,63.92) and (234,103.56) .. (234,152.46) .. controls (234,201.36) and (194.36,241) .. (145.46,241) .. controls (96.56,241) and (56.92,201.36) .. (56.92,152.46) -- cycle ;
\draw [color={rgb, 255:red, 0; green, 0; blue, 0 }  ,draw opacity=0.33 ]   (146.92,301.75) -- (144.93,10.75) ;
\draw [shift={(144.92,8.75)}, rotate = 89.61] [color={rgb, 255:red, 0; green, 0; blue, 0 }  ,draw opacity=0.33 ][line width=0.75]    (10.93,-3.29) .. controls (6.95,-1.4) and (3.31,-0.3) .. (0,0) .. controls (3.31,0.3) and (6.95,1.4) .. (10.93,3.29)   ;
\draw [color={rgb, 255:red, 0; green, 0; blue, 0 }  ,draw opacity=0.33 ]   (4.92,152.75) -- (535.92,152.75) ;
\draw [shift={(537.92,152.75)}, rotate = 180] [color={rgb, 255:red, 0; green, 0; blue, 0 }  ,draw opacity=0.33 ][line width=0.75]    (10.93,-3.29) .. controls (6.95,-1.4) and (3.31,-0.3) .. (0,0) .. controls (3.31,0.3) and (6.95,1.4) .. (10.93,3.29)   ;
\draw    (145.46,63.92) ;
\draw [shift={(145.46,63.92)}, rotate = 0] [color={rgb, 255:red, 0; green, 0; blue, 0 }  ][fill={rgb, 255:red, 0; green, 0; blue, 0 }  ][line width=0.75]      (0, 0) circle [x radius= 1.34, y radius= 1.34]   ;
\draw    (189.73,75.78) ;
\draw [shift={(189.73,75.78)}, rotate = 0] [color={rgb, 255:red, 0; green, 0; blue, 0 }  ][fill={rgb, 255:red, 0; green, 0; blue, 0 }  ][line width=0.75]      (0, 0) circle [x radius= 1.34, y radius= 1.34]   ;
\draw    (101.19,75.78) ;
\draw [shift={(101.19,75.78)}, rotate = 0] [color={rgb, 255:red, 0; green, 0; blue, 0 }  ][fill={rgb, 255:red, 0; green, 0; blue, 0 }  ][line width=0.75]      (0, 0) circle [x radius= 1.34, y radius= 1.34]   ;
\draw    (56.92,152.46) ;
\draw [shift={(56.92,152.46)}, rotate = 0] [color={rgb, 255:red, 0; green, 0; blue, 0 }  ][fill={rgb, 255:red, 0; green, 0; blue, 0 }  ][line width=0.75]      (0, 0) circle [x radius= 1.34, y radius= 1.34]   ;
\draw    (68.78,108.19) ;
\draw [shift={(68.78,108.19)}, rotate = 0] [color={rgb, 255:red, 0; green, 0; blue, 0 }  ][fill={rgb, 255:red, 0; green, 0; blue, 0 }  ][line width=0.75]      (0, 0) circle [x radius= 1.34, y radius= 1.34]   ;
\draw    (68.78,196.73) ;
\draw [shift={(68.78,196.73)}, rotate = 0] [color={rgb, 255:red, 0; green, 0; blue, 0 }  ][fill={rgb, 255:red, 0; green, 0; blue, 0 }  ][line width=0.75]      (0, 0) circle [x radius= 1.34, y radius= 1.34]   ;
\draw    (101.19,229.14) ;
\draw [shift={(101.19,229.14)}, rotate = 0] [color={rgb, 255:red, 0; green, 0; blue, 0 }  ][fill={rgb, 255:red, 0; green, 0; blue, 0 }  ][line width=0.75]      (0, 0) circle [x radius= 1.34, y radius= 1.34]   ;
\draw    (145.46,241) ;
\draw [shift={(145.46,241)}, rotate = 0] [color={rgb, 255:red, 0; green, 0; blue, 0 }  ][fill={rgb, 255:red, 0; green, 0; blue, 0 }  ][line width=0.75]      (0, 0) circle [x radius= 1.34, y radius= 1.34]   ;
\draw    (189.73,229.14) ;
\draw [shift={(189.73,229.14)}, rotate = 0] [color={rgb, 255:red, 0; green, 0; blue, 0 }  ][fill={rgb, 255:red, 0; green, 0; blue, 0 }  ][line width=0.75]      (0, 0) circle [x radius= 1.34, y radius= 1.34]   ;
\draw    (222.14,108.19) ;
\draw [shift={(222.14,108.19)}, rotate = 0] [color={rgb, 255:red, 0; green, 0; blue, 0 }  ][fill={rgb, 255:red, 0; green, 0; blue, 0 }  ][line width=0.75]      (0, 0) circle [x radius= 1.34, y radius= 1.34]   ;
\draw    (234,152.46) ;
\draw [shift={(234,152.46)}, rotate = 0] [color={rgb, 255:red, 0; green, 0; blue, 0 }  ][fill={rgb, 255:red, 0; green, 0; blue, 0 }  ][line width=0.75]      (0, 0) circle [x radius= 1.34, y radius= 1.34]   ;
\draw    (222.14,196.73) ;
\draw [shift={(222.14,196.73)}, rotate = 0] [color={rgb, 255:red, 0; green, 0; blue, 0 }  ][fill={rgb, 255:red, 0; green, 0; blue, 0 }  ][line width=0.75]      (0, 0) circle [x radius= 1.34, y radius= 1.34]   ;
\draw    (477.92,152.75) ;
\draw [shift={(477.92,152.75)}, rotate = 0] [color={rgb, 255:red, 0; green, 0; blue, 0 }  ][fill={rgb, 255:red, 0; green, 0; blue, 0 }  ][line width=0.75]      (0, 0) circle [x radius= 1.34, y radius= 1.34]   ;
\draw    (145.46,152.46) ;
\draw [shift={(145.46,152.46)}, rotate = 0] [color={rgb, 255:red, 0; green, 0; blue, 0 }  ][fill={rgb, 255:red, 0; green, 0; blue, 0 }  ][line width=0.75]      (0, 0) circle [x radius= 1.34, y radius= 1.34]   ;
\draw [color={rgb, 255:red, 0; green, 0; blue, 0 }  ,draw opacity=0.33 ][fill={rgb, 255:red, 0; green, 0; blue, 0 }  ,fill opacity=0.33 ]   (101.19,75.78) -- (145.46,152.46) ;
\draw [color={rgb, 255:red, 0; green, 0; blue, 0 }  ,draw opacity=0.33 ]   (128.4,122.79) .. controls (131.63,118.94) and (140.56,116.17) .. (145.63,118.33) ;
\draw [color={rgb, 255:red, 0; green, 0; blue, 0 }  ,draw opacity=0.33 ]   (132.01,68.15) .. controls (137.44,74.72) and (152.01,74.15) .. (160.59,67.87) ;
\draw [color={rgb, 255:red, 0; green, 0; blue, 0 }  ,draw opacity=0.33 ]   (431.13,164.84) .. controls (421.99,159.13) and (423.13,145.13) .. (432.84,140.56) ;

\draw (146,47.4) node [anchor=north west][inner sep=0.75pt]  [font=\small]  {$A_{1} =\ B_{1} =B_{2n+1}$};
\draw (175.33,78.4) node [anchor=north west][inner sep=0.75pt]  [font=\small]  {$B_{2n}$};
\draw (38,58.4) node [anchor=north west][inner sep=0.75pt]  [font=\small]  {$A_{2} \ =\ B_{2}$};
\draw (38,230.4) node [anchor=north west][inner sep=0.75pt]  [font=\small]  {$A_{n} \ =\ B_{n}$};
\draw (147.46,244.4) node [anchor=north west][inner sep=0.75pt]  [font=\small]  {$A_{n+1} \ =\ B_{n+1}$};
\draw (154.4,211.2) node [anchor=north west][inner sep=0.75pt]  [font=\small]  {$B_{n+2}$};
\draw (129,157.4) node [anchor=north west][inner sep=0.75pt]    {$0$};
\draw (116.95,71.4) node [anchor=north west][inner sep=0.75pt]  [font=\small]  {$a_{n}$};
\draw (110.67,108.73) node [anchor=north west][inner sep=0.75pt]  [font=\footnotesize]  {$1$};
\draw (149.33,75.4) node [anchor=north west][inner sep=0.75pt]  [font=\footnotesize]  {$\theta _{n}$};
\draw (128,91.4) node [anchor=north west][inner sep=0.75pt]  [font=\small]  {$\frac{\pi }{n}$};
\draw (372.67,136.35) node [anchor=north west][inner sep=0.75pt]  [font=\small]  {$\pi -\theta _{n}$};
\draw (472.06,132.01) node [anchor=north west][inner sep=0.75pt]    {$A_{0}$};
\draw (305.72,88.01) node [anchor=north west][inner sep=0.75pt]  [font=\small]  {$b_{n}$};

\end{tikzpicture}
\begin{center}
    Figure 2. A $(n+2)-$fake regular-polygone.
\end{center}

\vspace{0.5cm}
Now consider the $2n$-gon $\Gamma_{2n}=\{B_i\}_{1\leqslant i\leqslant n+1}$ where for any $k\in\{1,\cdots,n+1\}$, $B_k=A_k$ and for $k\in\{2,\cdots,n\}$, $B_{n+k}=r_{(k-1),n}(B_{n+1})$. By convention we put $B_{2n+1}=B_1$. The perimeter of $\Gamma_{2n}$ is $P(\Gamma_{2n})=2na_n=4n\sin\left(\pi/2n\right)$ and from the proposition \ref{Spectrum-Equilateral-Gon} the first eigenvalue $\lambda_1^{\Gamma_{2n}}$ of $\Gamma_{2n}$ is 
\begin{equation}\label{Spectrum-Gamma-2n}\lambda_1^{\Gamma_{2n}}=\dfrac{8n}{P(\Gamma_{2n})}\sin^2\left(\dfrac{\pi}{2n}\right)=2\sin\left(\dfrac{\pi}{2n}\right)=a_n.
\end{equation}

\medskip\begin{proposition} The first nonzero eigenvalue $\lambda_1^{F_n}$ is 
$$\lambda_1^{F_n}=\lambda_1^{\Gamma_{2n}}=\frac{1}{P(F_n)}\displaystyle\sum_{i=0}^{n+1}|H_i|^2=a_n.$$
\end{proposition}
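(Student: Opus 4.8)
The idea is to squeeze $\lambda_1^{F_n}$ between the Reilly bound of Theorem~\ref{ReillyPolygon}, which gives $\lambda_1^{F_n}\le P(F_n)^{-1}\sum_{i=0}^{n+1}|\Hm_i|^2$, and the fact that $a_n$ is itself an eigenvalue; the identity $\lambda_1^{\Gamma_{2n}}=a_n$ is already recorded in \eqref{Spectrum-Gamma-2n}. First I would normalise $O$ to the origin, so that $A_k=\bigl(\cos\tfrac{(k-1)\pi}{n},\sin\tfrac{(k-1)\pi}{n}\bigr)$ for $1\le k\le n+1$ and, since $A_0$ sits on the perpendicular bisector of $[A_1,A_{n+1}]$ at distance $b_n$ from $A_1$, $A_0=\bigl(0,-\cot\tfrac{\pi}{2n}\bigr)$; the side lengths being $\ell_0=\ell_{n+1}=b_n=2/a_n$ and $\ell_1=\cdots=\ell_n=a_n$, this gives $P(F_n)=na_n+4/a_n$. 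Using $2\cos(\theta_n/2)=a_n$, $2\cos\bigl(\tfrac{\pi-\theta_n}{2}\bigr)=2\cos\tfrac{\pi}{2n}$ and $OA_0=\tan(\theta_n/2)=\cot\tfrac{\pi}{2n}$, a one-line check gives $\Hm_i=a_n A_i$ for every $i\in\{0,\dots,n+1\}$, whence $\sum_{i=0}^{n+1}|\Hm_i|^2=a_n^2\bigl(n+1+\cot^2\tfrac{\pi}{2n}\bigr)=na_n^2+4$, and therefore $P(F_n)^{-1}\sum_{i}|\Hm_i|^2=a_n$. In particular $\lambda_1^{F_n}\le a_n$.

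\textbf{$a_n$ is attained.} In these coordinates $\sum_{i=0}^{n+1}A_i=\bigl(0,\cot\tfrac{\pi}{2n}\bigr)+\bigl(0,-\cot\tfrac{\pi}{2n}\bigr)=0$, so each coordinate function $X_j$ ($j=1,2$) is admissible in \eqref{lambdaP}. With $\alpha_i:=X_j(A_i)$ and $\beta_i:=X_j(A_{i+1})-X_j(A_i)$, the very definition of $\Hm_i$ yields $\tfrac{\beta_{i-1}}{\ell_{i-1}}-\tfrac{\beta_i}{\ell_i}=(\Hm_i)_j=a_n(A_i)_j=a_n\alpha_i$, so that $(\alpha,\beta)$ solves the eigenvalue system \eqref{sysT} with $\lambda=a_n$; by Proposition~\ref{firstorder1} and Corollary~\ref{spectrumfinite} this means $a_n$ is a non-zero eigenvalue of $F_n$, of multiplicity $\ge2$. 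Combined with the first paragraph this already gives $\lambda_1^{F_n}\le a_n$ and $a_n\in\mathrm{Spec}(F_n)$.

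\textbf{Lower bound (main obstacle).} The crux is that $F_n$ has \emph{no} eigenvalue in $(0,a_n)$. I would argue this exactly as in the proof of Proposition~\ref{calculLambda1}: reading the transfer matrices around the polygon and noting that the $n-1$ consecutive interior vertices $A_2,\dots,A_n$ contribute a power of the regular–polygon matrix $C(\lambda)$ (the matrix of Proposition~\ref{Spectrum-Equilateral-Gon} with $\Lambda=\lambda a_n$), the eigenvalues of $F_n$ are exactly the $\lambda$ with $\det\bigl(M(\lambda)-\mathrm{Id}\bigr)=0$, where $M(\lambda)=T_1(\lambda)\,T_0(\lambda)\,T_{n+1}(\lambda)\,C(\lambda)^{\,n-1}$ and $T_1,T_0,T_{n+1}$ are the $2\times2$ matrices attached to the three ``irregular'' vertices $A_1,A_0,A_{n+1}$; since $\det T_i=\ell_i/\ell_{i-1}$ telescopes, $M(\lambda)\in\mathrm{SL}_2(\R)$, so $\det(M(\lambda)-\mathrm{Id})=2-\mathrm{tr}\,M(\lambda)$ and the eigenvalue equation is just $\mathrm{tr}\,M(\lambda)=2$. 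Using $C(\lambda)^{k}=\tfrac{\sin k\psi}{\sin\psi}C(\lambda)-\tfrac{\sin(k-1)\psi}{\sin\psi}\mathrm{Id}$ with $2\cos\psi=2-\lambda a_n$, one obtains a closed trigonometric expression for $\mathrm{tr}\,M(\lambda)$; at $\lambda=a_n$ one has $\psi=\pi/n$, $T_1T_0T_{n+1}=-C(a_n)$ and $C(a_n)^{n}=-\mathrm{Id}$, hence $M(a_n)=\mathrm{Id}$, which is consistent with $a_n$ being a double root. The obstacle is precisely the sign statement $2-\mathrm{tr}\,M(\lambda)>0$ for $\lambda\in(0,a_n)$; this is the analogue of the factorisation of $f(\lambda)$ carried out for the trapeze (where the polynomial was shown to be positive on $(0,\lambda_1)$), and as there the trigonometric bookkeeping can be done by hand or delegated to a short computer-algebra check. (Equivalently one may exploit that the reflection of $F_n$ across the line $OA_0$ is a symmetry of the problem, so the spectrum splits into an even and an odd part, each a one–dimensional Robin eigenvalue problem on a path of edge lengths $(b_n,a_n,\dots,a_n)$, and treat each part by the same transfer-matrix method.) Once this is established, $\lambda_1^{F_n}=a_n$, and together with the first paragraph and \eqref{Spectrum-Gamma-2n} we get $\lambda_1^{F_n}=\lambda_1^{\Gamma_{2n}}=P(F_n)^{-1}\sum_{i=0}^{n+1}|\Hm_i|^2=a_n$, as claimed.
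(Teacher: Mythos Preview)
Your first two steps are correct and arguably cleaner than the paper's: the identity $\Hm_i=a_nA_i$ (once the origin is at $O$) simultaneously yields the Reilly upper bound $\lambda_1^{F_n}\le a_n$ and, via Proposition~\ref{firstorder2}, shows that $a_n$ is actually an eigenvalue of multiplicity at least two. But these two facts together do \emph{not} give $\lambda_1^{F_n}=a_n$: you still have to exclude an eigenvalue in $(0,a_n)$, and this is exactly where your argument stops being a proof.

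The transfer-matrix route you sketch is not the paper's, and the difficulty you identify is real. In the trapeze case (Proposition~\ref{calculLambda1}) the product $M(\lambda)$ has four fixed factors and $\det(M(\lambda)-\mathrm{Id})$ factors explicitly; here you face $C(\lambda)^{\,n-1}$ for arbitrary $n$, and although the Chebyshev expression for $C(\lambda)^k$ is available, turning $\mathrm{tr}\,M(\lambda)>2$ on $(0,a_n)$ into an honest inequality valid for every $n$ is not ``bookkeeping'' you have done, nor is it obviously a short computer-algebra check (the output depends symbolically on $n$). Your parenthetical symmetry remark is closer to the truth but is, again, only a plan.

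The paper's lower bound is a genuinely different, purely variational, comparison with $\Gamma_{2n}$. One takes a first eigenfunction $u$ of $F_n$, decomposes it into its symmetric and antisymmetric parts with respect to the axis $OA_0$, and in each case manufactures out of the values $u(A_i)$ an admissible competitor $v$ on the regular $2n$-gon $\Gamma_{2n}$ (antisymmetric extension of $u_i-u_1$ in the symmetric case, symmetric extension of $u_i$ in the antisymmetric case). The Rayleigh inequality $\int_{\Gamma_{2n}}|\nabla v|^2\ge a_n\sum v_i^2$, combined with the identities $a_nb_n=2$ and $\sum_{i=0}^{n+1}u_i=0$, then yields after a short algebraic manipulation
\[
\lambda_1^{F_n}=\int_{F_n}|\nabla u|^2\ge a_n\sum_{i=0}^{n+1}u_i^2,
\]
hence $\lambda_1^{F_n}\ge a_n$. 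No transfer matrices, no case $n$-dependent factorisation: the whole point is that the ``regular'' part of $F_n$ is half of $\Gamma_{2n}$, and the two long edges $A_0A_1$, $A_0A_{n+1}$ contribute exactly the right boundary terms because $b_n=2/a_n$. This is the missing idea in your proposal.
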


\begin{proof} First, we have
\begin{align*}
\dfrac{1}{P(F_n)}\displaystyle\sum_{i=0}^{n+1}|H_i|^2&=\dfrac{4(n+1)\cos^2\left(\theta_n/2\right)+4\sin^2\left(\theta_n/2\right)}{2n\sin\left(\dfrac{\pi}{2n}\right)+\dfrac{2}{\sin\left(\pi/2n\right)}}\\
&=2\dfrac{n\cos^2\left(\theta_n/2\right)+1}{n\sin^2\left(\pi/2n\right)+1}\sin\left(\dfrac{\pi}{2n}\right)\\
&=2\sin\left(\pi/2n\right)=a_n
\end{align*}
On the other hand, Reilly's inequality and (\ref{Spectrum-Gamma-2n}) yields
\begin{equation}\label{major-lambda1-F_n}
\lambda_1^{F_n}\leqslant\dfrac{1}{P(F_n)}\displaystyle\sum_{i=0}^{n+1}|H_i|^2=a_n=\lambda_1^{\Gamma_{2n}}.
\end{equation}
To complete the proof it remains to prove that $\lambda_1^{\Gamma_{2n}}\leqslant\lambda_1^{F_n}$. Let $u$ be a nonzero eigenfunction of $F_n$ associated to $\lambda_1^{F_n}$. Let $u^s$ be the function defined on $F_n$ for any $k\in\{0,\cdots,n+1\}$ by
$$ u^s(A_k)=u(s_{(OA_0)}(A_k))$$
where for any line $\mathcal{D}$, $s_{\mathcal{D}}$ denotes the reflection with respect to $\mathcal{D}$. Note that $u^s$ is still an eigenfunction associated to $\lambda_1^{F_n}$. If $u^s=u$, $u$ will be said symmetric and if $u^s=-u$, $u$ will be said antisymmetric. Let $E$ be the space of first eigenfunctions of $\lambda_1^{F_n}$, $\mathcal{S}=\{u\in E\ \ ;\ \ u^s=u\}$ and $\mathcal{A}=\{u\in E\ \ ;\ \ u^s=-u\}$. Then $E=\mathcal{S}\oplus\mathcal{A}$. In a first time assume that $\mathcal{S}\neq\{0\}$ and let $u\in\mathcal{S}\setminus\{0\}$. For any $k\in\{0,\cdots,n+1\}$, we put $u_k=u(A_k)$. In particular we have $u_1=u_{n+1}$ because $u$ is symmetric. Let $v$ defined on $\Gamma_{2n}$ by 
\begin{align*}
    v_1&=v(B_1)=v(B_{n+1})=v_{n+1}=0, \\
  v_k&=v(B_k)=u_k-u_1, \quad \textnormal{ for } k\in\{2,\cdots,n\},\\
    v_k&=v(B_k)=-v(s_{(A_1A_{n+1})}(B_k)), \quad \textnormal{ for } k\in\{n+2,\cdots,2n\}.
\end{align*}
Then using $$\sum_{i=1}^{2n}v_i=0,$$ we get 
\begin{align*}
\int_{\Gamma_{2n}}|\nabla v|^2\;d\mathcal{H}_1&\geqslant a_n\sum_{i=1}^{2n}v_i^2=2a_n\sum_{i=2}^{n}(u_i-u_1)^2\\
&=2a_n\sum_{i=2}^{n}u_i^2-4a_nu_1\sum_{i=2}^{n}u_i+2(n-1)a_nu_1^2.
\end{align*}
Since $u$ is a first eigenfunction of $F_n$, we have 
$$\sum_{i=0}^{n+1}u_i=2u_1+u_0+\sum_{i=2}^{n}u_i=0,$$
where we have used the fact that $u_1=u_{n+1}$. Then we deduce that 
\begin{align*}
\int_{\Gamma_{2n}}|\nabla v|^2\;d\mathcal{H}_1&\geqslant2a_n\sum_{i=2}^{n}u_i^2+4a_nu_1(2u_1+u_0)+2(n-1)a_nu_1^2\\
&=2a_n\sum_{i=0}^{n+1}u_i^2-2a_nu_0^2-4a_nu_1^2+4a_nu_1(2u_1+u_0)+2(n-1)a_nu_1^2\\
&=2a_n\sum_{i=0}^{n+1}u_i^2-2a_nu_0^2+4a_nu_1u_0+2(n+1)a_nu_1^2\end{align*}
On the other hand,
$$\int_{\Gamma_{2n}}|\nabla v|^2d\mathcal{H}_1=\frac{1}{a_n}\sum_{i=1}^{2n}(v_{i+1}-v_i)^2=\frac{1}{a_n}\left(2\sum_{i=2}^{n-1}(u_{i+1}-u_i)^2+4(u_2-u_1)^2\right)$$
Combining this with the previous inequality, we get 
\begin{equation}\label{Calcul-intermÃ©diaire}
\frac{1}{a_n}\sum_{i=2}^{n-1}(u_{i+1}-u_i)^2+\dfrac{2}{a_n}(u_2-u_1)^2\geqslant a_n\sum_{i=0}^{n+1}u_i^2-a_nu_0^2+2a_nu_1u_0+(n+1)a_nu_1^2.
\end{equation}
We can assume without loss of generality that $$\displaystyle\sum_{i=0}^{n+1}u_i^2=1.$$ Now since $u$ is an eigenfunction associated to $\lambda_1^{F_n}$, we have
\begin{align*}
\lambda_1^{F_n}&= \displaystyle\int_{F_n}|\nabla u|^2d\mathcal{H}_1=\dfrac{1}{a_n}\displaystyle\sum_{i=1}^n(u_{i+1}-u_i)^2+\dfrac{2}{b_n}(u_0-u_1)^2\\
&= \dfrac{1}{a_n}\displaystyle\sum_{i=2}^{n-1}(u_{i+1}-u_i)^2+\dfrac{2}{a_n}(u_2-u_1)^2+a_n(u_0-u_1)^2
\end{align*}
In this last equality we have used the fact that $u_2=u_n$, $u_1=u_{n+1}$ and $a_n=2/b_n$. Using (\ref{Calcul-intermÃ©diaire}),
\begin{align*}
\lambda_1^{F_n}&\geqslant a_n\displaystyle\sum_{i=0}^{n+1}u_i^2-a_nu_0^2+2a_nu_1u_0+(n+1)a_nu_1^2+a_n(u_0-u_1)^2\\
&=a_n\displaystyle\sum_{i=0}^{n+1}u_i^2+(n+2)a_nu_1^2 \geqslant a_n=\lambda_1^{\Gamma_{2n}}.
\end{align*}
From (\ref{major-lambda1-F_n}), we get $\lambda_1^{F_n}=\lambda_1^{\Gamma_{2n}}$. Note that this equality implies that any symmetric eigenfunction for $\lambda_1^{F_n}$ satisfies $u_1=u_{n+1}=0$. Now if $\mathcal{S}=\{0\}$, then $\mathcal{A}\neq\{0\}$. In this case we consider $u\in\mathcal{A}\setminus\{0\}$. Let $v$ defined on $\Gamma_{2n}$ by
\begin{align*}
    v_k&=v(B_k)=u_k, \quad \textnormal{ for }k\in\{2,\cdots,n\}\\
    v_k&=v(B_k)=v(s_{(A_1A_{n+1})}(B_k)), \quad \textnormal{ for }k\in\{n+2,\cdots,2n\}.
\end{align*}
Then,
\begin{equation}\label{major-lambda-1-Gamma-2n}
\left(\sum_{i=1}^{2n}v_i^2\right)^{-1}\displaystyle\int_{\Gamma_{2n}}|\nabla v|^2\;d\mathcal{H}_1=\left( \sum_{i=1}^nu_i^2\right)^{-1}\sum_{i=1}^n\dfrac{1}{a_n}(u_{i+1}-u_i)^2\geqslant\lambda_1^{\Gamma_{2n}}=a_n,
\end{equation}
where we have used the symmetry of $u$ with respect the line $(A_1A_{n+1})$. Moreover,
\begin{align*}
\lambda_1^{F_n}&=\left( \sum_{i=0}^{n+1}u_i^2\right)^{-1}\int_{F_n}|\nabla u|^2\;d\mathcal{H}_1\\
&=\sum_{i=1}^n\dfrac{1}{a_n}(u_{i+1}-u_i)^2+\dfrac{1}{b_n}(u_1-u_0)^2+\dfrac{1}{b_n}(u_{n+1}-u_0)^2\\
&=\displaystyle\sum_{i=1}^n\dfrac{1}{a_n}(u_{i+1}-u_i)^2+a_nu_{n+1}^2. \end{align*}
In this last equality we have used the fact that $u_0=0$, $u_1=-u_{n+1}$ and $b_n=2/a_n$. From (\ref{major-lambda-1-Gamma-2n}), we deduce 
$$\lambda_1^{F_n}\geqslant a_n=\lambda_1^{\Gamma_{2n}}$$
and we conclude that $\lambda_1^{F_n}=\lambda_1^{\Gamma_{2n}}$ with (\ref{major-lambda1-F_n}).
\end{proof}

\section{Reilly inequality for star-shaped graphs}
\label{graph}
The polygon considered as a graph can be viewed as having only vertices of degree 2. The curvature at one of its vertices is the sum of two unit vectors, and controlling the direction and the norm of this curvature induces a control over the angle formed by the two edges at that vertex. We subsequently establish that equality is attained when the polygon is regular, a trapezoid, a rhombus, or a so-called fake-regular polygon. We now wish to consider this type of graph :\newline

\begin{definition}
	A $n$-star-shaped graph is the union of $n$ segments $\Gamma_i = [A_0,A_i]$ of $\mathbb{R}^N$ such that there do not exist two indices $i$ and $j$ for which $\Gamma_i \subset \Gamma_j$.
\end{definition}

Away from the center, the curvature at the end of each edge is a unit vector whose orientation is defined by the edge. In the center, the curvature is the inverse of the sum of all the unit vectors mentioned above.

\vspace{1em}

\textit{Question.} Which star-shaped graphs satisfy the Reilly inequality ?

\vspace{1em}

Let $n \in \mathbb{N}$ and consider the $n$-star-shaped graph $\Gamma$ defined as the union of the edges $\Gamma_i := [A_0,A_i]$ :

\vspace{1em}
\tikzset{every picture/.style={line width=0.75pt}} 

\begin{tikzpicture}[x=0.75pt,y=0.75pt,yscale=-1,xscale=1]
	
	\draw    (86.33,49) -- (175.33,124) ;
	\draw [shift={(86.33,49)}, rotate = 40.12] [color={rgb, 255:red, 0; green, 0; blue, 0 }  ][fill={rgb, 255:red, 0; green, 0; blue, 0 }  ][line width=0.75]      (0, 0) circle [x radius= 3.35, y radius= 3.35]   ;
	\draw    (175.33,124) -- (44.33,121) ;
	\draw [shift={(44.33,121)}, rotate = 181.31] [color={rgb, 255:red, 0; green, 0; blue, 0 }  ][fill={rgb, 255:red, 0; green, 0; blue, 0 }  ][line width=0.75]      (0, 0) circle [x radius= 3.35, y radius= 3.35]   ;
	\draw [shift={(175.33,124)}, rotate = 181.31] [color={rgb, 255:red, 0; green, 0; blue, 0 }  ][fill={rgb, 255:red, 0; green, 0; blue, 0 }  ][line width=0.75]      (0, 0) circle [x radius= 3.35, y radius= 3.35]   ;
	\draw    (175.33,124) -- (96.36,167.36) ;
	\draw [shift={(96.36,167.36)}, rotate = 151.24] [color={rgb, 255:red, 0; green, 0; blue, 0 }  ][fill={rgb, 255:red, 0; green, 0; blue, 0 }  ][line width=0.75]      (0, 0) circle [x radius= 3.35, y radius= 3.35]   ;
	\draw [shift={(175.33,124)}, rotate = 151.24] [color={rgb, 255:red, 0; green, 0; blue, 0 }  ][fill={rgb, 255:red, 0; green, 0; blue, 0 }  ][line width=0.75]      (0, 0) circle [x radius= 3.35, y radius= 3.35]   ;
	\draw    (175.33,124) -- (388.33,89) ;
	\draw [shift={(388.33,89)}, rotate = 350.67] [color={rgb, 255:red, 0; green, 0; blue, 0 }  ][fill={rgb, 255:red, 0; green, 0; blue, 0 }  ][line width=0.75]      (0, 0) circle [x radius= 3.35, y radius= 3.35]   ;
	\draw    (175.33,124) -- (175.33,250) ;
	\draw [shift={(175.33,250)}, rotate = 90] [color={rgb, 255:red, 0; green, 0; blue, 0 }  ][fill={rgb, 255:red, 0; green, 0; blue, 0 }  ][line width=0.75]      (0, 0) circle [x radius= 3.35, y radius= 3.35]   ;
	
	\draw    (388.33,89) -- (457.36,77.33) ;
	\draw [shift={(459.33,77)}, rotate = 170.41] [color={rgb, 255:red, 0; green, 0; blue, 0 }  ][line width=0.75]    (10.93,-3.29) .. controls (6.95,-1.4) and (3.31,-0.3) .. (0,0) .. controls (3.31,0.3) and (6.95,1.4) .. (10.93,3.29)   ;
	\draw [color={rgb, 255:red, 155; green, 155; blue, 155 }  ,draw opacity=1 ]   (175.33,124) -- (169.33,94) ;
	\draw [color={rgb, 255:red, 155; green, 155; blue, 155 }  ,draw opacity=1 ]   (165.72,98.56) -- (387.61,61.89) ;
	\draw [color={rgb, 255:red, 155; green, 155; blue, 155 }  ,draw opacity=1 ]   (382.9,58.4) -- (388.33,89) ;
	\draw [color={rgb, 255:red, 155; green, 155; blue, 155 }  ,draw opacity=1 ]   (168.39,101.22) -- (172.39,93.89) ;
	\draw [color={rgb, 255:red, 155; green, 155; blue, 155 }  ,draw opacity=1 ]   (385.61,59.22) -- (381.28,65.89) ;
	
	\draw (177.33,127.4) node [anchor=north west][inner sep=0.75pt]    {$A_{0}$};
	\draw (98.36,170.76) node [anchor=north west][inner sep=0.75pt]    {$A_{1}$};
	\draw (66,51.4) node [anchor=north west][inner sep=0.75pt]    {$A_{3}$};
	\draw (376.33,96.4) node [anchor=north west][inner sep=0.75pt]    {$A_{4}$};
	\draw (177.33,253.4) node [anchor=north west][inner sep=0.75pt]    {$A_{5}$};
	\draw (21,121.4) node [anchor=north west][inner sep=0.75pt]    {$A_{2}$};
	\draw (425.83,86.4) node [anchor=north west][inner sep=0.75pt]    {$\tau _{4}$};
	\draw (267.33,58.4) node [anchor=north west][inner sep=0.75pt]    {$l_{4} \ $};

\end{tikzpicture}
\begin{center}
	Figure 3. A $5$-star-shaped graph in $\mathbb{R}^2$.
\end{center}

We define the length of a edge as $\ell_i := \vert \Gamma_i \vert$ and its curvature $\tau_i :=(A_i-A_0)/\ell_i$. We consider $\Gamma$ as a 1-rectifiable set and treat it as a $1$-varifold on $\mathbb{R}^N$. A simple calculation gives the first variation for all vector field $\theta$ :
\begin{align*}
	\delta \Gamma (\theta) = \sum_{i=1}^n \theta(A_i)\cdot \tau_i + \theta(A_0)\cdot \left( -\sum_{i=1}^n\tau_i\right).
\end{align*}
We define the first eigenvalue $\lambda_1>0$ of $\Gamma$ as in the Definition \ref{deflambda1} :
\begin{align*}
	\lambda_1:=\inf_{u \in C^\infty(\mathbb{R}^N) \setminus \{0\} \text{ s.t. } \sum_{i=0}^n u(A_i)=0} \left(\sum_{i=0}^nu(A_i)^2 \right)^{-1}\sum_{i=1}^n \int_{\Gamma_i}\vert \nabla_{\mathbb{R}\tau_i}u\vert^2 \; dx.
\end{align*}
Next, Reilly's inequality for star-shaped graphs is stated as follows :

\medskip\begin{proposition}\label{Reillygraphetoile}
	Let $\Gamma$ be a star-shaped graph in $\mathbb{R}^N$ as defined above. Then 
	\begin{align*}
		\lambda_1 \sum_{i=1}^n \ell_i \leqslant n + \left\vert \sum_{i=1}^n \tau_i \right\vert^2.
	\end{align*}
\end{proposition}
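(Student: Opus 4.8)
The plan is to transcribe, to this discrete setting, the proof of Proposition~\ref{reilly1} (equivalently, of Proposition~\ref{ReillyPolygon}). Regarding $\Gamma$ as a $1$-varifold, introduce the discrete measure
$$\mu:=\sum_{k=0}^{n}\delta_{A_k},$$
which will play the role that $\|V\|$ (resp.\ $\mu_A$) plays there, and the vector field $\Hm$ read from the first variation formula for $\delta\Gamma$ recalled above, namely $\Hm(A_i):=\tau_i$ for $1\leqslant i\leqslant n$ and $\Hm(A_0):=-\sum_{i=1}^{n}\tau_i$, so that $\delta\Gamma(\theta)=\int_{\R^N}\theta\cdot\Hm\,d\mu$ for every $\theta\in\mathcal{X}(\R^N)$. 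Note that the Dirichlet energy appearing in the definition of $\lambda_1$ is $\sum_{i=1}^{n}\int_{\Gamma_i}|\nabla_{\R\tau_i}u|^2\,dx=\int_{\Gamma}|\nabla_P u|^2\,d\mathcal{H}^1$, and that the total mass of $\Gamma$ as a varifold is $M=\mathcal{H}^1(\Gamma)=\sum_{i=1}^{n}\ell_i$ (the edges $\Gamma_i$ overlap only at $A_0$).

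Because $\lambda_1$, the lengths $\ell_i$ and the unit vectors $\tau_i$ are translation invariant, I would first translate $\Gamma$ so that $\sum_{k=0}^{n}A_k=0$, i.e.\ $\int_{\R^N}X\,d\mu=0$, where $X$ is the identity vector field. Then the coordinate function $X_j$ satisfies $\sum_{k=0}^{n}X_j(A_k)=0$ and is admissible in the definition of $\lambda_1$, so that
$$\lambda_1\sum_{k=0}^{n}X_j(A_k)^2\ \leqslant\ \sum_{i=1}^{n}\int_{\Gamma_i}|\nabla_{\R\tau_i}X_j|^2\,dx,\qquad 1\leqslant j\leqslant N.$$
Summing over $j$ (using $\sum_{j}\sum_{k}X_j(A_k)^2=\int_{\R^N}|X|^2\,d\mu$ and $\sum_{j}\sum_{i}\int_{\Gamma_i}|\nabla_{\R\tau_i}X_j|^2\,dx=\int_{\Gamma}|\nabla_P X|^2\,d\mathcal{H}^1$), then applying the Hsiung--Minkowski identity \eqref{eq1} with $m=1$ together with $\int_{\Gamma}{\rm div}_P(X)\,d\mathcal{H}^1=\delta\Gamma(X)=\int_{\R^N}X\cdot\Hm\,d\mu$, and finally Cauchy--Schwarz for vectors of $\R^N$ followed by the discrete Cauchy--Schwarz over the $n+1$ points $A_0,\dots,A_n$, I obtain
\begin{align*}
\lambda_1\int_{\R^N}|X|^2\,d\mu\
&\leqslant\ \sum_{j=1}^{N}\int_{\Gamma}|\nabla_P X_j|^2\,d\mathcal{H}^1
=\int_{\Gamma}{\rm div}_P(X)\,d\mathcal{H}^1\\
&=\frac{\Bigl(\int_{\Gamma}{\rm div}_P(X)\,d\mathcal{H}^1\Bigr)^2}{\sum_{i=1}^{n}\ell_i}
=\frac{\Bigl(\int_{\R^N}X\cdot\Hm\,d\mu\Bigr)^2}{\sum_{i=1}^{n}\ell_i}\\
&\leqslant\ \frac{\Bigl(\int_{\R^N}|X|\,|\Hm|\,d\mu\Bigr)^2}{\sum_{i=1}^{n}\ell_i}
\ \leqslant\ \frac{\bigl(\int_{\R^N}|X|^2\,d\mu\bigr)\bigl(\int_{\R^N}|\Hm|^2\,d\mu\bigr)}{\sum_{i=1}^{n}\ell_i}.
\end{align*}

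It then remains to evaluate $\int_{\R^N}|\Hm|^2\,d\mu=\sum_{i=1}^{n}|\tau_i|^2+\bigl|\sum_{i=1}^{n}\tau_i\bigr|^2=n+\bigl|\sum_{i=1}^{n}\tau_i\bigr|^2$, each $\tau_i$ being a unit vector, and to divide the last display by $\int_{\R^N}|X|^2\,d\mu=\sum_{k=0}^{n}|A_k|^2>0$ (a nondegenerate star-shaped graph cannot have all of $A_0,\dots,A_n$ at a single point): this yields exactly $\lambda_1\sum_{i=1}^{n}\ell_i\leqslant n+\bigl|\sum_{i=1}^{n}\tau_i\bigr|^2$. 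The argument being a rather direct transcription, I do not expect a genuine obstacle. The steps deserving a careful word are the use of the Hsiung--Minkowski identity \eqref{eq1} and of $\int_{\Gamma}{\rm div}_P(X)\,d\mathcal{H}^1=\delta\Gamma(X)$ with the non-compactly-supported identity field $X$ (harmless since $\Gamma$ is bounded: replace $X$ by $\chi X$ with $\chi\in C_c^\infty(\R^N)$ equal to $1$ near $\Gamma$, which affects no term), and the sign bookkeeping behind $\delta\Gamma=\Hm\mu$ — that the boundary part of the first variation, carried both by the leaves $A_1,\dots,A_n$ and by the centre $A_0$, is exactly the curvature measure entering the Reilly inequality. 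This is the one genuinely new feature compared with the polygon case, where $\Gamma$ was a closed loop rather than a hub with free ends.
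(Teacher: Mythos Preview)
Your proof is correct and follows essentially the same approach as the paper: translate so that $\sum_{k=0}^n A_k=0$, use the coordinate functions $X_j$ as test functions, sum over $j$, then apply Hsiung--Minkowski and Cauchy--Schwarz. The only cosmetic difference is that you package the computation in the varifold language (introducing $\mu$ and $\Hm$ explicitly and invoking \eqref{eq1}), whereas the paper computes $\sum_i\int_{\Gamma_i}|\tau_i(\tau_i\cdot e_j)|^2\,dx=\sum_i\ell_i(\tau_i\cdot e_j)^2$ and $\sum_i\ell_i=\sum_i\tau_i\cdot(A_i-A_0)$ by hand before applying Cauchy--Schwarz to the $(n{+}1)$-tuple $(A_0,\dots,A_n)$ against $(-\sum\tau_i,\tau_1,\dots,\tau_n)$; the remark immediately following the proposition makes precisely your identification $\sum_{i=0}^n|\Hm_i|^2=n+\bigl|\sum_i\tau_i\bigr|^2$.
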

\begin{remark} In fact the above inequality can be rewritten in the following form :
\begin{align*}
		\lambda_1 \mathcal{H}^1(\Gamma) \leqslant \sum_{i=0}^n\vert \mathbf{H}_i \vert^2.
	\end{align*}
Indeed : 
	\begin{align*}
		\vert \mathbf{H}_0 \vert =\left\vert \sum_{i=1}^n \tau_i \right\vert, \textnormal{ and }\quad \vert \mathbf{H}_i \vert = \vert \tau_i \vert = 1.
	\end{align*}
\end{remark}
\begin{proof}
	The graph $\Gamma$ is not a priori centered, which means that $A_0$ can be different both from 0 and from its center of mass. Suppose that $\sum_{i=0}^nA_i=0$ ; its center of mass is $0$. We use the coordinate projections $x_j(X):= X \cdot e_j$ as test functions, where $(e_j)_{1\leqslant j \leqslant N}$ is the canonical basis of $\mathbb{R}^N$. By definition of $\lambda_1$, we have
	\begin{align*}
		\lambda_1 \sum_{i=0}^n(A_i \cdot e_j)^2 \leqslant \sum_{i=1}^n\int_{\Gamma_i}\vert \tau_i(\tau_i \cdot e_j)\vert^2 dx = \sum_{i=1}^n \ell_i (\tau_i \cdot e_j)^2.
	\end{align*}
	We obtain by summing over $j$ :
	\begin{align*}
		\lambda_1 \sum_{i=0}^n \vert A_i \vert^2 \leqslant \sum_{i=1}^n \ell_i \vert \tau_i \vert^2 = \sum_{i=1}^n\ell_i,
	\end{align*}
	Then, noting that $\ell_i \vert \tau_i \vert^2 = \tau_i \cdot (A_i-A_0)$, by multiplying by the sum of the lengths on each side, 
	\begin{align}\label{holderreillyetoile}
		\lambda_1\left(\sum_{i=1}^n\ell_i\right)\left(\sum_{i=0}^n \vert A_i \vert^2\right) &\leqslant \left(\sum_{i=1}^n \tau_i\cdot (A_i-A_0)\right)^2 \notag \\&\leqslant \left( \sum_{i=0}^n \vert A_i\vert^2\right)\left(\sum_{i=1}^n\vert \tau_i \vert^2 + \left\vert \sum_{i=1}^n \tau_i \right\vert^2\right).
	\end{align}
\end{proof}
We now focus on the case of equality in Reilly. We parametrize each edge $\Gamma_i$ by $\Gamma_i(t):= A_0 + t(A_i-A_0)$ for $t\in[0,1]$, note that choosing a parametrization on this interval allows the length of the segment to appear in the derivatives. Let $u$ be an eigenfunction, we show in the same manner as in Proposition \ref{Equality-Case} that there exist $n+1$ real numbers $\alpha, \beta_i$, $1 \leqslant i \leqslant n$, such that $u$ satisfies the following system of equations 
\begin{align}\label{eqgraphetoile}
	\left\{
	\begin{array}{ll}
		u\circ \Gamma_i(t)=\alpha +t \beta_i, \quad   t\in[0,1] \text{ and }  1\leqslant i \leqslant n,\\
		\lambda_1(\alpha+\beta_i)= \beta_i/\ell_i, \quad1\leqslant i  \leqslant n,\\ \displaystyle
		\lambda_1\alpha = - \sum_{i=1}^n \beta_i/\ell_i.
	\end{array}
	\right. 
\end{align}
The continuity of the eigenfunction is contained in the fact that, on each edge, at the vertex $A_0$, the function takes the value $\alpha$.

\medskip\begin{proposition}
	Let $\Gamma$ be a star-shaped graph defined as above. The graph $\Gamma$ satisfies the case of equality in the Reilly inequality \ref{Reillygraphetoile} if, and only if, $A_0=0$, all the vertices belong to the same circle of radius $1/\lambda_1$, and $A_0$ is stationary, namely $\sum_{i=1}^n \tau_i = 0$.
\end{proposition}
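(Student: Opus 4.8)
The plan is to retrace equality through the proof of Proposition~\ref{Reillygraphetoile}. Since a translation changes neither $\lambda_1$, nor the lengths $\ell_i$, nor the tangents $\tau_i$, I may assume $\sum_{i=0}^n A_i=0$, so that each coordinate projection $x_j\colon X\mapsto X\cdot e_j$ is an admissible test function for $\lambda_1$ (we take $n\geqslant 2$; a $1$-star is a single segment, for which equality always holds but $A_0=0$ cannot be arranged). The proof of Proposition~\ref{Reillygraphetoile} passes through the inequalities $\lambda_1\sum_{i=0}^n(A_i\cdot e_j)^2\leqslant\sum_{i=1}^n\ell_i(\tau_i\cdot e_j)^2$, one for each $j$, then sums them over $j$ and applies Cauchy--Schwarz as in~\eqref{holderreillyetoile}. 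As these $N$ inequalities are nonnegative and their sum becomes an equality, equality in the final estimate forces each one: $\lambda_1\sum_{i=0}^n(A_i\cdot e_j)^2=\sum_{i=1}^n\ell_i(\tau_i\cdot e_j)^2$ for every $j$ (the Cauchy--Schwarz step will then be an equality as well, but we shall not need that separately).

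The next step is to exploit these coordinatewise equalities. Fix $j$. If $x_j$ does not vanish identically on $\{A_0,\dots,A_n\}$, the equality says that its Rayleigh quotient equals $\lambda_1$, so $x_j$ is an eigenfunction and obeys the system~\eqref{eqgraphetoile} with $\alpha=A_0\cdot e_j$ and $\beta_i=(A_i-A_0)\cdot e_j$; since $\alpha+\beta_i=A_i\cdot e_j$ and $\beta_i/\ell_i=\tau_i\cdot e_j$, the last two lines of~\eqref{eqgraphetoile} become $\lambda_1 A_i\cdot e_j=\tau_i\cdot e_j$ for $1\leqslant i\leqslant n$ and $\lambda_1 A_0\cdot e_j=-\sum_{i=1}^n\tau_i\cdot e_j$. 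If instead $x_j\equiv 0$ on the vertices, then $A_i\cdot e_j=0$ for all $i$, so the right-hand side of the coordinate equality vanishes, forcing $\tau_i\cdot e_j=0$ for all $i$, and the same two identities hold trivially. Letting $e_j$ run over the canonical basis gives $\tau_i=\lambda_1 A_i$ for $1\leqslant i\leqslant n$ and $\lambda_1 A_0=-\sum_{i=1}^n\tau_i$; in particular $|A_i|=|\tau_i|/\lambda_1=1/\lambda_1$ for $1\leqslant i\leqslant n$, which is the ``common circle'' part of the conclusion.

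It remains to prove $A_0=0$; granting this, $\sum_{i=1}^n\tau_i=-\lambda_1 A_0=0$ (so $A_0$ is stationary) and $\ell_i=|A_i-A_0|=|A_i|=1/\lambda_1$, which completes the forward implication. Combining $\lambda_1 A_i=\tau_i$ with $\tau_i=(A_i-A_0)/\ell_i$ gives $A_0=(1-\lambda_1\ell_i)A_i$ for every $i$. If $A_0\neq 0$, then $1-\lambda_1\ell_i\neq 0$ and $A_i=(1-\lambda_1\ell_i)^{-1}A_0$ for all $i$, so every segment $\Gamma_i=[A_0,A_i]$ lies on the line $\mathbb{R}A_0$; since no $\Gamma_i$ is contained in another, at most one edge can point in each of the two directions of this line, forcing $n\leqslant 2$, and when $n=2$ the two tangents are opposite, so $\sum_{i=1}^n\tau_i=0$ and hence $\lambda_1 A_0=0$ --- a contradiction. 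Thus $A_0=0$. I expect this last step to be the only real obstacle: the equality conditions by themselves only give $\lambda_1 A_0=-\lambda_1\sum_{i=1}^n A_i$, i.e.\ $A_0=-\sum_{i=1}^n A_i$, which is exactly the centering normalization and contains no new information; the needed leverage comes precisely from the relation $A_0=(1-\lambda_1\ell_i)A_i$ together with the no-containment hypothesis on the star-shaped graph.

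For the converse I would simply evaluate both sides of Proposition~\ref{Reillygraphetoile}. If $A_0=0$, $|A_i|=1/\lambda_1$ for $1\leqslant i\leqslant n$, and $\sum_{i=1}^n\tau_i=0$, then $\ell_i=|A_i-A_0|=|A_i|=1/\lambda_1$, so $\sum_{i=1}^n\ell_i=n/\lambda_1$ and the left-hand side equals $\lambda_1\sum_{i=1}^n\ell_i=n$, while the right-hand side equals $n+\big|\sum_{i=1}^n\tau_i\big|^2=n$. Hence equality holds in Proposition~\ref{Reillygraphetoile}, as desired.
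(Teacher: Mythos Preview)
Your proof is correct. The forward direction follows essentially the same line as the paper --- you both arrive at $A_0=(1-c\ell_i)A_i$ and use the non-containment hypothesis to force $A_0=0$ --- but you organize it a little differently: instead of first extracting a proportionality constant $c$ from the Cauchy--Schwarz equality in~\eqref{holderreillyetoile} and identifying $c=\lambda_1$ only at the end via the Reilly equality, you read off $c=\lambda_1$ directly from the eigenfunction system~\eqref{eqgraphetoile} applied to the coordinate projections. This is a clean shortcut.

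The converse is where the two arguments genuinely diverge. The paper assumes $\Gamma$ is stationary at $A_0$ with all edge lengths equal to $\ell$, and then \emph{computes the full spectrum} $\{0,\,1/\ell,\,(n+1)/\ell\}$ by analyzing the determinant of the matrix $M_{n+1}(\lambda)$; this yields $\lambda_1=1/\ell$ and hence equality. You instead take the hypothesis literally --- the vertices lie on the circle of radius $1/\lambda_1$ --- and simply evaluate both sides of the Reilly inequality to get $n=n$. Your route is shorter and perfectly adequate for the proposition as stated; the paper's route does more work but yields the spectrum of the equilateral stationary star as a byproduct, which has independent interest.
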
\leavevmode

\begin{remark}
	The term "stationary" refers to graphs like this :
	
	\begin{center}
		
		\tikzset{every picture/.style={line width=0.75pt}} 
		
		\begin{tikzpicture}[x=0.75pt,y=0.75pt,yscale=-1,xscale=1]
			
			\draw    (88.17,64.04) -- (116.58,119.13) ;
			\draw [shift={(88.17,64.04)}, rotate = 62.71] [color={rgb, 255:red, 0; green, 0; blue, 0 }  ][fill={rgb, 255:red, 0; green, 0; blue, 0 }  ][line width=0.75]      (0, 0) circle [x radius= 3.35, y radius= 3.35]   ;
			\draw    (116.58,119.13) -- (54.02,119.13) ;
			\draw [shift={(54.02,119.13)}, rotate = 180] [color={rgb, 255:red, 0; green, 0; blue, 0 }  ][fill={rgb, 255:red, 0; green, 0; blue, 0 }  ][line width=0.75]      (0, 0) circle [x radius= 3.35, y radius= 3.35]   ;
			\draw [shift={(116.58,119.13)}, rotate = 180] [color={rgb, 255:red, 0; green, 0; blue, 0 }  ][fill={rgb, 255:red, 0; green, 0; blue, 0 }  ][line width=0.75]      (0, 0) circle [x radius= 3.35, y radius= 3.35]   ;
			\draw    (116.58,119.13) -- (82.17,171.54) ;
			\draw [shift={(82.17,171.54)}, rotate = 123.29] [color={rgb, 255:red, 0; green, 0; blue, 0 }  ][fill={rgb, 255:red, 0; green, 0; blue, 0 }  ][line width=0.75]      (0, 0) circle [x radius= 3.35, y radius= 3.35]   ;
			\draw [shift={(116.58,119.13)}, rotate = 123.29] [color={rgb, 255:red, 0; green, 0; blue, 0 }  ][fill={rgb, 255:red, 0; green, 0; blue, 0 }  ][line width=0.75]      (0, 0) circle [x radius= 3.35, y radius= 3.35]   ;
			\draw    (116.58,119.13) -- (147.67,174.04) ;
			\draw [shift={(147.67,174.04)}, rotate = 60.49] [color={rgb, 255:red, 0; green, 0; blue, 0 }  ][fill={rgb, 255:red, 0; green, 0; blue, 0 }  ][line width=0.75]      (0, 0) circle [x radius= 3.35, y radius= 3.35]   ;
			\draw  [color={rgb, 255:red, 155; green, 155; blue, 155 }  ,draw opacity=1 ][dash pattern={on 0.84pt off 2.51pt}] (54.02,119.13) .. controls (54.02,84.57) and (82.03,56.56) .. (116.58,56.56) .. controls (151.14,56.56) and (179.15,84.57) .. (179.15,119.13) .. controls (179.15,153.68) and (151.14,181.69) .. (116.58,181.69) .. controls (82.03,181.69) and (54.02,153.68) .. (54.02,119.13) -- cycle ;
			\draw    (147.17,65.54) -- (116.58,119.13) ;
			\draw [shift={(116.58,119.13)}, rotate = 119.72] [color={rgb, 255:red, 0; green, 0; blue, 0 }  ][fill={rgb, 255:red, 0; green, 0; blue, 0 }  ][line width=0.75]      (0, 0) circle [x radius= 3.35, y radius= 3.35]   ;
			\draw [shift={(147.17,65.54)}, rotate = 119.72] [color={rgb, 255:red, 0; green, 0; blue, 0 }  ][fill={rgb, 255:red, 0; green, 0; blue, 0 }  ][line width=0.75]      (0, 0) circle [x radius= 3.35, y radius= 3.35]   ;
			\draw    (179.15,119.13) -- (116.58,119.13) ;
			\draw [shift={(116.58,119.13)}, rotate = 180] [color={rgb, 255:red, 0; green, 0; blue, 0 }  ][fill={rgb, 255:red, 0; green, 0; blue, 0 }  ][line width=0.75]      (0, 0) circle [x radius= 3.35, y radius= 3.35]   ;
			\draw [shift={(179.15,119.13)}, rotate = 180] [color={rgb, 255:red, 0; green, 0; blue, 0 }  ][fill={rgb, 255:red, 0; green, 0; blue, 0 }  ][line width=0.75]      (0, 0) circle [x radius= 3.35, y radius= 3.35]   ;
			\draw    (289.58,120.13) -- (227.02,120.13) ;
			\draw [shift={(227.02,120.13)}, rotate = 180] [color={rgb, 255:red, 0; green, 0; blue, 0 }  ][fill={rgb, 255:red, 0; green, 0; blue, 0 }  ][line width=0.75]      (0, 0) circle [x radius= 3.35, y radius= 3.35]   ;
			\draw [shift={(289.58,120.13)}, rotate = 180] [color={rgb, 255:red, 0; green, 0; blue, 0 }  ][fill={rgb, 255:red, 0; green, 0; blue, 0 }  ][line width=0.75]      (0, 0) circle [x radius= 3.35, y radius= 3.35]   ;
			\draw    (289.58,120.13) -- (320.67,175.04) ;
			\draw [shift={(320.67,175.04)}, rotate = 60.49] [color={rgb, 255:red, 0; green, 0; blue, 0 }  ][fill={rgb, 255:red, 0; green, 0; blue, 0 }  ][line width=0.75]      (0, 0) circle [x radius= 3.35, y radius= 3.35]   ;
			\draw  [color={rgb, 255:red, 155; green, 155; blue, 155 }  ,draw opacity=1 ][dash pattern={on 0.84pt off 2.51pt}] (227.02,120.13) .. controls (227.02,85.57) and (255.03,57.56) .. (289.58,57.56) .. controls (324.14,57.56) and (352.15,85.57) .. (352.15,120.13) .. controls (352.15,154.68) and (324.14,182.69) .. (289.58,182.69) .. controls (255.03,182.69) and (227.02,154.68) .. (227.02,120.13) -- cycle ;
			\draw    (320.17,66.54) -- (289.58,120.13) ;
			\draw [shift={(289.58,120.13)}, rotate = 119.72] [color={rgb, 255:red, 0; green, 0; blue, 0 }  ][fill={rgb, 255:red, 0; green, 0; blue, 0 }  ][line width=0.75]      (0, 0) circle [x radius= 3.35, y radius= 3.35]   ;
			\draw [shift={(320.17,66.54)}, rotate = 119.72] [color={rgb, 255:red, 0; green, 0; blue, 0 }  ][fill={rgb, 255:red, 0; green, 0; blue, 0 }  ][line width=0.75]      (0, 0) circle [x radius= 3.35, y radius= 3.35]   ;
			\draw    (428.71,62.17) -- (447.58,122.13) ;
			\draw [shift={(428.71,62.17)}, rotate = 72.53] [color={rgb, 255:red, 0; green, 0; blue, 0 }  ][fill={rgb, 255:red, 0; green, 0; blue, 0 }  ][line width=0.75]      (0, 0) circle [x radius= 3.35, y radius= 3.35]   ;
			\draw    (447.58,122.13) -- (429.21,181.17) ;
			\draw [shift={(429.21,181.17)}, rotate = 107.29] [color={rgb, 255:red, 0; green, 0; blue, 0 }  ][fill={rgb, 255:red, 0; green, 0; blue, 0 }  ][line width=0.75]      (0, 0) circle [x radius= 3.35, y radius= 3.35]   ;
			\draw [shift={(447.58,122.13)}, rotate = 107.29] [color={rgb, 255:red, 0; green, 0; blue, 0 }  ][fill={rgb, 255:red, 0; green, 0; blue, 0 }  ][line width=0.75]      (0, 0) circle [x radius= 3.35, y radius= 3.35]   ;
			\draw    (447.58,122.13) -- (466.71,181.67) ;
			\draw [shift={(466.71,181.67)}, rotate = 72.19] [color={rgb, 255:red, 0; green, 0; blue, 0 }  ][fill={rgb, 255:red, 0; green, 0; blue, 0 }  ][line width=0.75]      (0, 0) circle [x radius= 3.35, y radius= 3.35]   ;
			\draw  [color={rgb, 255:red, 155; green, 155; blue, 155 }  ,draw opacity=1 ][dash pattern={on 0.84pt off 2.51pt}] (385.02,122.13) .. controls (385.02,87.57) and (413.03,59.56) .. (447.58,59.56) .. controls (482.14,59.56) and (510.15,87.57) .. (510.15,122.13) .. controls (510.15,156.68) and (482.14,184.69) .. (447.58,184.69) .. controls (413.03,184.69) and (385.02,156.68) .. (385.02,122.13) -- cycle ;
			\draw    (467.71,63.17) -- (447.58,122.13) ;
			\draw [shift={(447.58,122.13)}, rotate = 108.85] [color={rgb, 255:red, 0; green, 0; blue, 0 }  ][fill={rgb, 255:red, 0; green, 0; blue, 0 }  ][line width=0.75]      (0, 0) circle [x radius= 3.35, y radius= 3.35]   ;
			\draw [shift={(467.71,63.17)}, rotate = 108.85] [color={rgb, 255:red, 0; green, 0; blue, 0 }  ][fill={rgb, 255:red, 0; green, 0; blue, 0 }  ][line width=0.75]      (0, 0) circle [x radius= 3.35, y radius= 3.35]   ;
			
			\draw (61.02,176.42) node [anchor=north west][inner sep=0.75pt]    {$A_{1}$};
			\draw (64.33,44.4) node [anchor=north west][inner sep=0.75pt]    {$A_{3}$};
			\draw (149.67,177.44) node [anchor=north west][inner sep=0.75pt]    {$A_{7}$};
			\draw (31.67,120.73) node [anchor=north west][inner sep=0.75pt]    {$A_{2}$};
			\draw (148.67,45.84) node [anchor=north west][inner sep=0.75pt]    {$A_{4}$};
			\draw (181.15,122.53) node [anchor=north west][inner sep=0.75pt]    {$A_{5}$};
			\draw (80.33,123.18) node [anchor=north west][inner sep=0.75pt]    {$A_{0}$};
			\draw (134.06,86.73) node [anchor=north west][inner sep=0.75pt]  [font=\scriptsize]  {$\lambda _{1}^{-1}$};
			\draw (205.36,122.76) node [anchor=north west][inner sep=0.75pt]    {$A_{1}$};
			\draw (322.67,178.44) node [anchor=north west][inner sep=0.75pt]    {$A_{3}$};
			\draw (322.67,49.07) node [anchor=north west][inner sep=0.75pt]    {$A_{2}$};
			\draw (267.33,124.84) node [anchor=north west][inner sep=0.75pt]    {$A_{0}$};
			\draw (307.06,87.73) node [anchor=north west][inner sep=0.75pt]  [font=\scriptsize]  {$\lambda _{1}^{-1}$};
			\draw (408.02,182.42) node [anchor=north west][inner sep=0.75pt]    {$A_{1}$};
			\draw (470.83,44.4) node [anchor=north west][inner sep=0.75pt]    {$A_{3}$};
			\draw (405.67,42.23) node [anchor=north west][inner sep=0.75pt]    {$A_{2}$};
			\draw (468.71,185.07) node [anchor=north west][inner sep=0.75pt]    {$A_{4}$};
			\draw (421.83,113.18) node [anchor=north west][inner sep=0.75pt]    {$A_{0}$};
			\draw (464.56,87.73) node [anchor=north west][inner sep=0.75pt]  [font=\scriptsize]  {$\lambda _{1}^{-1}$};

		\end{tikzpicture}
		
	\end{center}
	\begin{center}
		Figure 4. Exemples of stationnary graphs.
	\end{center}
\end{remark}
\begin{proof}
	In the equation (\ref{holderreillyetoile}), we are in the equality case of the H\"older inequality and there are $c \in \mathbb{R}$ such that 
	\begin{align*}
\forall	\leqslant i \leqslant n\quad, 	\tau_i = cA_i, \quad 1 , \quad \textnormal{ and }  -\sum_{i=1}^n \tau_i = c A_0.
	\end{align*}
From the definition of $\tau_i$, we get $A_0=(1-c\ell_i)A_i$ for any $1 \leqslant i \leqslant n$. If $n=2$ and $A_0,A_1,A_2$ are aligned, then $\tau_1+\tau_2=0$ and $A_0=0$. Otherwise, there exists two indices $i,j$ such that $A_0,A_i,A_j$ are not aligned. The equalities $A_0=(1-c\ell_i)A_i$ and $A_0=(1-c\ell_j)A_j$ still imply that $A_0=0$. As a consequence we have $c=1/\ell_i$ and 
	\begin{align*}
		\sum_{i=1}^n \tau_i = 0.
	\end{align*}
	To conclude, we use the equality 
	$$  \lambda_1\frac{n}{c} =  \lambda_1 \sum_{i=1}^n \ell_i = n + \left\vert \sum_{i=1}^n \tau_i \right\vert^2 = n, $$
	so $ \lambda_1 = c = 1/\ell.$
	Conversely, we suppose $\Gamma$ is stationnary in $A_0$ and $\ell_i=\ell$. Let $u$ be any eigenfunction of $\Gamma$ for the eigenvalue $\lambda$ satisfying (\ref{eqgraphetoile}), then we have a system of $n+1$ equations with $n+1$ variables :
	\begin{align*}
		M_{n+1}(\lambda) \begin{pmatrix}
			\alpha\\
			\beta_1\\
			\vdots \\
			\beta_n \\
		\end{pmatrix} =
		\begin{pmatrix}
			\lambda & \lambda - 1/\ell &  \ldots & 0 \\
			\vdots & \vdots &\ddots & \vdots\\
			\lambda & 0 & \ldots   & \lambda-1/\ell \\
			\lambda & 1/\ell  &\ldots & 1/\ell \\
		\end{pmatrix}
		\begin{pmatrix}
			\alpha\\
			\beta_1\\
			\vdots \\
			\beta_n \\
		\end{pmatrix}
		=
		\begin{pmatrix}
			0 \\
			0\\
			\vdots\\
			0 \\
		\end{pmatrix}.
	\end{align*}
	We want to know all the solutions $\lambda$ of the equations $\textnormal{det}(M_{n+1}(\lambda))=0$. The calculation of this determinant gives a recurrence relation
	\begin{align*}
		\textnormal{det}(M_{n+1}(\lambda))=\frac{\lambda}{\ell}(1/\ell-\lambda)^{n-1}+(1/\ell-\lambda)\textnormal{det}(M_n(\lambda)),
	\end{align*}
	whose general solution is 
	\begin{align*}
		\textnormal{det} (M_{n+1}(\lambda)) = \frac{n\lambda}{\ell}(1/\ell-\lambda)^{n-1}+\lambda(1/\ell-\lambda)^n.
	\end{align*}
	The three solutions are $0$, $1/\ell$ and $(1+n)/\ell$. This proves that 
	$$\textnormal{Spec}(\Gamma) \subseteq \{0,1/\ell,(1+n)/\ell\}.$$ But for any values $\lambda$ in $\{0,1/\ell,(1+n)/\ell\}$, there exist an non-zero eigenfunction and $\lambda$ is an eigenvalue. Finally, 
	$$\textnormal{Spec}(\Gamma) =  \{0,1/\ell,(1+n)/\ell\},$$
	and $\lambda_1=1/\ell$ so the graph $\Gamma$ satisfies the case of equality in the Reilly inequality \ref{Reillygraphetoile}.
\end{proof}


\section{Appendix 1}

In this section, we give a proof of an argument used at then end of the proof of Proposition \ref{equalityH2}. Indeed, we have used the following result.

\medskip\begin{proposition}\label{f-constant} Let $(M^m,g)$ be a connected Riemannian manifold and $f\in L^1(M)$. If for any smooth vector field $X$ with compact support,
$$\int_M f{\rm div}(X)dv_g=0,$$
then $f$ is constant on $M$.
\end{proposition}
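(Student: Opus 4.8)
The plan is to deduce from the hypothesis that the distributional gradient of $f$ vanishes, to reduce this to a local statement in coordinate charts, to invoke the classical fact that an $L^1_{\mathrm{loc}}$ function with vanishing distributional derivatives on a connected open subset of $\R^m$ is a.e. constant, and finally to globalize using the connectedness of $M$.

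First I would fix a coordinate chart $(U,(x^1,\dots,x^m))$ of $M$, which we may assume connected, on which $dv_g=\sqrt{|g|}\,dx$ with $\sqrt{|g|}$ smooth and positive. Given $\varphi\in C^\infty_c(U)$ and $1\leqslant k\leqslant m$, consider the vector field $X$ whose components are $X^i=\varphi\,(\sqrt{|g|})^{-1}\delta^i_k$; this is a smooth vector field with compact support in $U$, which extends by zero to a smooth compactly supported vector field on $M$. Since in coordinates $\mathrm{div}(X)=(\sqrt{|g|})^{-1}\partial_i(\sqrt{|g|}\,X^i)$, we get $\mathrm{div}(X)\,dv_g=\partial_i(\sqrt{|g|}\,X^i)\,dx=\partial_k\varphi\,dx$. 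Hence the hypothesis becomes
$$\int_U f\,\partial_k\varphi\,dx=0\qquad\text{for all }\varphi\in C^\infty_c(U),\ 1\leqslant k\leqslant m,$$
which says precisely that all distributional partial derivatives of $f$ vanish on $U$.

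Next I would use the elementary lemma: if $h\in L^1_{\mathrm{loc}}(\Omega)$ on a connected open $\Omega\subset\R^m$ satisfies $\partial_k h=0$ in $\mathcal{D}'(\Omega)$ for every $k$, then $h$ coincides a.e. with a constant on $\Omega$. This is proved by mollification: on any ball $B$ compactly contained in $\Omega$ and for $\varepsilon$ small the mollification $h*\rho_\varepsilon$ is smooth with $\partial_k(h*\rho_\varepsilon)=(\partial_k h)*\rho_\varepsilon=0$, hence constant on $B$; letting $\varepsilon\to0$ shows $h$ is a.e. constant on $B$, and chaining overlapping balls inside the connected set $\Omega$ promotes this to a single constant. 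Applying this with $\Omega=U$ shows that $f$ equals a constant $c_U$ a.e. on each connected chart $U$.

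Finally I would globalize. Cover $M$ by connected charts $\{U_\alpha\}$; on each one $f=c_\alpha$ a.e. If $U_\alpha\cap U_\beta\neq\emptyset$, this intersection is a nonempty open set, hence of positive $dv_g$-measure, so $c_\alpha=c_\beta$. Therefore $\alpha\mapsto c_\alpha$ defines a locally constant, hence continuous, function on $M$; since $M$ is connected it is a single constant $c$, and $f=c$ $\,dv_g$-a.e. on $M$. The only step that is not pure bookkeeping is the local "vanishing weak gradient implies constant" lemma, but this is entirely standard; the passage through charts and the connectedness argument are routine.
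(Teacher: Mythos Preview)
Your proof is correct and follows essentially the same strategy as the paper: localize to a chart, choose a test vector field so that $\mathrm{div}(X)\,dv_g$ reduces to $\partial_k\varphi\,dx$, deduce that the distributional partial derivatives of $f$ vanish, and globalize by connectedness. Your presentation is in fact more direct than the paper's---the paper routes the same computation through the associated $1$-form and the codifferential before arriving at the identical test vector field $X^j=\psi(\sqrt{G})^{-1}\delta^j_\ell$, and it leaves the step ``vanishing weak derivatives $\Rightarrow$ locally constant'' implicit, whereas you spell it out via mollification.
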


\begin{proof} Let $(U,\Phi)$ be a local chart of $M$ and let $(x_1,\cdots,x_m)$ be the coordinates associated to $(U,\Phi)$. Let $(g_{ij})_{1\leqslant i,j\leqslant m}=g\left(\partial/\partial x_i,\partial/\partial x_j\right)_{1\leqslant i,j\leqslant m}$, $(g^{ij})_{1\leqslant i,j\leqslant m}=(g_{ij})_{1\leqslant i,j\leqslant m}^{-1}$ and $G=\det(g_{ij})$. Let $X$ be a smooth vector field such that ${\rm supp}(X)\subset U$. For more convenience, we will work with the differential $1$-form $\omega$ associated to $X$ (i.e. for any $p\in U$ and $u\in T_pM$, $\omega_p(u)=g(X_p,u)$). In this case ${\rm div}(X)=-\delta\omega$ where $\delta$ is the codifferential. Then for any $p\in U$, we have 
$$\delta\omega=-\sum_{i=1}^m(D_{e_i}\omega)(e_i),$$
where $(e_i)_{1\leqslant i\leqslant m}$ is a $g$-orthonormal basis of $T_pM$ and $D$ is the Riemannian connection of $g$. In the local chart $(U,\Phi)$,
\begin{align}\label{int-f-delta-omega}
    \int_Mf{\rm div}(X)dv_g=\int_Uf\delta\omega dv_g=\int_{\Phi(U)}(f\circ\Phi^{-1})(\delta\omega\circ\Phi^{-1})\sqrt{G}dx_1\cdots dx_m
\end{align}
Now, there exist smooth functions such that ${\rm supp}(a_i)\subset U$ and $$\omega=\displaystyle\sum_{i=1}^ma_idx_i.$$ Let us compute $\delta\omega$. We consider $p\in U$ and $(e_i)_{1\leqslant i\leqslant m}$ be a $g$-orthonormal basis of $T_pM$. Then at $p$,
\begin{align*}
\delta\omega&=-\sum_{i=1}^m\sum_{j=1}^m D_{e_i}(a_jdx_j)(e_i)\\
&=-\sum_{i=1}^m\sum_{j=1}^me_i(a_j)dx_j(e_i)-\sum_{i=1}^m\sum_{j=1}^ma_j(D_{e_i}dx_j)(e_i)\\
&=-\sum_{j=1}^mdx_j(\nabla a_j)+\sum_{j=1}^ma_j\Delta x_j
\end{align*}
where $\nabla a_j$ and $\Delta x_j$ are respectively the gradient of $a_j$ and the Laplace-Beltrami operator of $x_j$. It is well known that in local coordinates,
$$\nabla a_j=\sum_{1\leqslant k,\ell\leqslant m}g^{k\ell}\frac{\partial a_j}{\partial x_k}\frac{\partial}{\partial x_{\ell}},$$
and 
$$\sum_{j=1}^mdx_j(\nabla a_j)=\sum_{1\leqslant j,k\leqslant m}g^{kj}\frac{\partial a_j}{\partial x_k}.$$
Moreover,
$$\Delta x_j=-\frac{1}{\sqrt{G}}\sum_{1\leqslant k,\ell\leqslant m}\frac{\partial}{\partial x_k}\left(\sqrt{G}g^{k\ell}\frac{\partial x_j}{\partial x_{\ell}}\right)=-\frac{1}{\sqrt{G}}\sum_{k=1}^m\frac{\partial}{\partial x_k}(\sqrt{G}g^{kj})$$
Reporting these two equalities in (\ref{int-f-delta-omega}), we get
\begin{align*}
\int_Uf\delta\omega dv_g&=-\sum_{1\leqslant j,k\leqslant m}\int_{\Phi(U)}(f\circ\Phi^{-1})g^{kj}\frac{\partial a_j}{\partial x_k}\sqrt{G}dx_1\cdots dx_m\\
&-\sum_{1\leqslant j,k\leqslant m}\int_{\Phi(U)}(f\circ\Phi^{-1})a_j\frac{\partial}{\partial x_k}(\sqrt{G}g^{kj})dx_1\cdots dx_m\\
&=-\sum_{1\leqslant j,k\leqslant m}\int_{\Phi(U)}(f\circ\Phi^{-1})\frac{\partial}{\partial x_k}(a_jg^{kj}\sqrt{G})dx_1\cdots dx_m\\
&=\sum_{1\leqslant j,k\leqslant m}\left\langle\frac{\partial(f\circ\Phi^{-1})}{\partial x_k},a_jg^{kj}\sqrt{G}\right\rangle.
\end{align*}
Let $\psi\in C^{\infty}_c(U)$. For any $(j,\ell)\in\{1,\cdots,m\}^2$, consider $$a_{j\ell}=\dfrac{\psi}{\sqrt{G}}g_{j\ell}, \textnormal{ and }\omega_{\ell}=\displaystyle\sum_{i=1}^ma_{i\ell}dx_i.$$ Then an easy computation shows that 
$$\int_Uf\delta\omega_{\ell} dv_g=\left\langle\frac{\partial(f\circ\Phi^{-1})}{\partial x_{\ell}},\psi\right\rangle.$$
If we assume that $$\displaystyle\int_M f{\rm div}(X)dv_g=0$$ for any smooth vector field $X$ with compact support, then for any $\psi\in C_c^{\infty}(U)$ and $\ell\in\{1,\cdots,m\}$, we have
$$\int_Uf\delta\omega_{\ell} dv_g=\left\langle\frac{\partial(f\circ\Phi^{-1})}{\partial x_{\ell}},\psi\right\rangle=0$$
It follows that $f$ is constant on $U$. The connexity of $M$ allows us to conclude that $f$ is constant on $M$.
\end{proof}
  
 \section{Appendix 2}   
 In this section we provide a MATLAB Code in order to produce the results of the computations needed in the proof of Proposition \ref{calculLambda1}.
 
 \begin{verbatim}
%-- Begining of the Code
clear all
syms lambda;
syms theta;

%-- definition of the vector containing the lenghts of the edges 
L=[2*cos(theta),1/cos(theta),2*tan(theta)*sin(theta),1/cos(theta),2*cos(theta)];

%--  Computation of the product of the matrices M_i(lambda)
I=[1,0;0,1]; % definition of the identiy matrix
M=I; % initialisation of M

for i=1:4
M=[1 , 1 ; -lambda* L(i+1), L(i+1)/L(i)-lambda*L(i+1)]*M; 
end

%------ Computation of the eigenvalues
g=simplify(det(M-I))
eqn= g==0
disp('The eigenvalues are given by:')
solve(eqn , lambda) % Solve the equation to find the eigenvalues

disp('The Matrix M with lambda=lambda_1 is given by:')
simplify(subs(M,lambda,2*cos(theta)))

disp('The Matrix M with lambda=lambda_2 has determinant equal to:')
det(subs(M,lambda,1/(cos(theta) - cos(theta)^3)))

disp('The Matrix M with lambda=lambda_2 is equal to:')
simplify(subs(M,lambda,1/(cos(theta) - cos(theta)^3)))

%------End of the Code
 \end{verbatim} 
 
 Here now is the results given by the above MATLAB code.
 
   \begin{verbatim}
   
   >> polygone_Trapeze_lambda_1
 
g =
 
(4*lambda*(lambda - 2*cos(theta))^2*(lambda*cos(theta)^3 
- lambda*cos(theta) + 1))/cos(theta)^3
 
 
eqn =
 
(4*lambda*(lambda - 2*cos(theta))^2*(lambda*cos(theta)^3 
- lambda*cos(theta) + 1))/cos(theta)^3 == 0
 
The eigenvalues are given by:
 
ans =
 
                             0
 1/(cos(theta) - cos(theta)^3)
                  2*cos(theta)
                  2*cos(theta)
 
The Matrix M with lambda=lambda_1 is given by:
 
ans =
 
[ 1, 0]
[ 0, 1]
 
The Matrix M with lambda=lambda_2 has determinant equal to:
 
ans =
 
1
 
The Matrix M with lambda=lambda_2 is equal to:
 
ans =
 
[(sin(theta)^4-sin(theta)^2+sin(theta)^6+1)/(sin(theta)^2-1)^3,
(cos(4*theta)/4+3/4)/(sin(theta)^2-1)^3]
[-(cos(4*theta)+3)/(sin(theta)^2-1)^3,
(7*sin(theta)^2-7*sin(theta)^4+sin(theta)^6-3)/(sin(theta)^2-1)^3]
 
  \end{verbatim} 
  
\bibliography{biblio}
\bibliographystyle{plain}

\end{document}